\theoremstyle{plain}
\newtheorem{definition}{Definition}[section]
\newtheorem{thm}{Theorem}[section]
\newtheorem{theorem}{Theorem}[section]
\newtheorem{lemma}[thm]{Lemma}
\newtheorem{proposition}[thm]{Proposition}
\theoremstyle{definition}
\theoremstyle{remark}                  
\DeclareMathOperator*{\essinf}{ess\,inf}
\definecolor{darkgreen}{rgb}{0,0.4,0}
\newcommand{\R}{\mathbb{R}}
\DeclareMathOperator*{\esssup}{ess\,sup}
\numberwithin{equation}{section}
\def \l {\left(}
\def\r {\right)}
\def\XXint#1#2#3{{\setbox0=\hbox{$#1{#2#3}{\int}$ }
\vcenter{\hbox{$#2#3$ }}\kern-.6\wd0}}
\title[Diffusion-Aggregation Equations]{On a Class of Diffusion-Aggregation Equations}
\author[Yuming Paul Zhang]{\bfseries Yuming Paul Zhang}
\address{
Department of Mathematics \\ 
University of California   \\ 
Los Angeles\\
USA}
\email{yzhangpaul@math.ucla.edu}
\begin{document}

\vspace{18mm} \setcounter{page}{1} \thispagestyle{empty}

\begin{abstract}

{We investigate the diffusion-aggregation equations with degenerate diffusion $\Delta u^m$ and singular interaction kernel $\mathcal{K}_s = (-\Delta)^{-s}$ with $s\in(0,\frac{d}{2})$. We analyze the regime 
where the diffusive forces are stronger than the aggregation forces. In such regime, we show existence, uniform boundedness and H\"{o}lder regularity of solutions in the case that either $s>\frac{1}{2}$ or $m<2$. Uniqueness is proved for kernels with $s>1$.}

\end{abstract}

\maketitle

\section{Introduction}

We consider the following degenerate diffusion equations with drifts:
\begin{equation}\label{main}
    u_t=\Delta u^m-\nabla\cdot(u \nabla \mathcal{K}_su)\text{ in }\mathbb{R}^d\times[0,\infty),
\end{equation}
with nonnegative initial data $u(x,0)=u_0 \in L^1(\mathbb{R}^d) \cap L^\infty(\mathbb{R}^d)$, where the degeneracy arises due to the range of $m$, $m>1$. 
The nonlocal drift is of the form
\begin{equation}
\label{eqn:K}
\mathcal{K}_su=c K_s*u   \quad\text{ where } s\in(0,\frac{d}{2}),\, K_s(z)=|z|^{-d+2s}.\\ 
\end{equation}
When $d\geq 3$, we can write $\mathcal{K}_su=(-\Delta)^{-s}u$ for some $c=c(d,s)>0$ 
which is a typical representation of the aggregating effect between density particles, with smaller $s$ representing stronger aggregation at near-distances and therefore more singular. For larger $s$, we consider stronger force at long-distances. In dimension two, the kernel of $(-\Delta)^{-s}$ is of a different form, for simplicity we restrict to $d\geq 3$.

\medskip

The model arises from the macroscopic  description of cell motility due to cell adhesion and chemotaxis phenomena, see \cite{carrillo2017ground,BCM}. The degenerate diffusion models the repulsion between cells to take over-crowding effects into consideration \cite{jacob60} and it is also widely seen in many physical applications, including fluids in porous medium. The homogeneous singular kernel models the attractive interactions between cells. The competition between non-local aggregation and diffusion is one of the core of subject of diffusion-aggregation equations. 

\medskip

To find the balance of diffusion and concentration effects, we use a scaling argument, also see \cite{critical10,critical}. Define \begin{equation}\label{def:ur}
    u_r(x,t):=r^d u(rx,r^{d(m-1)+2}t),\end{equation} 
then formally $(-\Delta)^{-s}u_r=r^{d-2s}(-\Delta)^{-s}u$. It is straightforward to check
\[\partial_t u_r=\Delta u_r^m-r^{2d-dm-2s}\nabla\cdot(u_r \nabla\cdot\mathcal{K}_su_r).\]
So $m=2-2s/d$ leads to a compensation effect between diffusion and aggregation. We call the range $m>2-{2s}/{d}$ {\it subcritical} where the diffusion dominates over the aggregation.  


\medskip

 When $s=1$, $\mathcal{K}_1$ represents the {\it Newtonian potential} and the equation \eqref{main} is the degenerate Patlak-Keller-Segel equation. In the range $m>2-2/d$, the well-posedness, boundedness and continuity regularity properties of solutions have been established, see \cite{bedrossian2011,chayes}. When $m=2-2/d$, it has been shown in \cite{calvez2017equilibria,critical14,critical} that there is a critical value of the mass and the behaviour of the solutions is determined by the initial mass. If the initial mass is large, solutions may blow up in finite time. If $m<2-2/d$, the aggregation dominates and the problem is called \textit{supercritical}, see \cite{bedrossian11,critical,bedrossian2011}. Again in this regime, we have finite time blow-up of solutions.

\medskip

In this paper, we consider the natural extension of the Newtonian potential with more near-range singularity, i.e. $\mathcal{K}_s=(-\Delta)^{-s}$ if $0<s<1$ and with more long-range singularity if $s>1$ (see \eqref{def:singular}). For this kernel, to the best of our knowledge,  only stationary solutions  have been analyzed before in \cite{carrillo2017ground}. It has been shown there that stationary solutions are radially symmetric decreasing with compact supports and enjoy certain regularity properties in most of the subcritical regime.   Our goal here is to initiate investigating the dynamic equation \eqref{main}, starting with its well-posedness and regularity properties. Many questions stay open as we discuss below.


\bigskip

\begin{flushleft}
{\bf Summary of our result.}
\end{flushleft}


\begin{theorem}[Existence and Boundedness Regularity]
Suppose the $d\geq 3$, $m,s$ are in the subcritical range and either $s>\frac{1}{2}$ or $m<2$. Let $u_0\in L^1(\R^d)\cap L^{\infty}(\R^d)$ be non-negative.  Then there exists a non-negative weak solution $u$ to \eqref{main} with mass preserved and $u$ is uniformly bounded for all $t\in [0,\infty)$. The bound only depends on $s,m,d, \|u_0\|_1$ and $\|u_0\|_\infty$.
\end{theorem}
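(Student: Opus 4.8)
The plan is to construct solutions by a vanishing-viscosity / regularized approximation scheme and then pass to the limit, with the crucial a priori estimate being a uniform $L^\infty$ bound obtained via an energy (or De Giorgi-type $L^p$ iteration) argument that exploits the subcritical scaling $m>2-2s/d$. First I would regularize the problem: replace the degenerate diffusion $\Delta u^m$ by $\Delta(u+\varepsilon)^m$ (or add $\varepsilon\Delta u$), mollify the initial data, and mollify the kernel near the origin, say $K_s^\delta = K_s * \rho_\delta$, so that the drift velocity field $\nabla\mathcal{K}_s^\delta u^\varepsilon$ is smooth. For the regularized problem, standard parabolic theory gives a unique smooth nonnegative solution $u^\varepsilon$ on a time interval, with mass conserved since the equation is in divergence form. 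The maximum principle and comparison are available at this regularized level. The work is then to derive estimates uniform in $\varepsilon,\delta$.

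The heart of the matter is the uniform-in-time $L^\infty$ bound. I would test the equation against $(u^\varepsilon)^{p-1}$ for $p\ge 2$ (or against $((u^\varepsilon-k)_+)^{p-1}$ for the truncated version), integrate by parts, and obtain
\begin{equation}
\frac{1}{p}\frac{d}{dt}\int (u^\varepsilon)^p \,dx = -\frac{4m(p-1)}{(m+p-1)^2}\int \bigl|\nabla (u^\varepsilon)^{(m+p-1)/2}\bigr|^2 dx + (p-1)\int (u^\varepsilon)^p \nabla\mathcal{K}_s u^\varepsilon\cdot\frac{\nabla u^\varepsilon}{u^\varepsilon}\,dx,
\end{equation}
and after a further integration by parts the drift term becomes a multiple of $\int (u^\varepsilon)^{p+1}\,(-\Delta)^{1-s}u^\varepsilon\,dx$ (interpreting $\Delta\mathcal{K}_s u = -(-\Delta)^{1-s}u$). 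The goal is to control this nonlinear term by the good dissipation term using the Hardy--Littlewood--Sobolev and Gagliardo--Nirenberg--Sobolev inequalities: one writes $(-\Delta)^{1-s}u^\varepsilon$ in terms of a Riesz potential when $s<1$, or absorbs derivatives when $s>1$, and then the subcritical condition $m>2-2s/d$ is exactly what makes the power of $\|u^\varepsilon\|_p$ on the aggregation side strictly smaller than the power coming from the diffusion side, so that a differential inequality of the form $\frac{d}{dt}\|u^\varepsilon\|_p^p \le C_p - c_p \|u^\varepsilon\|_p^{\theta_p}$ holds with $\theta_p>p$. This yields a bound $\|u^\varepsilon(\cdot,t)\|_p \le C(p)$ independent of $t$; the dichotomy "$s>\frac12$ or $m<2$" enters in making the interpolation exponents admissible (controlling which Sobolev embedding and which Gagliardo--Nirenberg exponent is available). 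Tracking the constants through the Moser iteration $p\to\infty$, using $\|u_0\|_1$ and $\|u_0\|_\infty$ as the base of the iteration, produces the claimed uniform $L^\infty$ bound depending only on $s,m,d,\|u_0\|_1,\|u_0\|_\infty$.

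With the uniform $L^p$ and $L^\infty$ bounds in hand, I would extract compactness: the energy identity also gives a uniform bound on $\nabla (u^\varepsilon)^{(m+1)/2}$ in $L^2_{t,x}$ (hence on $\nabla (u^\varepsilon)^m$ via the $L^\infty$ bound), and the equation then controls $\partial_t u^\varepsilon$ in a negative Sobolev space; an Aubin--Lions argument yields strong $L^1_{loc}$ (indeed $L^p_{loc}$) convergence of a subsequence $u^\varepsilon \to u$. The mollified drift $\nabla\mathcal{K}_s^\delta u^\varepsilon$ converges to $\nabla\mathcal{K}_s u$ because $\mathcal{K}_s$ maps the relevant $L^p$ spaces (by HLS) into spaces where $\nabla\mathcal{K}_s$ is continuous, using the $L^1\cap L^\infty$ control to handle both the singular and tail parts of the kernel; nonnegativity and mass conservation pass to the limit. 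One then checks $u$ is a weak solution in the sense to be specified. The main obstacle I anticipate is precisely closing the $L^p$ differential inequality uniformly in $p$ with the nonlocal, possibly order-$(1-s)$-differential, drift term — in particular handling the case $s>1$ where $(-\Delta)^{1-s}$ is a genuine fractional derivative rather than a smoothing operator, and ensuring the interpolation constants are controlled well enough through the Moser scheme to get the $t$-uniform $L^\infty$ bound rather than one that degenerates as $t\to\infty$. A secondary technical point is justifying the integrations by parts and the identity $\Delta\mathcal{K}_s u = -(-\Delta)^{1-s}u$ at the regularized level, which is why the kernel mollification $\delta$ is kept until the very end.
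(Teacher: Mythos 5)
Your overall strategy (regularize the diffusion and the kernel, run an $L^p$-to-$L^\infty$ iteration on the approximating solutions, then pass to the limit by compactness) is exactly the route the paper takes. The core idea that testing against $u^{p-1}$, integrating the drift by parts to expose $(-\Delta)^{1-s}u$, and using Gagliardo--Nirenberg together with the subcritical condition $m>2-2s/d$ to absorb the aggregation term into the dissipation is also correct. But there are two concrete gaps where your plan would not close as written.

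First, you have the roles of $s<1$ and $s>1$ reversed, and this hides the real difficulty in the $s>1$ regime. For $s<1$ the operator $(-\Delta)^{1-s}$ is a genuine positive-order fractional derivative (order $2(1-s)\in(0,2)$), and the fractional Gagliardo--Nirenberg and the bilinear form identity for $\mathcal{B}_r$ are the tools that control it; for $s>1$ it is $(-\Delta)^{1-s}$ that is the Riesz-type smoothing operator. The obstruction at $s>1$ is not that one must ``absorb derivatives'' but that the kernel $|z|^{-d-2+2s}$ is no longer summable at infinity for $s>1$, so the fractional-Laplacian machinery does not apply and one is forced instead to split the kernel into near and far pieces and use Young's convolution inequality with carefully chosen exponents (this is where the paper introduces $s'\in(1,s)$ and a three-way case split depending on the relative size of $d/(2s'-2)$, $l$, and $n$). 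Your proposal does not anticipate that a single interpolation scheme cannot cover $s\in(1/2,1]$, $m\geq 2$, and $s>1$ simultaneously, which is why the paper proves the $L^\infty$ bound in four structurally different arguments (Theorems \ref{propm1}, \ref{propm2}, \ref{thm:s1/2}, \ref{thm:large s}). Relatedly, the iterative inequality is of the coupled form $\frac{d}{dt}A_{k+1}+cA_{k+1}\le C^k+C^kA_k^{2+O(2^{-k})}$ along a specific sequence $n_{k+1}=2n_k+1-m$, not the self-contained $\frac{d}{dt}\|u\|_p^p\le C_p-c_p\|u\|_p^{\theta_p}$; the refined coupling is needed to keep the constants from blowing up through the iteration.

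Second, your limit argument for the drift does not go through in the regime $s\le\frac12$. You propose to pass $\nabla\mathcal{K}^\delta_s u^\varepsilon\to\nabla\mathcal{K}_s u$ ``using the $L^1\cap L^\infty$ control''; but for $s\le\frac12$ the kernel $\nabla K_s$ scales like $|z|^{-d-1+2s}$ which is not locally integrable, so $\nabla K_s* u$ is not even defined for $u\in L^1\cap L^\infty$. The missing ingredient (Lemma \ref{lem:priorgrad} in the paper) is the uniform estimate $\nabla u_\varepsilon\in L^2_{loc}(\,[0,\infty),L^2(\mathbb{R}^d))$, obtainable only under the additional hypothesis $m<2$, which gives $\nabla(-\Delta)^{-s}u\in L^2_{loc}L^2$ and makes the weak limit of the drift term meaningful. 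Without this, the identification of the nonlinearity fails for $s\le 1/2$, and it is precisely this estimate that explains why the theorem's hypothesis is ``$s>\frac12$ or $m<2$'' and why the case $\{s\le\frac12, m\ge 2\}$ is left open.
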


We approach the problem by two approximations: regularization of the gradient of the kernel and elimination of the degeneracy, see \eqref{eqn:approx}. The key as well as the hard step is to show a prior boundedness estimates of solutions.
We will firstly prove uniform $L^p$-regularity properties (for some $p\to \infty$) of solutions to the approximate problems  in the subcritical regime. This can be seen as an variate result as compared to \cite{winkler1,winkler2,bedrossian2011} where Keller-Segel systems or equations are considered, see Theorems \ref{propm1}-\ref{thm:large s}. We are going to use Sobolev  inequalities, properties of fractional Laplacian and the equation to show some differential iterative inequalities which will eventually give us a uniform in time $L^\infty$ bound. The idea of the proof is to control the aggregation term by the degenerate diffusion. Very importantly in each estimate, we should not break the scaling \eqref{def:ur} and this turns out to be a useful hint for us, for example the choice of exponents in inequalities, see \eqref{def:pq}. And the condition $m>2-\frac{2s}{d}$ is essential in the proof.

In the subcritical range with $1/2<s\leq 1$, the uniform bounds are obtained separately when $2-2s/d<m<2$  and  $m\geq 2$, and only for the former range of $m$ when  $s\leq 1/2$. The proofs for the three cases are different. $s=\frac{1}{2}$ is critical, because  $|\nabla K_s|$ is only locally integrable when $s> \frac{1}{2}$. Boundedness of solutions in the case $\{m\geq 2, s\leq 1/2\}$ is unknown, though we believe it is true. While likely a technical challenge, extending the results seems to require some different ideas.  

When $s>1$, again the regimes $2-2s/d<m<2$  and  $m\geq 2$ are treated separately. However the proofs are even more different. In this regime the tool is limited, for example we can not use the fractional differentiation, instead we use Young's convolution inequality to treat the singular convolution integral. Technically we are required to use three arguments for different parts of the iterative steps, see Theorem \ref{thm:large s}.  

 Let us mention that the difficulty for $m>2$ arises as well in \cite{carrillo2017ground} where stationary states of \eqref{main} are studied. More precisely in Theorem 1 \cite{carrillo2017ground},  stationary solutions  are shown to be in $ W^{1,\infty}(\mathbb{R}^d)$ only when $2-2s/d < m\leq 2$.

\medskip

With aforementioned a priori bounds,  we obtain existence and bounds for the solution to the original problem \eqref{main} by compactness, see Theorem \ref{thmexistence}, \ref{thm:existence msmaller}. The hard part is to justify $u\nabla(-\Delta)^{-s}u=u\nabla K_s*u$ when $s\leq 1/2$, where $u$ is the weak solution of \eqref{main}, because in such cases $\nabla K_s*u$ is not well-defined for $u\in L^1\cap L^\infty$. To overcome this difficulty, the following estimate can be proved under the condition $m<2$,
\[\nabla u \in L^2_{loc}([0,\infty),L^2(\mathbb{R}^d) ).\]
Using this, we will show in Lemma \ref{lem:priorgrad} that 
\[\nabla(-\Delta)^{-s}u\in L^2_{loc}([0,\infty),L^2(\mathbb{R}^d)) \quad {\hbox{if }m<2}. \]


%


\medskip

Next let us state the uniqueness result.

\begin{theorem}[Uniqueness]
Suppose $m,s$ are in the subcritical range and $s>1$, $u_0\in L^1(\R^d)\cap L^{\infty}(\R^d)$. Then the weak solution to \eqref{main} with initial data $u_0$ is unique. 

\end{theorem}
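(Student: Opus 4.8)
The plan is to prove uniqueness via an $L^2$-type (or $H^{-1}$-type) stability estimate, exploiting the fact that for $s>1$ the kernel $K_s$ and its gradient are smooth (indeed $\nabla K_s * v \in W^{1,\infty}$ whenever $v\in L^1\cap L^\infty$, since $|\nabla K_s(z)|=|{-d+2s}|\,|z|^{-d+2s-1}$ is integrable near the origin and decays at infinity), so the aggregation drift is a bounded Lipschitz vector field whenever its argument is bounded. Let $u_1,u_2$ be two bounded weak solutions with the same initial data $u_0$, both enjoying the mass conservation and the uniform $L^\infty$ bound from the Existence Theorem; write $w=u_1-u_2$. The natural quantity to control is $\tfrac{d}{dt}\tfrac12\|w(t)\|_{H^{-1}}^2$ (or, if one prefers to stay in $L^2$, one first needs $w\in L^2$, which follows since $u_i\in L^1\cap L^\infty$). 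Using $(-\Delta)^{-1}w$ as a test function, one gets
\[
\frac{d}{dt}\,\frac12\|w\|_{\dot H^{-1}}^2
= -\int \big(u_1^m-u_2^m\big)\,w\,dx
+\int \nabla(-\Delta)^{-1}w\cdot\big(u_1\nabla\mathcal K_s u_1-u_2\nabla\mathcal K_s u_2\big)\,dx .
\]

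The first term on the right is the good (dissipative) term: since $r\mapsto r^m$ is monotone, $(u_1^m-u_2^m)w\ge 0$ pointwise, so $-\int(u_1^m-u_2^m)w\,dx\le 0$ and it may simply be discarded (it is where the degenerate diffusion helps rather than hurts). For the drift term, split the difference as
\[
u_1\nabla\mathcal K_s u_1-u_2\nabla\mathcal K_s u_2
= w\,\nabla\mathcal K_s u_1 + u_2\,\nabla\mathcal K_s w .
\]
For the first piece, $\nabla\mathcal K_s u_1$ is a bounded vector field (by the $W^{1,\infty}$ bound above, with norm controlled by $\|u_0\|_1,\|u_0\|_\infty,s,d$), so $\big|\int \nabla(-\Delta)^{-1}w\cdot w\,\nabla\mathcal K_s u_1\big|\le C\|w\|_{\dot H^{-1}}\|w\|_{2}$; one then interpolates, or uses that $\nabla\mathcal K_s u_1$ and its derivative are bounded so that this is in fact controlled by $C\|w\|_{\dot H^{-1}}^2$ after an integration by parts moving a derivative off $w$ onto the smooth factor. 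For the second piece, $u_2$ is bounded and $\nabla\mathcal K_s w=\nabla K_s*w$; here I would integrate by parts to write this as a term involving $\nabla(-\Delta)^{-1}w$ tested against $u_2$ and the operator $\nabla\mathcal K_s=\nabla(-\Delta)^{-s}$, and use that for $s>1$ the composition $\nabla(-\Delta)^{-1}\nabla(-\Delta)^{-s}$ is an operator of order $-2s$, hence bounded on $L^2$ and in fact from $\dot H^{-1}\to \dot H^{-1}$ with room to spare, so this piece is also $\le C\|w\|_{\dot H^{-1}}^2$ (with $C$ depending on $\|u_2\|_\infty$). Altogether $\tfrac{d}{dt}\|w\|_{\dot H^{-1}}^2\le C\|w\|_{\dot H^{-1}}^2$, and Gr\"onwall with $w(0)=0$ gives $w\equiv 0$.

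The main obstacle I expect is the rigorous justification of the computation rather than its formal shape: one must check that the weak solutions produced by the approximation scheme are regular enough to use $(-\Delta)^{-1}w$ as a test function and to perform the integrations by parts — in particular that $\|w(t)\|_{\dot H^{-1}}$ is finite and absolutely continuous in $t$, which requires $w\in L^2_{loc}(L^2)$ or at least $w\in L^1\cap L^2$ uniformly, together with an approximation/density argument (regularize in space, pass to the limit) to legitimize the identity for $\tfrac{d}{dt}\|w\|_{\dot H^{-1}}^2$. A secondary technical point is confirming the mapping properties of $\nabla(-\Delta)^{-1}$ and $\nabla\mathcal K_s$ on the relevant (homogeneous) Sobolev spaces on $\mathbb R^d$, and that the constant $C$ in the Gr\"onwall inequality depends only on the a priori bounds $\|u_0\|_1,\|u_0\|_\infty,s,m,d$ (uniform on $[0,\infty)$), so that uniqueness holds on all of $[0,\infty)$ and not just locally in time. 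The restriction $s>1$ is exactly what makes $\nabla K_s$ integrable at the origin and the drift Lipschitz, which is why the argument does not extend to $s\le 1$.
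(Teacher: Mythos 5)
Your overall framework is exactly the paper's: Gronwall in $\dot H^{-1}$, take $\phi:=(-\Delta)^{-1}w$ with $w=u_1-u_2$, drop the diffusion term via monotonicity of $r\mapsto r^m$, and split
$u_1\nabla\mathcal K_s u_1-u_2\nabla\mathcal K_s u_2=w\,\nabla\mathcal K_s u_1+u_2\,\nabla\mathcal K_s w$ (the paper's $X_2+X_3$), following \cite{bertozzi2009,bedrossian2011}. Your treatment of the first piece also matches: since $w=\Delta\phi$, one integrates by parts to land on $\int \nabla\phi\cdot D^2\mathcal K_s u_1\,\nabla\phi\,dx$, and $D^2\mathcal K_s u_1\in L^\infty$ precisely because $|D^2K_s(z)|\sim |z|^{-(d+2-2s)}$ is locally integrable for $s>1$ and bounded for $|z|\ge 1$, while $u_1(\cdot,t)$ is bounded in $L^1\cap L^\infty$.

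The gap is in the second piece. You assert that $T:=\nabla(-\Delta)^{-1}\nabla(-\Delta)^{-s}$ is ``an operator of order $-2s$, hence bounded on $L^2$ and in fact from $\dot H^{-1}\to\dot H^{-1}$ with room to spare.'' Neither statement is correct: the symbol of $T$ is $-|\xi|^{-2s}$, which blows up as $\xi\to 0$, so $T$ is not bounded on $L^2(\R^d)$; and $T:\dot H^{-1}\to\dot H^{-1+2s}$, but $\dot H^{-1+2s}\not\subset\dot H^{-1}$ since homogeneous Sobolev spaces of different orders are not nested. Concretely, if one sets $u_2\equiv 1$ the term you need to bound becomes $\int|\xi|^{-2s}|\hat w|^2\,d\xi=\|w\|_{\dot H^{-s}}^2$, which for $s>1$ is not controlled by $\|w\|_{\dot H^{-1}}^2$ at low frequencies, so your estimate does not close the Gronwall inequality. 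What the paper does instead is write $\nabla\mathcal K_s w= D^2K_s*\nabla\phi$ and decompose $D^2K_s=A_1+A_2$ with the near part $A_2=\chi_{\{|z|<1\}}D^2K_s\in L^1$ (this is where $s>1$ enters) and the far part $A_1=\chi_{\{|z|\ge1\}}D^2K_s$ bounded, then applies Young's convolution inequality and the $L^\infty$ bound on $u_2$ to obtain $X_3\le C\|\nabla\phi\|_2^2$. You would need this kernel-side argument, or some genuine replacement for the erroneous multiplier claim, to finish; the rest of your sketch (including the caveats about justifying the formal computation) is consistent with the paper.
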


Uniqueness result is rather limited, it is only shown here for $s>1$ in the frame of $\dot{H}^{-1}(\R^d)$, following the approach of \cite{bertozzi2009,bedrossian2011}  where they consider the case when $s=1$. 

\medskip

Now we look at the regularity of solutions to \eqref{main}.

\begin{theorem}[H\"{o}lder Regularity]\label{thm holder}
Suppose $m,s$ are in the subcritical range and $s\in(\frac{1}{2},\frac{d}{2})$. Let $u(\cdot,t)$ be a weak solution to \eqref{main} with non-negative initial data $u_0\in L^1(\R^d)\cap L^{\infty}(\R^d)$. Then the following holds
\begin{itemize}
\item[(a)] For any $\tau>0$, $u$ is H\"{o}lder continuous in $\mathbb{R}^d\times (\tau,\infty)$.

\medskip

\item[(b)] If $u_0$ is H\"{o}lder continuous in space, then $u$ is H\"{o}lder continuous in $\mathbb{R}^d\times [0,\infty)$.
\end{itemize}
\end{theorem}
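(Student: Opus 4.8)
The plan is to show that once the a priori $L^\infty$ bound of Theorem 1.1 is in hand, the equation \eqref{main} can be viewed as a porous-medium-type equation with a bounded drift, and then invoke the regularity theory for degenerate parabolic equations with lower-order terms. Write \eqref{main} as $u_t = \Delta u^m - \nabla\cdot(u\,b)$ with $b := \nabla\mathcal{K}_s u = \nabla K_s * u$. The first step is to control $b$: since $\nabla K_s(z) = c'|z|^{-d+2s-1}z$ and $s>\tfrac12$ means the exponent $-d+2s-1 > -d-1 + 2s$ is such that $|\nabla K_s|$ is locally integrable (this is precisely the role of the hypothesis $s>\tfrac12$, as noted after Theorem 1.1), splitting the convolution into a near part $|z|\le 1$ and a far part $|z|>1$ and using $u(\cdot,t)\in L^1\cap L^\infty$ uniformly in $t$ (Theorem 1.1) gives a uniform bound $\|b(\cdot,t)\|_{L^\infty(\mathbb{R}^d)} \le C$ for all $t>0$, with $C$ depending only on $s,m,d,\|u_0\|_1,\|u_0\|_\infty$. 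In fact, interpolating the singular kernel against $L^p$ norms one gets that $b\in L^\infty_{loc}((0,\infty); C^\alpha_{loc})$ as well, but only the $L^\infty$ bound is needed for the first regularity step.

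The second step is to apply the DiBenedetto–style intrinsic-scaling theory (or, since $u$ is uniformly bounded away from no upper constraint, the results of DiBenedetto–Urbano–Vespri, or Porzio–Vespri) for quasilinear degenerate equations of porous medium type $u_t = \Delta u^m + \nabla\cdot F$ with $F = -ub \in L^\infty$ and $u\in L^\infty$. These yield local Hölder continuity of $u$ in space-time on any cylinder staying away from $t=0$, with modulus depending only on the $L^\infty$ bounds of $u$ and $b$; this is exactly part (a). For part (b), one runs the same argument but now includes the initial slice: since $b$ is still bounded up to $t=0$ and $u_0$ is assumed Hölder continuous, the boundary-regularity version of the same degenerate-parabolic theory (treating $t=0$ as a lateral/initial boundary with Hölder data) upgrades the estimate to $\mathbb{R}^d\times[0,\infty)$. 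One should also note that the mass is preserved and $u\ge 0$, so there is no issue with the equation changing type beyond the standard porous-medium degeneracy at $\{u=0\}$, which the intrinsic-scaling machinery is built to handle.

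The main obstacle I expect is not the abstract regularity theory but verifying its hypotheses cleanly: one must confirm that the weak solution produced by the compactness argument of Theorems \ref{thmexistence}–\ref{thm:existence msmaller} has enough integrability on $u^m$ (namely $\nabla u^m \in L^2_{loc}$, or equivalently $u^{(m+1)/2}\in L^2_{loc} H^1$) to qualify as a weak solution in the sense required by the degenerate-parabolic regularity literature, and that the drift term $\nabla\cdot(ub)$ with $ub\in L^\infty$ is admissible as a divergence-form perturbation. The hypothesis $s>\tfrac12$ is essential precisely here, because for $s\le\tfrac12$ the drift $b$ fails to be locally integrable against an $L^1\cap L^\infty$ density and the equation no longer fits the bounded-lower-order-term framework — this is why the theorem is stated only for $s\in(\tfrac12,\tfrac d2)$. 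A secondary, more technical point is obtaining the $L^\infty$ (and then $C^\alpha$) bound on $b$ uniformly in $t$ down to $t=0$ for part (b): this follows from the uniform-in-time $L^1\cap L^\infty$ bound on $u$, but must be stated carefully since the bound on $\|u(\cdot,t)\|_\infty$ from Theorem 1.1 already holds on all of $[0,\infty)$, so no extra smallness or smoothing time is required.
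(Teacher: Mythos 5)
Your plan matches the paper's strategy for part (a) essentially verbatim: split the kernel $\nabla K_s$ into near and far parts, use the uniform $L^1\cap L^\infty$ bound on $u$ from Theorem 1.1 to conclude $\|\nabla\mathcal{K}_s u\|_{L^\infty}\le C$ (the paper's display \eqref{bound of V}), rewrite the equation as a porous-medium equation with bounded drift $V$, and invoke interior H\"older regularity for such equations --- the paper cites Theorem~4.1 of \cite{kimpaul} where you point to DiBenedetto/Porzio--Vespri, but these are the same circle of ideas.

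For part (b), however, you write as if a ``boundary-regularity version of the same degenerate-parabolic theory'' with H\"older initial data and a bounded divergence-form drift were available to cite. It is not, at least not in the form the paper needs: the reference \cite{dibenedettobook} (Section~3.11) treats the drift-free case $V=0$, and the interior result of \cite{kimpaul} stops short of $t=0$. The content of the paper's Theorem~\ref{tequal0}(b) is precisely to \emph{build} the up-to-$t=0$ estimate. It does so by reproving the oscillation-reduction scheme (the paper's Claims~1 and~2, paralleling Propositions~4.4 and~4.6 of \cite{kimpaul}) on one-sided parabolic cylinders $Q^0(r,c)$ sitting on the initial slice, where the H\"older continuity of $u_0$ replaces the ``history'' normally available below the cylinder --- concretely, it supplies the inequality $\sup|v(\cdot,0)|\le M^- + w/2$ which drives the measure estimate in Claim~2 --- and then glues the initial estimate to the interior one via an intrinsic rescaling $v(x,t)=r^{-1/(m-1)}u(x_0+rx,t_0+rt)$. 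So your outline is correct but the part-(b) step is a proof you still owe, not a citation; flagging that you need to adapt the intrinsic-scaling scheme to the initial boundary using the modulus of continuity of $u_0$ would close the gap.

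Your remarks on verifying $u^m\in L^2_{loc}\dot H^1$ and the precise role of $s>\tfrac12$ are consistent with the paper and require no change.
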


When  $1/2<s<\frac{d}{2}$, $\nabla \mathcal{K}_su$ is a well-defined and bounded vector field in $\mathbb{R}^d$ for all $u\in L^1(\mathbb{R}^d)\cap L^\infty(\mathbb{R}^d)$ and then the interior H\"{o}lder estimate is a consequence of \cite{kimpaul} where the porous medium equation with locally integrable drift terms is considered. We also study the regularity of solutions on the boundary $t=0$ if the initial data is H\"{o}lder continuous which is given in Theorem \ref{tequal0}.


The regularity result is left open in the regime $s\leq 1/2$ and the main difficulty comes from $\nabla(-\Delta)^{-s}u$. As mentioned before, we can have boundedness of $\nabla(-\Delta)^{-s}u$ in $L^2_{loc}([0,\infty),L^2(\mathbb{R}^d))$ when $m<2$. But this bound is not strong enough to obtain uniform H\"{o}lder estimates, according to \cite{kimpaul,chung2017holder}. We need some more careful analysis which could be employed  in future research.


\medskip
Let us comment that our results and proofs adapt to more general kernels given by $ \mathcal{K}_su=K_s*u$ where $|{K}_s(x,y,t)|$, $|\nabla_x K_s(x,y,t)|$, $|D^2_x{K}_s(x,y,t)|$ share the same singularity as $|x-y|^{-d+2s}$, $|x-y|^{-d-1+2s}$, $|x-y|^{-d-2+2s}$ respectively near $x=y$. Some modifications are needed if we only assume $|K_s(x,y,t)|$, $|\nabla_x K_s(x,y,t)|$, $|D^2_x{K}_s(x,y,t)|$ to be bounded away from $x=y$. 

Lastly let us mention that a lot of open questions remain to be investigated even in the subcritical regime, existence result for $s<1/2$ (and $m>2$), uniqueness result for $s<1$. And there are even more questions in the supercritical regime. Some of these open questions closely related to us will be stated in the outline.

\bigskip

\begin{flushleft}
\textbf{Outline of the paper.}

\end{flushleft}

\medskip

We assume the space dimension $d\geq 3$ for the simplicity of computation, and also assume that  $m,s $ are in the subcritical range in the whole paper. Section 2 contains preliminary definitions and notations. Section 3 deals with  a priori estimates of solutions and the proof is given separately for $\{s\in (1/2,1],m<2\}, \{s>1/2,m\geq 2\}$, $\{s\leq 1/2, m<2\}$ and  $\{s> 1\}$. In section 4 we show the existence of solutions.  Existence and boundedness property of solutions for $\{s<1/2, m\geq 2\}$ stay open at the moment. In section 5, we give a uniqueness result for $s\geq 1$. The uniqueness problem is open for $s<1$. Lastly in section 6,  bounded solutions in the parabolic cylinder are shown to be H\"{o}lder continuous given that $s>1/2$. If given H\"{o}lder continuous initial data, solutions are H\"{o}lder continuous up to $t=0$.   Regularity property of solutions for $s\leq 1/2$ stays open. 

\leavevmode
\medskip

\textbf{Acknowledgements.}
The author would like to thank his advisor Inwon Kim for her guidance and stimulating discussions. The author would also like to thank Franca Hoffmann, Kyungkeun Kang and Monica Visan for helpful discussions and suggestions.

\medskip

\section{Preliminaries and Notations}\label{defassump}

\begin{flushleft}
$\circ$ Let us start with discussing the fractional potential operator $\mathcal{K}_s=(-\Delta)^{-s}$. 
\end{flushleft}
\medskip

We use the notation $-(-\Delta)^r$ with $r\in (0,1]$ for fractional Laplace operator which is defined on the Schwartz class of functions on $\mathbb{R}^d$ by Fourier multiplier with symbol $-|\xi|^{2r}$, see chapter V \cite{stein}. Alternatively, $-(-\Delta)^r$ can also be realized as the following singular integral in the sense of Cauchy principal value, see \cite{ten}. 
\[-(-\Delta)^r u(x) = \lim_{R\to 0^+}\frac{2^{2r}\Gamma(\frac{d+2r}{2})}{\pi^{d/2}|\Gamma(-r)|}\int_{\mathbb{R}^d\backslash B_R(x)}\frac{u(x+y)-u(x)}{|y|^{d+2r}}dy.
\]

We denote the constant before the above integral as $c_{d,r}$. The domain of the operator can be extended naturally to the Sobolev space $W^{2r,2}(\mathbb{R}^d)$. We will write 
\[|\nabla|^{2r}:=(-\Delta)^r.\]

The bilinear form associated to the space $W^{r,2}(\mathbb{R}^d)$ is define to be the following with reference to \cite{ten} and Section 3 \cite{caffarelli2012regularity}. For $v,w\in W^{r,2}(\mathbb{R}^d)$
$$
\mathcal{B}_r(v,w)=c_{d,r}\int \frac{(v(x)-v(y))(w(x)-w(y))}{|x-y|^{d+2r}}dxdy.
$$
Formally \[\mathcal{B}_r(v,w) = \langle (-\Delta)^{r} v, w\rangle_{L^2}:=\int_{\mathbb{R}^d}w(-\Delta)^{r} v\,dx.\]

Using Parseval's identity and definitions, we have for $0<r_1<r$
\[\langle (-\Delta)^{r} v, w\rangle_{L^2}=\langle|\nabla|^{r-r_1} v,|\nabla|^{r_1} w\rangle_{L^2}.\]

\begin{proposition}[Proposition 3.2 \cite{caffarelli2012regularity}]\label{propnonlocal} For every $v,w\in W^{1,2}(\mathbb{R}^d)$, we have\[\mathcal{B}_r(v,w)= C\int \frac{\nabla v(x)\cdot \nabla w(y) }{|x-y|^{d-2+2r}}dxdy.\]\end{proposition}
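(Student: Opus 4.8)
The plan is to verify that both sides of the claimed identity are, up to a dimensional constant, equal to the quadratic form
\[
Q(v,w):=\int_{\mathbb{R}^d}|\xi|^{2r}\,\widehat v(\xi)\,\overline{\widehat w(\xi)}\,d\xi,
\]
which is finite for $v,w\in W^{1,2}(\mathbb{R}^d)$ since $|\xi|^{2r}\le 1+|\xi|^2$ and hence $|Q(v,w)|\le\|v\|_{W^{1,2}}\|w\|_{W^{1,2}}$ by Cauchy--Schwarz. For the left-hand side this is the content of the relations already recorded: $\mathcal{B}_r(v,w)=\langle(-\Delta)^r v,w\rangle_{L^2}$, and Plancherel together with the fact that $(-\Delta)^r$ has Fourier symbol $|\xi|^{2r}$ gives $\mathcal{B}_r(v,w)=C_1\,Q(v,w)$ with $C_1$ depending only on the Fourier normalization. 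The real point is to obtain the same representation for the double integral, and the key observation is that the kernel $|x-y|^{-(d-2+2r)}$ is (a constant multiple of) the Riesz kernel of $(-\Delta)^{-(1-r)}$: because $d\ge 3$ and $r\in(0,1)$, the exponent $\alpha:=2-2r$ lies in $(0,d)$, so $(-\Delta)^{-(1-r)}f=\kappa_{d,r}\,|x|^{-(d-2+2r)}*f$ with $\widehat{(-\Delta)^{-(1-r)}f}(\xi)=|\xi|^{-(2-2r)}\widehat f(\xi)$.

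Granting this, I would first prove the identity on the Schwartz class, where all the manipulations below are rigorous. One route is operator-theoretic: write $(-\Delta)^r=(-\Delta)^{-(1-r)}\circ(-\Delta)$, move $(-\Delta)^{-(1-r)}$ onto $w$ by self-adjointness, integrate by parts, and use that $\nabla$ commutes with $(-\Delta)^{-(1-r)}$, to get
\[
\langle(-\Delta)^r v,w\rangle_{L^2}=\langle -\Delta v,(-\Delta)^{-(1-r)}w\rangle_{L^2}=\langle \nabla v,(-\Delta)^{-(1-r)}\nabla w\rangle_{L^2}=\kappa_{d,r}\int_{\mathbb{R}^d}\nabla v(x)\cdot\Big(\int_{\mathbb{R}^d}\frac{\nabla w(y)}{|x-y|^{d-2+2r}}\,dy\Big)\,dx,
\]
and Fubini identifies the last expression with the double integral. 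Equivalently, and perhaps more transparently, one computes the double integral directly: by Fubini it equals $\kappa_{d,r}\int\nabla v\cdot\big(|\cdot|^{-(d-2+2r)}*\nabla w\big)$, and Parseval with $\widehat{\nabla v}=i\xi\widehat v$, $\widehat{\nabla w}=i\xi\widehat w$ turns it into $\kappa_{d,r}\,C_1'\int|\xi|^{2}\,|\xi|^{-(2-2r)}\,\widehat v\,\overline{\widehat w}\,d\xi=\kappa_{d,r}C_1'\,Q(v,w)$. Comparing with $\mathcal{B}_r(v,w)=C_1 Q(v,w)$ pins down the constant $C$ in terms of $c_{d,r}$, $\kappa_{d,r}$ and the Fourier normalization; its precise value is irrelevant for the statement.

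It then remains to remove the Schwartz hypothesis, and this approximation step — together with the precise sense in which the double integral is to be read — is the only genuinely delicate point, since $|z|^{-(d-2+2r)}$ is locally integrable but not globally integrable on $\mathbb{R}^d$. The left-hand form $\mathcal{B}_r$ extends to a bounded bilinear form on $W^{1,2}\times W^{1,2}$ by the estimate on $Q$ above, and by the computation just made its restriction to Schwartz data agrees, up to the constant $C$, with the double-integral form; so one approximates $v,w$ in $W^{1,2}$ by Schwartz functions and passes to the limit. To make the limit of the double integrals legitimate one splits the kernel as $|z|^{-(d-2+2r)}=K_{\mathrm{loc}}(z)+K_{\infty}(z)$ with $K_{\mathrm{loc}}=|z|^{-(d-2+2r)}\mathbf 1_{\{|z|\le 1\}}\in L^1(\mathbb{R}^d)$: the $K_{\mathrm{loc}}$-part is controlled by Young's inequality, $\|\,|\nabla v|*K_{\mathrm{loc}}\,\|_{L^2}\|\nabla w\|_{L^2}\le\|K_{\mathrm{loc}}\|_{L^1}\|\nabla v\|_{L^2}\|\nabla w\|_{L^2}$, while the tail $K_\infty$ is handled by a Hardy--Littlewood--Sobolev / Young estimate (which is where a restriction such as $r>\tfrac12$, or an interpretation of the double integral as the limit over Schwartz approximants, enters). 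I expect this bookkeeping around the non-integrable tail of the Riesz kernel to be the main obstacle; the rest is Plancherel and one integration by parts.
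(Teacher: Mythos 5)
The paper does not prove this proposition; it quotes it verbatim as Proposition~3.2 of Caffarelli--Soria--V\'azquez \cite{caffarelli2012regularity}, so there is no in-paper proof to compare against. That said, your argument is essentially the standard one, and I believe it is also the one used in the cited source: both sides are matched against the Fourier-side form $Q(v,w)=\int|\xi|^{2r}\widehat v\,\overline{\widehat w}\,d\xi$ via Plancherel, the left side because $(-\Delta)^r$ has symbol $|\xi|^{2r}$ and the right side because $|z|^{-(d-2+2r)}$ is, up to a constant, the Riesz kernel of $(-\Delta)^{-(1-r)}$ with symbol $|\xi|^{-(2-2r)}$, and then $|\xi|^2\cdot|\xi|^{-(2-2r)}=|\xi|^{2r}$. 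Your Schwartz-class computation of this is correct.

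Where I would push back is on the final approximation paragraph. You flag the non-integrable tail of $|z|^{-(d-2+2r)}$ and propose to handle it by splitting the kernel and invoking Young on the local part and Hardy--Littlewood--Sobolev on the tail, remarking that this ``is where a restriction such as $r>\tfrac12$ \dots enters.'' That restriction would be spurious: the proposition is stated for all $r\in(0,1)$, and no such hypothesis should appear. The cleaner and in fact standard resolution is not to try to make the physical-space double integral absolutely convergent at all, but to read the right-hand side as the Fourier-side pairing
\[
C\int\nabla v(x)\cdot\pth{I_{2-2r}\nabla w}(x)\,dx \;=\; C'\!\int_{\mathbb{R}^d}|\xi|^{2r}\,\widehat v(\xi)\,\overline{\widehat w(\xi)}\,d\xi,
\]
which is absolutely convergent for all $v,w\in W^{1,2}$ by Cauchy--Schwarz, exactly as you observed in your opening paragraph for the left-hand form. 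Both bilinear forms are then continuous on $W^{1,2}\times W^{1,2}$ and agree on the Schwartz class, so they agree everywhere by density, with no kernel decomposition, no HLS, and no restriction on $r\in(0,1)$. In short: the content of your proof is right, but the closing ``bookkeeping'' is unnecessary and, taken at face value, would weaken the statement by introducing a condition on $r$ that the proposition does not and should not carry.
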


\medskip

The inverse operator is denoted by $-(-\Delta)^{-s}$ which can be realized as the convolution of a function with the Riesz potential 
\begin{equation}\label{def:singular}
(-\Delta)^{-s} u(x):=\int_{\mathbb{R}^d}K_s(x,y)u(y)dy;
\end{equation}
\[\text{ and }\, K_s(x,y):=\frac{2^{2s}\Gamma(\frac{d-2s}{2})}{{\pi^{d/2}}{\Gamma(s)}}|x-y|^{-d+2s}.\]
Here $s$ can be any number in $(0,\frac{d}{2})$ and $u$ is a function integrable enough for \eqref{def:singular} to make sense. We refer readers to \cite{ten,Caffarelli, Vazquez30} for more details.


When $s>\frac{1}{2}$, $\nabla \mathcal{K}_s u$ is well defined for $u\in L^1(\mathbb{R}^d)\cap L^\infty(\mathbb{R}^d)$. When $s<\frac{1}{2}$, if we further assume that $u$ is $\gamma$-H\"{o}lder continuous with $\gamma\geq 1-2s$, then $\nabla \mathcal{K}_s u$ is defined via a Cauchy principal value
\[\nabla \mathcal{K}_s u(x):=\int_{\mathbb{R}^d}\nabla_x K_s(x,y)(u(y)-u(x))dy.\]


\medskip


\begin{flushleft}
$\circ$ We now give the following notion of weak solutions to \eqref{main}. The notion is similar to the one in \cite{bertozzi2009,Caffarelli}.
\medskip
\end{flushleft}

\begin{definition}\label{def1.1}
Let $u_0(x)\in L^\infty(\mathbb{R}^d)\cap L^1(\mathbb{R}^d)$ be non-negative and $T\in (0,\infty]$. We say that a non-negative function $u:\mathbb{R}^d\times [0,T]\to[0,\infty)$ is a weak solution to \eqref{main} in time $[0,T]$ with initial data $u_0$ if
\begin{equation}\label{definitionsol}
\begin{aligned}
&u\in C([0,T],L^1(\mathbb{R}^d))\cap L^\infty(\mathbb{R}^d\times[0,T]),\quad u^m\in L^2(0,T, \dot{H}^1(\mathbb{R}^d)),\\
&\quad\text{ and } \quad u\nabla\mathcal{K}_su\in L^1(\mathbb{R}^d\times [0,T])
\end{aligned}
 \end{equation}
and for all test function $\phi\in C^\infty_c( \mathbb{R}^d\times [0,T))$, 
\begin{equation}\label{defofmain:test}\iint_{\mathbb{R}^d\times [0,T]} u\phi_tdxdt=\int_{\mathbb{R}^d} u_0(x)\phi(0,x)dx+\iint_{\mathbb{R}^d\times [0,T]}(\nabla u^m+u\nabla\mathcal{K}_su)\nabla\phi\, dxdt.\end{equation}
\end{definition}



\bigskip

\begin{flushleft}
$\circ$ Next we collect some known results which will be used later.
\end{flushleft}


\begin{lemma}\label{lem:Young}[Young's convolution inequality]
For all $p,q,r\in [1,\infty]$ satisfying $1+1/q=1/p+1/r$, we have for all functions $f\in L^{p}(\mathbb{R}^d), g\in L^r(\mathbb{R}^d)$
\[\|f*g\|_{L^q}\leq\|f\|_{L^{p}}\|g\|_{L^r}.\]
\end{lemma}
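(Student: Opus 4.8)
The plan is to work first in the interior range $1<p,r,q<\infty$, deducing the inequality from the symmetric trilinear estimate
\[
\Big|\iint_{\R^d\times\R^d}F(x)\,G(x-y)\,H(y)\,dx\,dy\Big|\le\|F\|_{L^a}\|G\|_{L^b}\|H\|_{L^c},\qquad \tfrac1a+\tfrac1b+\tfrac1c=2 ,
\]
and handling the endpoint triples separately at the end. First I would record the reduction: given $f\in L^p$, $g\in L^r$ with $1+\tfrac1q=\tfrac1p+\tfrac1r$, the exponents $(a,b,c)=(p,r,q')$ satisfy $\tfrac1a+\tfrac1b+\tfrac1c=\tfrac1p+\tfrac1r+(1-\tfrac1q)=2$, so by duality in $L^q$ together with Fubini's theorem,
\[
\|f*g\|_{L^q}=\sup_{\|h\|_{L^{q'}}\le1}\Big|\iint f(x-y)g(y)h(x)\,dx\,dy\Big|\le\|f\|_{L^p}\|g\|_{L^r},
\]
which is the claimed bound. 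Measurability of $(x,y)\mapsto f(x-y)g(y)$ and the legitimacy of Fubini are obtained by a routine reduction to nonnegative simple functions and monotone convergence.

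For the trilinear estimate I would assume $F,G,H\ge0$ and use the elementary pointwise factorization
\[
F(x)G(x-y)H(y)=\big[F(x)^aG(x-y)^b\big]^{1/c'}\big[G(x-y)^bH(y)^c\big]^{1/a'}\big[F(x)^aH(y)^c\big]^{1/b'},
\]
where $a',b',c'$ are the conjugate exponents; the relation $\tfrac1a+\tfrac1b+\tfrac1c=2$ is precisely what forces the total exponent of each of $F$, $G$, $H$ on the right to equal $1$. Since $\tfrac1{a'}+\tfrac1{b'}+\tfrac1{c'}=3-(\tfrac1a+\tfrac1b+\tfrac1c)=1$, H\"older's inequality on $\R^d\times\R^d$ with exponents $(c',a',b')$ bounds the double integral by a product of three factors, each of which decouples after the change of variables $z=x-y$, e.g. $\iint F(x)^aG(x-y)^b\,dx\,dy=\|F\|_{L^a}^a\|G\|_{L^b}^b$. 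Multiplying out and using $a(\tfrac1{c'}+\tfrac1{b'})=a(2-\tfrac1b-\tfrac1c)=1$ and its cyclic analogues, the powers recombine to exactly $\|F\|_{L^a}\|G\|_{L^b}\|H\|_{L^c}$.

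It remains to treat the endpoints, where $a'$, $b'$ or $c'$ degenerates. If $p=1$ then necessarily $q=r$, and the bound $\|f*g\|_{L^r}\le\|f\|_{L^1}\|g\|_{L^r}$ is Minkowski's integral inequality; the case $r=1$ is symmetric. If $q=\infty$ then $\tfrac1p+\tfrac1r=1$ and $\|f*g\|_{L^\infty}\le\|f\|_{L^p}\|g\|_{L^r}$ is plain H\"older applied to $y\mapsto f(x-y)g(y)$. Since $1+\tfrac1q=\tfrac1p+\tfrac1r$ forces $1\le p,r\le q\le\infty$, the interior range together with these cases exhausts all admissible triples.

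I expect the only real friction to be bookkeeping: verifying the exponent arithmetic in the factorization of the second step and peeling off the degenerate conjugates at the endpoints. An alternative that avoids the factorization entirely is to fix $g\in L^r$ and interpolate, via the Riesz--Thorin theorem, between the two endpoint mappings $(p,q)=(1,r)$ (Minkowski) and $(p,q)=(r',\infty)$ (H\"older); this is quicker to state but transfers the work onto the interpolation theorem.
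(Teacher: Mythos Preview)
Your proof is correct. The factorization you use is the standard ``three-factor H\"older'' argument for Young's inequality, and your bookkeeping of the exponents is clean; the endpoint cases are handled appropriately as well.

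However, note that the paper does not actually prove this lemma: it is stated without proof in the preliminaries as a classical result to be quoted later (alongside the Gagliardo--Nirenberg inequality and a Calder\'on--Zygmund-type bound). So there is no ``paper's own proof'' to compare against. What you have written is one of the textbook derivations; the Riesz--Thorin alternative you mention at the end is the other common one, and either would be an acceptable justification if the paper had chosen to include one.
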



\begin{lemma}\label{gagliardo}
[Gagliardo-Nirenberg Interpolation Inequality]   
Let $\alpha,r,q,s$ be nonnegative constants satisfying
\begin{equation}\label{condition2}
 0\leq s\leq \alpha<1, \,1< r,\,p,\,q< +\infty
\end{equation}
\begin{equation}\label{condition1}
\text{ and }\quad\frac{1}{p}=\frac{s}{d}+\left(\frac{1}{r}-\frac{1}{d}\right)\alpha+\frac{1-\alpha}{q}. \end{equation}
For any function $u:\mathbb{R}^d\to\mathbb{R}$, suppose $u\in L^q(\mathbb{R}^d)$ and $\nabla u\in L^r(\mathbb{R}^d)$. 
Then there exists a constant $C$ depending only on $\alpha,r,q,s$ such that
\[\left\||\nabla|^s u\right\|_p\leq C\left\|\nabla u\right\|_r^\alpha\left\|u\right\|_q^{1-\alpha}.\]
Condition \eqref{condition2} can be replaced by \begin{equation}\label{condition3}0<s<\alpha<1,\,1<r, \, p<+\infty,\,1\leq q<+\infty.\end{equation}
If $s=0$, the inequality is classical and \eqref{condition2} can be replaced by \[s=0,\,0\leq \alpha\leq 1,\, 1\leq r,\,p\leq +\infty,\, 1\leq q<+\infty.\]

\end{lemma}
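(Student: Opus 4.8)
The plan is to reduce the general inequality to the ``diagonal'' case $s=\alpha$ by one fractional–integration step, and to prove that diagonal case by a pointwise maximal–function estimate.

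\emph{Step 1 (reduction to $s=\alpha$).} If $s=0$ the statement is the classical Gagliardo--Nirenberg--Sobolev inequality, which we take for granted; this also covers the case $\alpha=0$ (which forces $s=0$) and the case $p=\infty$. So assume $0<\alpha<1$. Define $p_\alpha$ by $\tfrac1{p_\alpha}=\tfrac{\alpha}{r}+\tfrac{1-\alpha}{q}$; since $r>1$ and $1\le q$, $0<\alpha<1$, one checks $1<p_\alpha<\infty$. The heart of the proof is the diagonal estimate
\[\big\||\nabla|^{\alpha}u\big\|_{p_\alpha}\le C\,\|\nabla u\|_r^{\alpha}\,\|u\|_q^{1-\alpha}.\]
Granting it, write $|\nabla|^{s}u=|\nabla|^{-(\alpha-s)}\big(|\nabla|^{\alpha}u\big)$; since $0\le s<\alpha<1$, $|\nabla|^{-(\alpha-s)}$ is the Riesz potential of order $\alpha-s\in(0,1)\subset(0,d)$, and the Hardy--Littlewood--Sobolev fractional integration theorem gives $\big\||\nabla|^{s}u\big\|_p\le C\big\||\nabla|^{\alpha}u\big\|_{p_\alpha}$ with $\tfrac1p=\tfrac1{p_\alpha}-\tfrac{\alpha-s}{d}$, valid because $1<p_\alpha<p<\infty$ (the inequality $p_\alpha<p$ is exactly $s<\alpha$, and $p<\infty$ is assumed). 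A direct computation shows $\tfrac1{p_\alpha}-\tfrac{\alpha-s}{d}=\tfrac{s}{d}+\big(\tfrac1r-\tfrac1d\big)\alpha+\tfrac{1-\alpha}{q}$, i.e.\ it matches \eqref{condition1}. Composing with the diagonal estimate proves the lemma (when $s=\alpha>0$ there is nothing to reduce and Step 2 is all that is needed).

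\emph{Step 2 (diagonal estimate via maximal functions).} For $0<\alpha<1$ use the singular–integral representation (legitimate since $\alpha/2\in(0,1)$), $|\nabla|^{\alpha}u(x)=c_{d,\alpha/2}\int_{\mathbb{R}^d}\frac{u(x)-u(x+y)}{|y|^{d+\alpha}}\,dy$. Split at $|y|=R$. On $\{|y|<R\}$ bound $|u(x)-u(x+y)|\le|y|\int_0^1|\nabla u(x+ty)|\,dt$ and estimate the resulting integral by a dyadic decomposition in $|y|$, uniformly in $t$, to get a contribution $\lesssim R^{1-\alpha}M(|\nabla u|)(x)$, where $M$ is the Hardy--Littlewood maximal operator; on $\{|y|>R\}$ use $|u(x)-u(x+y)|\le|u(x)|+|u(x+y)|$ and a dyadic decomposition to get $\lesssim R^{-\alpha}Mu(x)$. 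Hence for all $R>0$,
\[\big|\,|\nabla|^{\alpha}u(x)\,\big|\le C\big(R^{1-\alpha}M(|\nabla u|)(x)+R^{-\alpha}Mu(x)\big),\]
and choosing $R=Mu(x)/M(|\nabla u|)(x)$ yields the pointwise bound $\big|\,|\nabla|^{\alpha}u(x)\,\big|\le C\,(M|\nabla u|(x))^{\alpha}(Mu(x))^{1-\alpha}$. Taking the $L^{p_\alpha}$ norm, applying H\"older with exponents $r/(\alpha p_\alpha)$ and its conjugate — the definition of $p_\alpha$ is exactly what makes the second factor land in $L^q$ — and using the $L^r$– and $L^q$–boundedness of $M$ (here $r,q>1$) gives the diagonal estimate.

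\emph{Expected obstacle.} Step 2 is the delicate part: obtaining the clean pointwise inequality $|\,|\nabla|^{\alpha}u|\lesssim(M|\nabla u|)^{\alpha}(Mu)^{1-\alpha}$ requires care in the dyadic estimates of both pieces of the principal–value integral (in particular the uniformity in the inner variable $t$), and the endpoint $q=1$ allowed by \eqref{condition3} is not covered by this argument since $M$ is only of weak type $(1,1)$ on $L^1$; there one must instead truncate $u$ at its level sets (as in the classical proof of the $L^1\!\to\!L^{d/(d-1)}$ Sobolev inequality), or replace the $Mu$ term by a direct Littlewood--Paley summation, to recover the diagonal estimate and hence the lemma. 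A scaling check ($u\mapsto u(\lambda\cdot)$) confirms that \eqref{condition1} is precisely the relation forcing the exponents, and explains why the product form — rather than a sum $\|\nabla u\|_r+\|u\|_q$ — is the natural conclusion.
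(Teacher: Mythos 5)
Your route is genuinely different from the paper's. The paper outsources the case $q>1$ entirely to Corollary~1.5 of [BaoW] and then devotes the appendix to the endpoint $q=1$: it picks $q'\in(1,r)$ and $\alpha'\in[s,\alpha)$ satisfying the analogue of \eqref{condition1}, applies [BaoW] with $q'$ to get $\||\nabla|^s u\|_p\lesssim\|\nabla u\|_r^{\alpha'}\|u\|_{q'}^{1-\alpha'}$, and then interpolates $\|u\|_{q'}\lesssim\|\nabla u\|_r^{\beta}\|u\|_1^{1-\beta}$ by classical Gagliardo--Nirenberg to finish. You instead give a self-contained argument: HLS reduces to the diagonal $s=\alpha$, and the diagonal case follows from the pointwise bound $||\nabla|^{\alpha}u|\lesssim(M|\nabla u|)^{\alpha}(Mu)^{1-\alpha}$ plus H\"older and the strong $(p,p)$ boundedness of $M$. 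I checked your dyadic computations for the near and far pieces and the exponent arithmetic in Step~1; for $r,q>1$ the argument is sound and buys you independence from [BaoW].

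However, there is a genuine gap, and it sits exactly on the case the paper's appendix exists to prove: $q=1$ under \eqref{condition3}. You flag this yourself, but the suggested remedies (level-set truncation, a direct Littlewood--Paley summation) are sketches, not arguments, and neither is an obvious drop-in fix here because the H\"older step in your diagonal estimate forces the $Mu$ factor into $L^q$, and $M$ is not bounded on $L^1$. The cleanest repair is precisely the paper's move: once you have the inequality for every $q'\in(1,r)$ (which your maximal-function argument does deliver), pick $\alpha',q'$ with $s\le\alpha'<\alpha$, $1<q'<r$ satisfying \eqref{app:q'p}, apply your own result at $q'$, and then substitute the classical one-derivative Gagliardo--Nirenberg bound $\|u\|_{q'}\lesssim\|\nabla u\|_r^{\beta}\|u\|_1^{1-\beta}$; matching exponents gives $\alpha=\alpha'+\beta-\alpha'\beta$ and closes \eqref{condition3}. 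Without that bridge, your proof establishes the $q>1$ part of the lemma but leaves the $q=1$ endpoint — the part the appendix is actually about — unproved.
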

This lemma about Gagliardo-Nirenberg  Inequality is not given in the most general form, which is unnecessary for our purpose. 
We refer readers to \cite{nirenberg} for the classical Gagliardo-Nirenberg inequality. 
To the best of our knowledge, the validity of the inequality with fractional derivatives is proved in Corollary 1.5 \cite{BaoW}. But they did not cover the case when $q=1$ and \eqref{condition3} holds. We postpone the completion of the proof to the appendix. 

\medskip

The following lemma is useful which can be proved by using Calder\'on-Zygmund inequality. We refer readers to Theorem 4.3.3 \cite{grafakos} for the details.

\begin{lemma}\label{lem:absnabla}
There exists a constant $C>0$ such that for all $1<p<\infty$ and $u\in W^{1,p}$
\[\left\||\nabla|u\right\|_p\leq C\max\{p,\,(p-1)^{-1}\} \left\|\nabla u\right\|_p.\]
\end{lemma}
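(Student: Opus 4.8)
The plan is to reduce the inequality to the classical $L^p$ bounds for the Riesz transforms, while keeping careful track of how the operator norms depend on $p$. Write $R_j$ for the $j$-th Riesz transform, i.e. the Fourier multiplier with symbol $-i\xi_j/|\xi|$. On the Schwartz class one checks on the Fourier side that $\sum_{j=1}^d R_j(\partial_j u)$ has symbol $\sum_{j=1}^d(-i\xi_j/|\xi|)(i\xi_j)\,\hat u(\xi)=|\xi|\,\hat u(\xi)$, so that $|\nabla| u=\sum_{j=1}^d R_j(\partial_j u)$. Since Schwartz functions are dense in $W^{1,p}$ for $1<p<\infty$ and both sides depend continuously on $u$ in the relevant topologies, this identity persists for every $u\in W^{1,p}$; in particular $|\nabla| u$, which a priori is only a tempered distribution, is represented by an $L^p$ function as soon as each $R_j\partial_j u$ is. Using $|\partial_j u|\le|\nabla u|$ pointwise, this yields
\[
\bigl\||\nabla| u\bigr\|_p\le\sum_{j=1}^d\|R_j\partial_j u\|_p\le\Bigl(\sum_{j=1}^d\|R_j\|_{p\to p}\Bigr)\|\nabla u\|_p,
\]
and the whole matter reduces to bounding $\|R_j\|_{p\to p}$ by $C(d)\max\{p,(p-1)^{-1}\}$.

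For that I would invoke Calder\'on--Zygmund theory. Each Riesz kernel $c_d\,x_j/|x|^{d+1}$ is a standard Calder\'on--Zygmund kernel, and $R_j$ is bounded on $L^2$ with norm $1$ since its symbol has modulus one; hence $R_j$ is of weak type $(1,1)$ with a constant depending only on $d$. Marcinkiewicz interpolation between this weak $(1,1)$ bound and the $L^2$ bound gives, for $1<p\le2$, an estimate of the form $\|R_j\|_{p\to p}\le C(d)\,(p-1)^{-1}$, the $(p-1)^{-1}$ blow-up being exactly the Marcinkiewicz interpolation constant near the endpoint $p=1$. For $2\le p<\infty$, since $R_j^\ast=-R_j$, duality gives $\|R_j\|_{p\to p}=\|R_j\|_{p'\to p'}\le C(d)\,(p'-1)^{-1}=C(d)\,(p-1)\le C(d)\,p$, which is the source of the linear growth as $p\to\infty$. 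Combining the two ranges yields $\|R_j\|_{p\to p}\le C(d)\max\{p,(p-1)^{-1}\}$ for all $1<p<\infty$; this is precisely the statement, made quantitative, of Theorem 4.3.3 in \cite{grafakos}.

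Putting the two steps together gives $\bigl\||\nabla| u\bigr\|_p\le C(d)\max\{p,(p-1)^{-1}\}\,\|\nabla u\|_p$, which is the claim, the dependence of $C$ on $d$ being harmless since $d$ is fixed throughout. The Fourier-side identity and the density argument are routine; the only point demanding care is the sharp $p$-dependence of the Riesz transform norms. The plain $L^p$-boundedness of $R_j$ for each fixed $p\in(1,\infty)$ is the textbook Calder\'on--Zygmund theorem, but extracting the factor $\max\{p,(p-1)^{-1}\}$ requires tracking the interpolation constant as $p\to1^+$ and running the duality argument as $p\to\infty$---this bookkeeping is the main, though entirely standard, obstacle, and is exactly what the cited reference carries out.
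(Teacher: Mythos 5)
Your proposal is correct and follows the same route the paper indicates: reduce $|\nabla|u$ to Riesz transforms via $|\nabla|u=\sum_j R_j\partial_j u$ and then apply the quantitative Calder\'on--Zygmund bound $\|R_j\|_{p\to p}\lesssim_d\max\{p,(p-1)^{-1}\}$, obtained from the weak-$(1,1)$ estimate, $L^2$ boundedness, Marcinkiewicz interpolation, and duality. The paper gives no proof and simply refers to Theorem 4.3.3 of \cite{grafakos}, which is precisely the CZ bound with this $p$-dependence, so your argument is a faithful expansion of that citation.
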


\medskip

Recall the \textit{homogeneous Sobolev space} $\dot{H}^s(\mathbb{R}^d)$:
\begin{definition}
Let $s\in\mathbb{R}$. The homogeneous Sobolev space is the space of tempered distributions $f$ over $\mathbb{R}^d$, the Fourier
transform of which belongs to $L^1_{loc}(\mathbb{R}^d)$ and satisfies
\[\|f\|_{\dot{H}^s}^2:=\int_{\mathbb{R}^d}|\xi|^{2s} |\hat{f}(\xi)|^2 d\xi<\infty.\]
\end{definition}

\begin{proposition}

\begin{flushleft}

If $|s|<\frac{d}{2}$, $\dot{H}^s$ can be considered as the dual space of $\dot{H}^{-s}$ through the bilinear functional: for any $f\in \dot{H}^s, g\in \dot{H}^{-s}$,
$(f,g)\to\int_{\mathbb{R}^d}f(x)g(x)dx.$

$\dot{H}^1$ is the subset of tempered distributions with
locally integrable Fourier transforms and such that $|\nabla f| \in L^2(\mathbb{R}^d)$.

\end{flushleft}
    
\end{proposition}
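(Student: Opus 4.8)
The plan is to reduce both assertions to Plancherel's theorem---which identifies $\dot H^s$ with a weighted $L^2$ space---together with the Riesz representation theorem. Consider the linear map $T_s f := |\xi|^s\hat f$; by the very definition of the norm it is an isometry from $\dot H^s$ into $L^2(\mathbb{R}^d)$. The first, and genuinely delicate, point is that $T_s$ is \emph{onto} $L^2(\mathbb{R}^d)$ as soon as $|s|<\frac d2$, so that $\dot H^s$ is a Hilbert space: given $g\in L^2(\mathbb{R}^d)$, one must check that $|\xi|^{-s}g$ is locally integrable, hence is the Fourier transform of an honest tempered distribution $f$ with $\hat f=|\xi|^{-s}g$. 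On $\{|\xi|\le 1\}$ this is Cauchy--Schwarz once $|\xi|^{-s}\in L^2(B_1)$, i.e. once $2s<d$; on $\{|\xi|\ge1\}$ there is no local integrability problem (for $s\ge0$ the weight is bounded, and for $s<0$ it only grows polynomially). Running the same argument with $s$ replaced by $-s$, the hypothesis $|s|<\frac d2$ makes \emph{both} $\dot H^s$ and $\dot H^{-s}$ Hilbert spaces; for $|s|\ge\frac d2$ this breaks down, which is exactly why the restriction is needed.

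Granting completeness, the duality is routine. For $f\in\dot H^s$, $g\in\dot H^{-s}$, Parseval gives $\int_{\mathbb{R}^d}fg\,dx=c\int_{\mathbb{R}^d}\hat f(\xi)\,\hat g(-\xi)\,d\xi$; writing the integrand as $\big(|\xi|^s|\hat f(\xi)|\big)\big(|\xi|^{-s}|\hat g(-\xi)|\big)$ and applying Cauchy--Schwarz (using $\||\xi|^{-s}\hat g(-\cdot)\|_{L^2}=\||\xi|^{-s}\hat g\|_{L^2}$) bounds it by $c\|f\|_{\dot H^s}\|g\|_{\dot H^{-s}}$, so the pairing is a bounded embedding $\dot H^s\hookrightarrow(\dot H^{-s})^*$. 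For the reverse inclusion, take $\ell\in(\dot H^{-s})^*$; since $\dot H^{-s}$ is Hilbert, Riesz representation yields $g_0\in\dot H^{-s}$ with $\ell(g)=\langle g,g_0\rangle_{\dot H^{-s}}=c\int_{\mathbb{R}^d}|\xi|^{-2s}\hat g(\xi)\overline{\hat g_0(\xi)}\,d\xi$. Defining $f$ by $|\xi|^s\hat f=c'\,|\xi|^{-s}\hat g_0$ (up to the standard sign/conjugation conventions needed to rewrite $\langle\cdot,g_0\rangle_{\dot H^{-s}}$ as $\int f(\cdot)\,dx$), one has $|\xi|^s\hat f\in L^2$, so $f\in\dot H^s$ with $\|f\|_{\dot H^s}=\|g_0\|_{\dot H^{-s}}$, and $\hat f=|\xi|^{-s}\big(|\xi|^{-s}\hat g_0\big)\in L^1_{\mathrm{loc}}$ by the same low-frequency estimate (this uses $s<\frac d2$). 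Hence every $\ell$ is represented by a unique $f\in\dot H^s$ through $(f,g)\mapsto\int_{\mathbb{R}^d}fg\,dx$, with equal norms, which is the claimed identification.

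For the description of $\dot H^1$, let $f$ be any tempered distribution with $\hat f\in L^1_{\mathrm{loc}}$. Then $\widehat{\partial_j f}(\xi)=i\xi_j\hat f(\xi)$, and by Plancherel $\partial_j f\in L^2(\mathbb{R}^d)$ if and only if $\xi_j\hat f\in L^2(\mathbb{R}^d)$, in which case $\int_{\mathbb{R}^d}|\partial_j f|^2\,dx=c\int_{\mathbb{R}^d}|\xi_j|^2|\hat f(\xi)|^2\,d\xi$. Summing over $j=1,\dots,d$, the condition $|\nabla f|\in L^2(\mathbb{R}^d)$ (that is, $\partial_j f\in L^2$ for each $j$) is equivalent to $|\xi|\hat f\in L^2(\mathbb{R}^d)$, i.e. to $\|f\|_{\dot H^1}<\infty$, with $\int_{\mathbb{R}^d}|\nabla f|^2\,dx=c\|f\|_{\dot H^1}^2$. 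Since both descriptions impose the same standing requirement $\hat f\in L^1_{\mathrm{loc}}$, they define the same space; one may add that for $d\ge3$ this requirement is automatic, since $|\xi|\hat f\in L^2$ forces $\hat f=|\xi|^{-1}\big(|\xi|\hat f\big)\in L^1_{\mathrm{loc}}$ because $|\xi|^{-1}\in L^2_{\mathrm{loc}}(\mathbb{R}^d)$ when $d\ge 3$.

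The only real obstacle is the surjectivity/completeness step of the first paragraph: one has to be certain that the formal manipulations on the Fourier side actually return bona fide tempered distributions, and it is precisely there that $|s|<\frac d2$ is indispensable. Everything else is bookkeeping with Plancherel's identity and Riesz representation.
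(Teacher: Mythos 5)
The paper does not supply a proof of this proposition; it simply states the result and cites Bahouri--Chemin--Danchin \cite{bahouri2011fourier} for the details. So there is no proof in the paper to compare against, and your argument must be judged on its own.

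Your proof is correct and follows what is essentially the standard route: identify $\dot H^s$ isometrically with $L^2$ via the Fourier multiplier $|\xi|^s$, observe that surjectivity of this map --- equivalently, that $|\xi|^{-s}g$ is an honest tempered distribution with locally integrable Fourier transform for every $g\in L^2$ --- is precisely where $|s|<\tfrac d2$ enters (via $|\xi|^{-s}\in L^2_{\mathrm{loc}}$ near the origin and slow growth at infinity), so that both $\dot H^s$ and $\dot H^{-s}$ become genuine Hilbert spaces; then the duality statement is Parseval + Cauchy--Schwarz in one direction and Riesz representation in the other. The two places you flag as ``bookkeeping'' do deserve slightly more care if this were being written out in full: the identification of the sesquilinear inner product $\langle\cdot,g_0\rangle_{\dot H^{-s}}$ with the bilinear pairing $\int fg\,dx$ in the proposition involves a complex conjugate and a reflection $\xi\mapsto-\xi$, and the ``grows polynomially'' remark for $s<0$ should be turned into the explicit Cauchy--Schwarz estimate $\int|\xi|^{|s|}|g|\,|\phi|\,d\xi\le\|g\|_{L^2}\||\xi|^{|s|}\phi\|_{L^2}$ against a Schwartz seminorm of $\phi$. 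The $\dot H^1$ description via $\widehat{\partial_jf}=i\xi_j\hat f$ and Plancherel is correct, and the closing observation that $\hat f\in L^1_{\mathrm{loc}}$ is automatic for $d\ge3$ (since $|\xi|^{-1}\in L^2_{\mathrm{loc}}$) is a nice touch that is consistent with the paper's standing assumption $d\ge 3$. In short: this is a complete and standard proof of a statement the paper chose to cite rather than prove.
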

For details and more properties, we refer readers to the book \cite{bahouri2011fourier}.


\bigskip

\begin{flushleft}
\textbf{Notations.}
\end{flushleft}

\medskip

We write $\mathbb{N}$ as all natural numbers and $\mathbb{N}^+$ as all positive natural numbers.

For $p\geq 1, \gamma\in (0,1)$, for simplicity, we denote
$$\|\cdot\|_p:=\|\cdot\|_{L^p(\R^d)}\hbox{ and }\|\cdot\|_\gamma:= \|\cdot\|_{C^{\gamma}(\R^d)}.
$$
Given two points $(x,t),(y,s)\in\mathbb{R}^{d+1}$, we define the distance between them to be
\begin{equation}
    \label{distance in d plus one}
    |(x,t),(y,s)|:=|x-y|+|t-s|
\end{equation}
and we denote $|(x,t)|:=|(x,t),(0,0)|.$

For $j=d,d+1$, let $u:\mathbb{R}^j \to \R$ be a bounded measurable function and  $S$ be an open subset of $\mathbb{R}^j $. Denote
\[  osc_S(u):=\esssup_{x\in S}u-\essinf_{x\in S}u.\]

We write $B_R(x)$ as a ball in $\mathbb{R}^d$ centered at $x$ with radius $R$. We denote $B_R:=B_R(0)$.
The scaled parabolic cylinders are written as
\begin{equation*}Q(r,c):=\{x,|x|\leq r\}\times [-c{r^2},0] \hbox{ for } r,c>0. \end{equation*}
We denote the scaled parabolic cylinders near $t=0$ by
\begin{equation}\label{Qrc}Q^0(r,c):=\{x,|x|\leq r\}\times [0,c{r^2}] \hbox{ for } r,c>0. \end{equation}
The standard parabolic cylinders are denoted by $Q_r:=Q(r,1)$ and $Q^0_r:=Q^0(r,1)$.

\medskip

Throughout this paper, the constants $\{C\}$ represent  {\it universal constants}, by which we mean various constants that only depends on $m,d,s,\gamma$ and $L^1,L^\infty$ or $C^\gamma$ norms of the initial data $u_0$.  We may write $C(A)$ or $C_A$ to emphasize the dependence of $C$ on $A$.

We write 
$A\lesssim B$
if $A\leq CB$ for some universal constant $C$. When writing $A\lesssim_D B$, we mean $A\leq CB$ where $C$ depends on universal constants and $D$ (with particular emphasis on the dependence of $D$). By $A\sim B$, we mean both $A\lesssim B$ and $B\lesssim A$ are satisfied.

\medskip

Let $S$ be a measurable set in $\mathbb{R}^d$. The indicator function $\chi_S(x)$ equals $1$ if $x\in S$ and it equals $0$ otherwise.

\section{A Priori Estimates} \label{subsection4.1}

In this section several a priori estimates (mainly $L^\infty_tL^p_x$ and $L^\infty_tL^\infty_x$ bounds) are obtained. 

\medskip

We start with regularizing $\nabla K_s$ which is slightly different from the previous regularization.

Let us start with regularizing the equation \eqref{main} for $s\in (0,1]$.
Instead of modifying $\mathcal{K}_s$, we regularize $\nabla {K}_s$ by 
\begin{equation}
\label{def V}
V_{s,\epsilon }(x):=\zeta_\epsilon (x)\nabla_x {K}_s(x,0)
\end{equation}
where $\zeta_\epsilon $ (for some small $\epsilon >0$) is a smooth, radially symmetric, non-negative function that
\begin{align*}
   & \zeta_\epsilon=0 \text{ for }|x|\leq\epsilon \text{ and  }|x|\geq {2}/{\epsilon},\quad \zeta_\epsilon=1 \text{ for }|x|\in [2\epsilon,1/\epsilon],\\
   &\quad |\nabla\zeta_\epsilon |\lesssim 1/\epsilon \text{ for }|x|\leq 2\epsilon \quad \text{ and }\quad |\nabla\zeta_\epsilon |\lesssim \epsilon \text{ for }|x|\geq 1/\epsilon.
\end{align*} 
Then there is
\begin{itemize}
\item[1.] $V_{s,\epsilon }$ is a smooth vector field and $V_{s,\epsilon }(x)=c(-d+2s)|x|^{-d-2+2s}x$ for $|x|in [2\epsilon ,1/\epsilon]$;

\medskip

\item[2.] $|\nabla\cdot V_{s,\epsilon }(x)|\leq C|x|^{-d-2+2s}$ holds for all $x$ for some $C>0$ only depending on $d,s$.
\end{itemize}


For small $\epsilon>0$, we consider $u_{\epsilon}$ which solves the following problem:
\begin{equation}\label{eqn:approx}
\left\{\begin{aligned}
&\frac{\partial}{\partial t}u_{\epsilon}=\epsilon \Delta u_{\epsilon}+\Delta u^m_{\epsilon}-\nabla\cdot (u_{\epsilon}V_{s,\epsilon}* u_{\epsilon})= 0 &\text{ in }\mathbb{R}^d\times [0,\infty), \\
& u_{\epsilon}(x,0)=u_{0}(x) &\text{ on } \mathbb{R}^d.
\end{aligned}
\right.
\end{equation}
$V_{s,\epsilon}$ is smooth and compactly supported. The convolution integral $V_{s,\epsilon}*u$ is well-defined since $V_{s,\epsilon}$ bounded. 
Equation \eqref{eqn:approx} is uniformly parabolic. The existence and uniqueness of a solution $u_{\epsilon}$ is proved in Theorem 4.2 \cite{bertozzi2009} and the solution is smooth.

\medskip

In the following theorems, we are going to prove that $u_{\epsilon}$ are uniformly bounded independent of $\epsilon$. As mentioned before, we will treat the following cases separately: $\{s>\frac{1}{2},2-2s/d< m<2\}$, $\{s>\frac{1}{2}, m\geq 2\}$ and $\{s\leq \frac{1}{2}, 2-2s/d<m<2\}$. We use a refined iteration method and this approach can be found in Lemma 5.1 \cite{preventing}.

\begin{theorem}\label{propm1}
Suppose $d\geq 3$, $s\in(\frac{1}{2},1]$, $m\in(2-\frac{2s}{d},2)$ and $u_{0}\in L^1(\mathbb{R}^d)\cap L^\infty(\mathbb{R}^d)$ is non-negative. Let $u:=u_{\epsilon}$ be the solution to \eqref{eqn:approx}. Then there exists a constant $C$ such that for all $n\geq 1$ and $t\in \mathbb{R}^+$ there is
\[\left\|u\right\|_{L^n}(t)\leq C.\]
The constant $C$ only depends on $d,s,m$ and the $L^1,L^\infty$ norms of $u_{0}$.
\end{theorem}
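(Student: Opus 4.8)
The plan is to derive a closed differential inequality for the time evolution of $\|u\|_{L^n}^n$ for a carefully chosen increasing sequence of exponents $n$, then run a Moser-type iteration that stays uniform in $\epsilon$ and in $n$. First I would multiply the regularized equation \eqref{eqn:approx} by $n u^{n-1}$ and integrate over $\mathbb{R}^d$. The diffusion terms produce $-\epsilon\, 4\tfrac{n-1}{n}\|\nabla u^{n/2}\|_2^2$ (which we keep, it only helps) and $-\tfrac{4mn(n-1)}{(m+n-1)^2}\|\nabla u^{(m+n-1)/2}\|_2^2$, the good dissipation term. The drift term, after integrating by parts, becomes $(n-1)\int u^{n} (V_{s,\epsilon}*u)\cdot\tfrac{\nabla u}{?}$ type expression; more precisely it equals $-\tfrac{n-1}{?}\int \nabla(u^{n})\cdot(V_{s,\epsilon}*u)\,dx = \tfrac{n-1}{1}\cdot\tfrac{1}{n}\cdot(\text{something})$ — the cleanest route is to write it as $\tfrac{n-1}{n+?}\int u^{n+1}\,(\nabla\cdot V_{s,\epsilon}*u)$-ish after a further integration by parts, using property 2 of $V_{s,\epsilon}$ that $|\nabla\cdot V_{s,\epsilon}|\le C|x|^{-d-2+2s}$, i.e. the kernel of $(-\Delta)^{1-s}$ up to constants. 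So the aggregation contribution is controlled by $C n \int u^{n+1}\,\big((-\Delta)^{s-1}?\big)$; since $s\le 1$, $1-s\ge 0$ and $|x|^{-d-2+2s}$ is (locally) the kernel $K_{1-s}$-type, giving a term bounded by $Cn\,\big\||\nabla|^{?}\big\|$ — the key is to express $\int u^{n+1}(-\Delta)^{-(1-s)}(u)$ via the bilinear form $\mathcal B$ and Proposition \ref{propnonlocal}, turning it into $\int \tfrac{\nabla(u^{n+1-?})\cdot\nabla(u^{?})}{|x-y|^{d-2+2(1-s)}}$, which after Cauchy-Schwarz and Hardy–Littlewood–Sobolev reduces to a product of $\||\nabla| u^{a}\|$ norms.

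The heart of the estimate is then the following: the bad term must be dominated by the good dissipation $\|\nabla u^{(m+n-1)/2}\|_2^2$ plus a lower-order power of $\|u\|_{L^n}$. Here is where the subcriticality $m>2-\tfrac{2s}{d}$ and the scaling \eqref{def:ur} enter decisively: they force the precise Hölder/interpolation exponents (the choice analogous to \eqref{def:pq}) so that the nonlinear term $\int u^{n+1}(-\Delta)^{s-1}u$, after Gagliardo–Nirenberg (Lemma \ref{gagliardo}) and Lemma \ref{lem:absnabla}, splits as $\le \tfrac{1}{2}\cdot\tfrac{4mn(n-1)}{(m+n-1)^2}\|\nabla u^{(m+n-1)/2}\|_2^2 + C(n)\,\|u\|_{L^{?}}^{\theta}$ with the residual exponent strictly below a controllable threshold. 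I would track the $n$-dependence of the constants explicitly: one expects $C(n)\sim n^{\beta}$ for some fixed $\beta$, and the interpolation gap produced by subcriticality yields $\tfrac{d}{dt}\|u\|_n^n \le -c\|u\|_n^n + C n^{\beta}\,\big(\sup_{k< n}\|u\|_k\big)^{\gamma}$ or an iterative inequality of Moser type relating the $L^n$ norm at "level $n$" to the $L^{n/?}$ norm at the previous level.

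Finally, I would set up the iteration over $n_j \to \infty$ (say $n_j$ geometric), feed the bound at level $j$ into level $j+1$ using the differential inequality integrated in time (the $-c\|u\|_n^n$ term gives a uniform-in-time bound of the form $\|u\|_{n_j}(t) \le \max\{\|u_0\|_{n_j},\, (C n_j^{\beta} M_{j-1}^{\gamma})^{1/n_j}\}$ where $M_{j-1}=\sup_t\|u\|_{n_{j-1}}$), and check that $\prod_j (C n_j^{\beta})^{1/n_j}$ converges, so that $M_j$ stays bounded as $j\to\infty$. Passing $j\to\infty$ gives $\|u\|_{L^\infty}(t)\le C$ uniformly, with $C$ depending only on $d,s,m,\|u_0\|_1,\|u_0\|_\infty$; the stated $L^n$ bound for every finite $n$ is then immediate by interpolation between $L^1$ (conserved) and $L^\infty$. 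The main obstacle I anticipate is the bookkeeping in the first paragraph: correctly rewriting the aggregation term so that only $|\nabla\cdot V_{s,\epsilon}|$ appears (needed because $\nabla V_{s,\epsilon}$ alone is too singular), and then finding the exact exponent choices dictated by scaling so the Gagliardo–Nirenberg absorption closes with room to spare — a mis-chosen exponent breaks subcritical scaling and the argument fails.
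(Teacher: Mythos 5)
Your overall framework — multiply by a power of $u$, extract the porous-medium dissipation $\|\nabla u^{(m+n-1)/2}\|_2^2$, rewrite the aggregation term via $\nabla\cdot V_{s,\epsilon}$ and property 2, interpolate, and run a uniform-in-$n$ iteration — matches the paper's. But there is a genuine gap at the heart of your step 1, exactly the spot where you write question marks.

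After the integration by parts you have $X=\int\bigl((-\nabla\cdot V_{s,\epsilon})*u\bigr)\,u^n\,dx$, which is \emph{not} of the form $\int u^{n+1}(\cdots)$: one copy of $u$ sits inside the convolution, and the other appears as $u^n$ locally. Symmetrizing the kernel gives the bilinear form
\[
X\lesssim \iint \frac{(u(x)-u(y))(u^n(x)-u^n(y))}{|x-y|^{d+2-2s}}\,dxdy,
\]
but as it stands the two powers $(1,n)$ are badly mismatched with the dissipation exponent $l=(m+n-1)/2$. The key idea you are missing is the pointwise monotonicity inequality for $u\ge 0$ and $1\le l\le n$:
\[
\bigl(u(x)-u(y)\bigr)\bigl(u^n(x)-u^n(y)\bigr)\ \le\ \bigl(u^l(x)-u^l(y)\bigr)\bigl(u^{n+1-l}(x)-u^{n+1-l}(y)\bigr),
\]
with precisely $l=(m+n-1)/2$. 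This shifts the bilinear form to $\mathcal{B}_{1-s}(u^l, u^{n+1-l})$, i.e.\ (by Proposition~\ref{propnonlocal}) to $\int\nabla(-\Delta)^{-s}u^{l}\cdot\nabla u^{n+1-l}\,dx=\int (-\Delta)^{1-s}u^l\,u^{n+1-l}\,dx$, and \emph{that} is what H\"older plus Gagliardo–Nirenberg can absorb into $\|\nabla u^l\|_2^2$ with a subcritical margin. Without this rebalancing of exponents, the Gagliardo–Nirenberg step does not close — it is not merely bookkeeping; the choice $l=(m+n-1)/2$ is forced by the need to match the dissipation, and the inequality above is what makes that choice available. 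Your ``$\int u^{n+1}(-\Delta)^{s-1}u$'' and the ``$u^{n+1-?}$, $u^?$'' placeholders indicate you have not located this step.

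Two further mismatches with the actual argument, smaller but worth noting. First, the paper does not use Cauchy–Schwarz plus Hardy–Littlewood–Sobolev after Proposition~\ref{propnonlocal}; it uses plain H\"older to obtain $\||\nabla|^{2-2s}u^l\|_2\,\|u^{n+1-l}\|_2$ and then Lemma~\ref{gagliardo} on each factor, with the exponents verified to lie in the admissible range precisely because $s>\tfrac12$ (which guarantees $\alpha>2-2s$) and $m>2-\tfrac{2s}{d}$ (which gives $\theta(n)<2$). Second, the iteration is not a free geometric sequence: it is $n_{k+1}=2n_k+1-m$, chosen so that $l$ at level $k{+}1$ equals exactly $n_k$; this is what lets the differential inequality at level $k{+}1$ reference $\|u^{n_k}\|_1$, which is the quantity controlled at level $k$, and then the conclusion is delegated to Lemma~\ref{iteration}.
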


\begin{proof}
Without loss of generality, let us suppose that the total mass of $u_{0}$ is $1$ and so is the total mass of $u(\cdot,t)$ by the equation. Since $u$ is smooth, for $n\geq 3-m$ we multiply $u^{n-1}$ on both sides of \eqref{eqn:approx} and find
\begin{align}
\partial_t \int_{\mathbb{R}^d} u^{n}dx&\leq -n \int_{\mathbb{R}^d} \nabla u^m\nabla u^{n-1}dx+n\int_{\mathbb{R}^d}(u V_{s,\epsilon}*u)\cdot\nabla u^{n-1}dx\nonumber\\
\label{ineq n2}&\leq-C_m\int_{\mathbb{R}^d}\left|\nabla u^{\frac{n+m-1}{2}}\right|^2dx+( n-1)\int_{\mathbb{R}^d}  V_{s,\epsilon}*u \nabla u^{n}dx.
\end{align}
By property 2. of $ V_{s,\epsilon}$, we obtain
\begin{align*}X&:=\int_{\mathbb{R}^d}  V_{s,\epsilon}*u \nabla u^{n}dx=\int_{\mathbb{R}^d} (-\nabla\cdot  V_{s,\epsilon})*u \; u^{n}dx\\
&\leq C \iint_{\mathbb{R}^{2d}}\frac{\left(u(x)-u(y)\right)\left(u^{n}(x)-u^{n}(y)\right)}{|x-y|^{d+2-2s}}dxdy  \quad  
\end{align*}
Let $l=\frac{m+n-1}{2}<n$. Since $u$ is non-negative
\[\left(u(x)-u(y)\right)\left(u^{n}(x)-u^{n}(y)\right)\leq \left(u^l(x)-u^l(y)\right)\left(u^{n+1-l}(x)-u^{n+1-l}(y)\right).\]
Then
\begin{align*}
X&\leq C \iint_{\mathbb{R}^{2d}}\frac{\left(u^l(x)-u^l(y)\right)\left(u^{n+1-l}(x)-u^{n+1-l}(y)\right)}{|x-y|^{d+2-2s}}dxdy\\
&= C\int_{\mathbb{R}^d} \nabla (-\Delta)^{-s} u^l \nabla u^{n+1-l}dx \quad \text{  ( by Proposition \ref{propnonlocal})}\\
&\leq C \int_{\mathbb{R}^d}  \left|(-\Delta)^{1-s} u^l\right|  u^{n+1-l}dx
\\
&\leq C \left\||\nabla|^{2-2s}u^l\right\|_{2}\left\|u^{n+1-l}\right\|_{2}\quad \text{ ( by H\"{o}lder's inequality) }
\\
&=C \left\||\nabla|^{2-2s}u^l\right\|_{2}\left\|u^{l}\right\|_{2\frac{n+1-l}{l}}^\frac{n+1-l}{l}.\end{align*}
By Gagliardo-Nirenberg interpolation inequality
\[\left\||\nabla|^{2-2s}u^l\right\|_{2}\lesssim\left\|\nabla u^l\right\|_2^\alpha\left\|u^l\right\|^{1-\alpha}_{1},\]
\[\left\|u^l\right\|_{2\frac{n+1-l}{l}}\lesssim\left\|\nabla u^l\right\|_2^\beta\left\|u^l\right\|^{1-\beta}_{1}\]
with $\alpha(n),\beta(n)$ satisfying
\[\frac{1}{2}=\frac{2-2s}{d}+\l\frac{1}{2}-\frac{1}{d}\r \alpha+1-\alpha,\quad \frac{1}{2}\frac{l}{n+1-l}=\l\frac{1}{2}-\frac{1}{d}\r \beta+1-\beta.\]
It can be checked that $\alpha>2-2s$ if and only if $s>\frac{1}{2}$. The conditions of Lemma \ref{gagliardo} are satisfied.

Let $\theta(n):=\alpha+\frac{n+1-l}{l}\beta.$
Then the above two equalities give
\[\l\frac{1}{2}+\frac{1}{d}\r\theta(n)=\frac{2-2s}{d}+\frac{n+1-l}{l}=\frac{2-2s}{d}+\frac{4-2m}{n-1+m}+1.\]
Since $m<2$, $\{\theta(n)\}$ is decreasing as $n\rightarrow\infty$ and the limit equals $\l\frac{2-2s}{d}+1\r\Big/\l\frac{1}{2}+\frac{1}{d}\r$ which is less than $2$. Very importantly when $n=3-m$, $\theta({3-m})< 2$ is equivalent to
\[\l\frac{2-2s}{d}+3-m\r\Big/\l\frac{1}{2}+\frac{1}{d}\r<2 \iff m>2-\frac{2s}{d}.\]
So for all $n\geq 3-m$, $\theta=\theta(n)\in (\tau, 2-\tau)$ for some $\tau(m,s)>0$.
Then
\[X\leq \left\|\nabla u^l\right\|_2^{\theta}\left\|u^l\right\|_1^{1+\frac{n+1-l}{l}-\theta}.\]
By H\"{o}lder's inequality, for any small $\delta>0$
\begin{equation}\label{Xn}X\leq \frac{\delta}{n}\left\|\nabla u^l\right\|_2^2+C_{\delta}n^{c_n}\left\|u^l\right\|_1^{\theta'}\end{equation}
where
\[\theta'=\theta'(n)=2+\frac{2(2-m)}{l(2-\theta(n))}\leq 2+Cn^{-1}\quad \text{ and }\quad c_n=\frac{\theta(n)}{2-\theta(n)}.\]

Since $\theta(n)<2$ uniformly, $\{c_n\}$ are uniformly bounded in $n$ for all $n\geq 3-m$.
By Gagliardo-Nirenberg inequality
\[\left\| u^l\right\|_{\frac{n}{l}}\lesssim \left\|\nabla u^l\right\|^{{\gamma}}_2\left\|u^l\right\|^{1-{\gamma}}_{1}\text{ where } 
{\gamma}=\l\frac{n-l}{n}\r\Big/\l\frac{1}{2}+\frac{1}{d}\r.\]
By direct calculations, $\frac{{\gamma}n}{l}<2$. Then by Young's inequality
\begin{equation}\label{leftbound}
\int_{\mathbb{R}^d} u^n dx\lesssim \left\|\nabla u^l\right\|^{\frac{{\gamma} n}{l}}_2\left\|u^l\right\|^{\frac{(1-{\gamma})n}{l}}_{1}\lesssim  \left\|\nabla u^l\right\|_2^2+\left\| u^l \right\|_1^{\gamma'}
\end{equation}
where
\[\gamma'=\gamma'(n)=2-\frac{2(m-1)}{2l-{\gamma} n}\leq 2-C n^{-1}\text{ and (not hard to check) }\gamma'>0.\]

Finally by \eqref{ineq n2}\eqref{Xn}\eqref{leftbound}, we obtain for all $n\geq 3-m$
\[ \partial_t\int_{\mathbb{R}^d} u^n dx+c\int_{\mathbb{R}^d} u^n dx\leq Cn\l\int_{\mathbb{R}^d} u^ldx\r^{\gamma'}+Cn^{c_n+1}\l\int_{\mathbb{R}^d} u^ldx\r^{\theta'}
\]
where $c,C$ are independent of $n$.

\medskip

Now define a sequence $\{n_k, k\in \mathbb{N}^+\}\subset\mathbb{R}^+$ by
\begin{equation}\label{sequence}
    n_0=1,\quad n_{k+1}:=2  n_k+1-m \text{ for all }k\geq 0.
\end{equation}
Then $n_k=2^{k}(2-m)-1+m$. Since $m<2$, we have $n_k\to \infty$ as $k\to\infty$. 

\medskip

Notice for all $k\geq 1$, $n_k\geq n_1=3-m$.
Thus we can take $n=n_{k+1}$ in the above for all $k\geq 1$. Then $l=n_k$ and $n_k\sim 2^k$. If writing $A_k=\int_{\mathbb{R}^d} u^{n_k}dx$, we have proved for all $k\geq 1$
\[\frac{d}{dt}A_{k+1}+cA_{k+1}\leq C^k+C^k A_k^{2+O(2^{-k})}.\]
To conclude the proof we need the following lemma.
\end{proof}

\begin{lemma}[Lemma 3.1 \cite{kimpaul}]\label{iteration}
Suppose $n_0=1$ and $n_{k+1}:=2  n_k+a \text{ for all }k\geq 0$ with $a>-1$. Let $\{A_k(\cdot),k\in \mathbb{N}^+\}$ be a sequence of differentiable, positive functions on $[0,\infty)$ satisfying
\[\frac{d}{dt}A_k+C_0 A_k\leq C_1^{n_k}+{C_1}^k (A_{k-1})^{2+{C_1}n_k^{-1}},\]
for some constants $C_0,{C_1}$. 
Let $B_k(t):=A_k^{(n_k^{-1})}(t)$ and suppose  $\{B_0(t), B_k(0)\}$ are uniformly bounded with respect to $k\in\mathbb{N}, t>0$.
Then $\{B_k(t)\}$ are uniformly bounded for all $t> 0$ and $k\in \mathbb{N}^+$.
\end{lemma}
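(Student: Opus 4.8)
The plan is a Moser--Stampacchia iteration, organized so that the constants produced at the infinitely many steps do not accumulate. Fix $t>0$ and set $\alpha_k:=\sup_{[0,t]}A_k$. Integrating the differential inequality against $e^{C_0\sigma}$ (Gr\"onwall), together with $A_{k-1}(\sigma)^{2+C_1/n_k}\le\alpha_{k-1}^{\,2+C_1/n_k}$ for $\sigma\in[0,t]$, yields
\[\alpha_k\le A_k(0)+\frac{C_1^{n_k}}{C_0}+\frac{C_1^k}{C_0}\,\alpha_{k-1}^{\,2+C_1/n_k}\ \le\ 3\max\!\Big(A_k(0),\ \frac{C_1^{n_k}}{C_0},\ \frac{C_1^k}{C_0}\,\alpha_{k-1}^{\,2+C_1/n_k}\Big).\]
After the harmless normalization $A_k\mapsto A_k+1$ (which preserves the form of the hypothesis with larger universal constants) we may assume $A_k\ge 1$, hence $B_k^*:=\alpha_k^{1/n_k}\ge 1$; let $\beta\ge 1$ be a common bound for $\sup_{t>0}B_0(t)$ and for $\sup_k B_k(0)$, so $A_k(0)\le\beta^{n_k}$ and $B_0^*\le\beta$.

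The one place where care is essential is taking $n_k$-th roots: the plan is to use $(3\max(x,y,z))^{1/n_k}=3^{1/n_k}\max\!\big(x^{1/n_k},y^{1/n_k},z^{1/n_k}\big)$ rather than the subadditivity $(x+y+z)^{1/n_k}\le x^{1/n_k}+y^{1/n_k}+z^{1/n_k}$; the latter introduces a genuine additive constant of size $\sim\beta+C_1$ at every step, which would sum to an unbounded bound. Using $\alpha_{k-1}^{(2+C_1/n_k)/n_k}=(B_{k-1}^*)^{\,n_{k-1}(2+C_1/n_k)/n_k}$ together with $2n_{k-1}=n_k-a$, the exponent on $B_{k-1}^*$ is $1+\tau_k/n_k$ with $|\tau_k|\le|a|+C_1$; since $B_{k-1}^*\ge 1$ we may enlarge it to $1+\tilde\tau_k$, $\tilde\tau_k:=(|a|+C_1)/n_k$. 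Also $(C_1^{n_k}/C_0)^{1/n_k}=C_1C_0^{-1/n_k}\le C_1\max(1,C_0^{-1})$. One is left with a clean recursion
\[B_k^*\ \le\ \max\!\big(\Theta_0,\ c_k\,(B_{k-1}^*)^{1+\tilde\tau_k}\big),\qquad \Theta_0:=3\max\!\big(\beta,\,C_1\max(1,C_0^{-1})\big),\quad c_k:=\big(3C_1^k/C_0\big)^{1/n_k}.\]

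Then I would solve this $\max$-recursion. From $n_{k+1}=2n_k+a$ with $a>-1$ one gets $n_k=2^k(1+a)-a$, which grows geometrically, so $\sum_k 1/n_k<\infty$, $\sum_k\tilde\tau_k<\infty$, and $\sum_k\big|\ln c_k\big|=\sum_k\frac{\big|\ln3+k\ln C_1-\ln C_0\big|}{n_k}<\infty$ (this is exactly where the geometric growth of $n_k$ enters). Setting $b_k:=\ln B_k^*\ge 0$, $\gamma_k:=\ln c_k$, $\ell_0:=\ln\Theta_0$, the recursion reads $b_k\le\max\big(\ell_0,\ \gamma_k+(1+\tilde\tau_k)b_{k-1}\big)$. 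Dividing by $\Pi_k:=\prod_{i=1}^k(1+\tilde\tau_i)\ge 1$ and writing $\tilde b_k:=b_k/\Pi_k$ turns this into $\tilde b_k\le\max\big(\ell_0,\ |\gamma_k|+\tilde b_{k-1}\big)$, and a one-line induction gives $\tilde b_k\le\max(\ell_0,b_0)+\sum_{j\ge 1}|\gamma_j|$. Hence $b_k\le\big(\max(\ell_0,\ln\beta)+\sum_j|\gamma_j|\big)\Pi_\infty$ with $\Pi_\infty\le e^{\sum_i\tilde\tau_i}<\infty$, a bound depending only on $C_0,C_1,a,\beta$ and in particular independent of $k$ and of $t$. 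Exponentiating and using $B_k(t)\le\sup_{[0,t]}B_k=B_k^*$ finishes the proof.

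I expect the main obstacle to be precisely the second step above — resisting the additive estimate of $(x+y+z)^{1/n_k}$ and instead routing everything through $\max$ and then through the logarithm with the $\Pi_k$-normalization, so that when the nonlinear term dominates no extra constant appears, and when the $k$-independent terms dominate one simply inherits the fixed bound $\Theta_0$. The remaining ingredients (the Gr\"onwall step, the summability of $\sum 1/n_k$, $\sum k/n_k$, $\sum\tilde\tau_k$, and the trivial induction for the $\max$-recursion) are routine once the iteration is set up this way.
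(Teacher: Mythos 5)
The paper does not give its own proof of this lemma; it simply cites Lemma~3.1 of \cite{kimpaul}, so there is no in-paper argument to compare against. Evaluating your proposal on its own terms: it is correct, and it is a clean, self-contained Moser-type iteration. You do implicitly use $C_0>0$ and $C_1>0$ (the former in bounding $(1-e^{-C_0\sigma})/C_0$ by $1/C_0$, the latter for $C_1^{n_k}$, $\ln C_1$ to make sense); these are unstated in the lemma but are necessary for the conclusion to hold and are satisfied in every application in the paper, so this is not a gap, merely worth making explicit.

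Two features of your argument are exactly the ones that make it work and are worth emphasizing as the ``key ideas'': (i) routing the $n_k$-th root through $\max$ rather than through subadditivity, so that no additive constant of order $\beta+C_1$ is generated at each step; and (ii) the shift $A_k\mapsto A_k+1$, which forces $B_k^*\ge 1$ and lets you replace the signed correction $\tau_k/n_k$ in the exponent by the nonnegative $\tilde\tau_k$ without changing the inequality's direction. The remaining steps (Gr\"onwall with $e^{C_0\sigma}$; the identity $2n_{k-1}=n_k-a$ giving exponent $1+O(1/n_k)$; geometric growth of $n_k=2^k(1+a)-a$ ensuring $\sum 1/n_k$, $\sum k/n_k<\infty$; the $\log/\Pi_k$-normalization collapsing the multiplicative $\max$-recursion to an additive one solved by a one-line induction) are all correct as written, and I verified the bookkeeping: the normalization preserves the hypothesis with a larger $C_1$, $\ell_0=\ln\Theta_0>0$ so $\ell_0/\Pi_k\le\ell_0$, and the final bound $b_k\le\Pi_\infty\big(\max(\ell_0,\ln\beta)+\sum_j|\gamma_j|\big)$ is independent of $k$ and $t$, which yields the claim after exponentiating and undoing the shift.
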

We refer readers to Lemma 3.1 \cite{kimpaul} for the proof.

\medskip

Now we consider the case when $m\geq 2$ and $s>\frac{1}{2}$.

\begin{theorem}\label{propm2}
Theorem \ref{propm1} holds in the regime: $m\geq2$, $s\in (\frac{1}{2},1]$.
\end{theorem}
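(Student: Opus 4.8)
The plan is to re-run the Moser-type iteration of Theorem~\ref{propm1}, but re-centered. Observe first that the last step there fails for $m\ge 2$: with the doubling rule $n_{k+1}=2n_k+1-m$ and $n_0=1$ one gets $n_k=(m-1)+2^k(2-m)$, which is constantly $1$ when $m=2$ and runs off to $-\infty$ when $m>2$. The relevant fixed point of the recursion is now $n^{\ast}=m-1>1$, so one must start \emph{above} it. I would take
\[
n_0:=m,\qquad n_{k+1}:=2n_k+1-m,\qquad\text{so that}\qquad n_k=(m-1)+2^k\longrightarrow\infty .
\]
Exactly as before, taking $n=n_{k+1}$ and $l=\tfrac{n+m-1}{2}$ then forces $l=n_k$, which is what closes the recursion on $A_k:=\int u^{n_k}\,dx$.

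The price for starting at $n_0=m$ is a \emph{base-case} bound $\sup_{t\ge0}\int u_\epsilon^{\,m}\,dx\le C$ uniform in $\epsilon$, and this is the genuinely new ingredient: multiplying \eqref{eqn:approx} by $u^{m-1}$ hits exactly the same singular-kernel obstruction as the general $L^n$ estimate, so the $L^m$ bound is not ``for free''. I would get it from the Lyapunov structure of \eqref{eqn:approx}. Since $V_{s,\epsilon}$ is a radial gradient field, write $V_{s,\epsilon}=\nabla K_{s,\epsilon}$ with $K_{s,\epsilon}$ radial; from $K_s'<0$ and $0\le\zeta_\epsilon\le1$ one has $0\le K_{s,\epsilon}\le K_s$. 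A direct computation shows that
\[
\mathcal F_\epsilon[u]=\epsilon\!\int u\log u\,dx+\frac1{m-1}\!\int u^m\,dx-\frac12\!\iint K_{s,\epsilon}(x-y)u(x)u(y)\,dx\,dy
\]
is non-increasing along \eqref{eqn:approx}, and $\mathcal F_\epsilon[u_0]$ is bounded uniformly in $\epsilon$ (the entropy term is $O(\epsilon)$ and the interaction term is $\le\iint K_su_0u_0\lesssim\|u_0\|_{2d/(d+2s)}^{2}<\infty$ by Hardy--Littlewood--Sobolev). Hence $\frac1{m-1}\int u_\epsilon(t)^m\le C+\tfrac12\iint K_su_\epsilon u_\epsilon\lesssim C+\|u_\epsilon(t)\|_{2d/(d+2s)}^{2}$, and interpolating $L^{2d/(d+2s)}$ between the conserved mass and $L^m$ gives $\|u_\epsilon(t)\|_{2d/(d+2s)}^{2}\lesssim\bigl(\int u_\epsilon(t)^m\,dx\bigr)^{2\lambda/m}$ with $2\lambda=\tfrac{m(d-2s)}{d(m-1)}$; the exponent $2\lambda/m<1$ \emph{precisely because} $m>2-2s/d$, so Young's inequality absorbs the term and yields $\sup_t\int u_\epsilon(t)^m\,dx\le C$. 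This is the only place subcriticality enters (for $m\ge2$ it is automatic, but it is genuinely used here).

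With the base case in hand, the body of the argument is that of Theorem~\ref{propm1} run for the indices $n=n_1,n_2,\dots$: multiply \eqref{eqn:approx} by $u^{n-1}$, estimate the aggregation term $X$ using property~2 of $V_{s,\epsilon}$, the pointwise splitting with $l=\tfrac{n+m-1}{2}$ (legitimate since $n_k\ge m+1>m-1$), Proposition~\ref{propnonlocal}, H\"older and Gagliardo--Nirenberg, arriving at
\[
\partial_t\!\int u^n\,dx+c\!\int u^n\,dx\le Cn\Bigl(\int u^l\,dx\Bigr)^{\!\gamma'}+Cn^{c_n+1}\Bigl(\int u^l\,dx\Bigr)^{\!\theta'} .
\]
The exponent bookkeeping is if anything easier than for $m<2$: the scaling-critical $\theta(n)<2$ holds automatically (it reduces to $s>0$), the inequality $\alpha>2-2s$ still holds iff $s>\tfrac12$ so the fractional Gagliardo--Nirenberg step remains available, and for $n_k=(m-1)+2^k$ the identity $\tfrac{(m-1)+2^{k-1}}{2^k+2(m-1)}=\tfrac12$ gives, with $l_k:=n_{k-1}=\tfrac{n_k+m-1}{2}$,
\[
l_k\bigl(2-\theta(n_k)\bigr)\ \ge\ \frac{2d(m-2)}{d+2}\ \ge\ m-2\qquad(d\ge3),
\]
whence $\theta'(n_k)=2-\tfrac{2(m-2)}{l_k(2-\theta(n_k))}\in[\tfrac{d-2}{d},2]$ and likewise $\gamma'(n_k)\in(0,2)$. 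Feeding this into a minor variant of Lemma~\ref{iteration} (the sequence now starts at $n_0=m$, but $n_k\sim 2^k$ and the inputs $\sup_tB_0<\infty$, $\sup_kB_k(0)<\infty$ are unchanged) gives $\sup_t\|u_\epsilon\|_{n_k}\le C$ for all $k$, hence $\sup_t\|u_\epsilon\|_p\le C$ for every finite $p$, with $C$ independent of $p,t,\epsilon$.

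The only technical wrinkle is that for $m>4$ the first few indices have $2(n_k+1-l_k)/l_k<1$; in those steps the Gagliardo--Nirenberg factor $\|u^{n+1-l}\|_2=\|u\|_{2(n+1-l)}$ should be estimated directly by the Sobolev embedding of $u^l\in\dot{H}^1$ together with interpolation against the conserved mass, rather than through an ``$L^p$ norm'' of $u^l$ with $p<1$; this leaves the exponent count unchanged. The main obstacle, then, is the base-case $L^m$ bound: it cannot be produced by the differential-inequality machinery and forces the Lyapunov detour above, which is also the only place the hypothesis $m\ge2$ (together with subcriticality) is really felt --- everything downstream is the iteration of Theorem~\ref{propm1} re-centered at $n_0=m$.
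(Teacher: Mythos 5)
Your route is genuinely different from the paper's. The paper does not re-center the Moser iteration at a higher base exponent; instead it iterates $L^{n}$ norms of the \emph{truncation} $u_1:=(u-1)_+$, writing $u=u_1+\tilde u_1$ with $\tilde u_1=\min\{u,1\}$. This makes the base case trivial ($\|u_1\|_1\le\|u_0\|_1$), exploits $m\ge2$ to keep the diffusion term strong on $\{u>1\}$ (since $u^{m-1}\ge(u_1+1)^{m-1}\gtrsim u_1^{m-2}+1$, giving both $\|\nabla u_1^{n/2}\|_2^2$ and $\|\nabla u_1^{(n+1)/2}\|_2^2$), and disposes of the low part by the elementary bound $\|V_{s,\epsilon}*\tilde u_1\|_\infty\le C$ which uses precisely $s>\tfrac12$ and $\tilde u_1\in L^1\cap L^\infty$. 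The simple doubling $n=2^k$ then runs without any fixed-point obstruction because the recursion is on $A_k=\int u_1^{2^k}$, not $\int u^{n_k}$.

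Your replacement base-case argument has a concrete gap. After $\mathcal F_\epsilon[u(t)]\le\mathcal F_\epsilon[u_0]$ you write ``$\frac1{m-1}\int u_\epsilon(t)^m\le C+\tfrac12\iint K_s u_\epsilon u_\epsilon$,'' i.e.\ you drop the term $\epsilon\int u(t)\log u(t)$. But that term can be arbitrarily negative: $\int u\log u$ is bounded below only modulo a moment bound (e.g.\ $\int u\log u\ge -C(1+\int u|x|\,dx)$), and no moment of $u_0$ beyond $L^1\cap L^\infty$ is assumed anywhere in the paper, nor is moment propagation established for \eqref{eqn:approx}. Without it, the inequality you need is $\frac1{m-1}\int u^m\le C+\tfrac12\iint K_{s,\epsilon}uu-\epsilon\int u\log u$, and the last term is an uncontrolled $+\epsilon\,\int u\log(1/u)$ on $\{u<1\}$, potentially unbounded in $t$ and not obviously $O(1)$ uniformly in $\epsilon$. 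So the ``Lyapunov detour'' does not close as written; you would have to add moment hypotheses and a moment-propagation lemma to the statement, which the paper's truncation trick avoids entirely. A secondary, smaller issue is the acknowledged $m>4$ wrinkle, where $\|u^{n+1-l}\|_2$ is not an $L^p$ norm of $u^l$ with $p\ge1$; you sketch a fix but it is not carried out, whereas in the paper the analogous quantity is always a genuine $L^p$ norm of a power of $u_1$.

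If you want to keep your re-centering idea, the clean way to supply the base case is exactly the paper's truncation: multiplying \eqref{eqn:approx} by $u_1^{m-1}$ gives the uniform $L^m$ bound on $u_1$ (hence on $u$) for free, because $\|u_1\|_1$ is controlled by the mass and the $\tilde u_1$ contribution to the drift is bounded. At that point, however, you are already running the paper's argument, and the free-energy detour buys you nothing beyond what the truncation gives more cheaply and under weaker hypotheses.
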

\begin{proof}
Denote $u_j=\max\{u-j,0\}$, $\tilde{u}_j=\min\{u,j\}$ and so $u=u_j+\tilde{u}_j$. For some $n\geq 2$, let us multiply $u_1^{n-1}$ on both sides of \eqref{eqn:approx}. We have
\[\partial_t\int_{\mathbb{R}^d} u_1^n dx=
n\int_{\mathbb{R}^d} u_1^{n-1}u_tdx
\leq-mn\int_{\mathbb{R}^d} u^{m-1}\nabla u \nabla u_1^{n-1}dx+n\underbrace{\int_{\mathbb{R}^d}\l  V_{s,\epsilon}*u\r u\nabla u_1^{n-1}dx}_{X:=}.\]
Since
\[\int_{\mathbb{R}^d} u^{m-1}\nabla u \nabla u_1^{n-1}dx=\int_{\mathbb{R}^d} (u_1+1)^{m-1}\nabla u_1 \nabla u_1^{n-1}dx\]
\[\geq \frac{C_m}{n}\int_{\mathbb{R}^d}\left|\nabla u_1^{\frac{n+1}{2}}\right|^2+\left|\nabla u_1^{\frac{n}{2}}\right|^2dx\]
for some $ C_m>0 $ bounded from below for all $m\geq 2$, we obtain
\begin{equation}\label{ineq n}
\partial_t\int_{\mathbb{R}^d} u_1^n dx\leq
- C_m \int_{\mathbb{R}^d}\left|\nabla u_1^{\frac{n+1}{2}}\right|^2+\left|\nabla u_1^{\frac{n}{2}}\right|^2dx+nX.
\end{equation}

Let us now estimate $X$:
\begin{align}X&= \int_{\mathbb{R}^d}  V_{s,\epsilon}*u_1 u\nabla u_1^{n-1}dx+ \int_{\mathbb{R}^d} V_{s,\epsilon}*\tilde{u}_1\;u\nabla u_1^{n-1}dx\nonumber\\
&\lesssim \int_{\mathbb{R}^d}  V_{s,\epsilon}*u_1\nabla u_1^{n}dx+\int_{\mathbb{R}^d}  V_{s,\epsilon}*u_1\nabla u_1^{n-1}dx+\int_{\mathbb{R}^d}  V_{s,\epsilon}*\tilde{u}_1\;u\nabla u_1^{n-1}dx\nonumber\\
&=:Y_n+Y_{n-1}+X_{1}.\label{ineq:YYX}
\end{align}
We will first consider $X_{1}$. By the fact that \[s>\frac{1}{2},\; \tilde{u}_1\leq 1,\; \tilde{u}_1\in L^1 \text{ and } | V_{s,\epsilon}(x)|\lesssim |x|^{-d-1+2s},\] we have
\[ \left| V_{s,\epsilon}*\tilde{u}_1\right|(x)\lesssim\int_{\mathbb{R}^d}|x-y|^{-d-1+2s}\tilde{u}_1(y)dy\leq C\int_{\mathbb{R}^d} \tilde{u}_1 dy+\int_{|x-y|\leq 1} |x-y|^{-d-1+2s}dy\leq C.\]
Then for any small $\delta>0$
\begin{align}
\nonumber
X_{1}&= C\int_{\mathbb{R}^d} u\left|\nabla u_1^{n-1}\right|dx\lesssim \int_{\mathbb{R}^d} u\; u_1^{\frac{n}{2}-1}\left|\nabla u_1^\frac{n}{2}\right|dx\\ \nonumber
&\leq C_\delta n\int_{\mathbb{R}^d} \l u_1^n +u_1^{n-2}\r dx+\frac{\delta}{n}\|\nabla u_1^\frac{n}{2}\|_2^2\\
\label{ineq:X1'}
 &\leq C_\delta n\int_{\mathbb{R}^d}  u_1^n  dx+C_\delta n+\frac{\delta}{n}\|\nabla u_1^\frac{n}{2}\|_2^2.   
\end{align}
In the last inequality \eqref{ineq:X1'}, we applied
\[\int_{\mathbb{R}^d} u_1^{n-2}dx\leq \int_{u_1\geq 1} u_1^n dx+\int_{1\leq u\leq 2} 1 dx\leq \left\|u_1^\frac{n}{2}\right\|_2^2+1.\]

Next by Gagliardo-Nirenberg and Young's inequalities
\begin{equation}\label{u1n2}
   C_\delta\left\|u_1^\frac{n}{2}\right\|^2_{2}\leq C_\delta C_\alpha\left\|\nabla u_1^\frac{n}{2}\right\|_2^{2\alpha}\left\|u_1^\frac{n}{2}\right\|_1^{2(1-\alpha)} \leq C_{\delta}'n^{d}\left\|u_1^\frac{n}{2}\right\|^2_1+\frac{\delta}{n^2} \left\|\nabla u_1^\frac{n}{2}\right\|^2_2
\end{equation}
where we picked
\[\alpha={\frac{1}{2}}\Big/\l{\frac{1}{2}+\frac{1}{d}}\r.\]
So by \eqref{ineq:X1'}, for some universal small $\delta>0$ 
\begin{equation}\label{ineq:X_1}X_1\leq C_\delta n+C_{\delta}' n^{d+1}\left\|u_1^\frac{n}{2}\right\|^2_1+\frac{\delta}{n} \left\|\nabla u_1^\frac{n}{2}\right\|^2_2.\end{equation}

For $Y_l$ with $l=n-1,n$, as proved before (in Theorem \ref{propm1})
\[Y_l\lesssim \iint_{\mathbb{R}^{2d}}\frac{\left(u_1^{\frac{l+1}{2}}(x)-u_1^{\frac{l+1}{2}}(y)\right)^2}{|x-y|^{d+2-2s}}dxdy\lesssim \int_{\mathbb{R}^d} \nabla(-\Delta)^{-s}u_1^{\frac{l+1}{2}}\nabla u_1^\frac{l+1}{2}dx.\]
By Fourier transformation and H\"{o}lder's inequality, 
\begin{align}\nonumber
Y_l &\lesssim \int_{\mathbb{R}^d} \left|\xi\right|^{2-2s}\left|\widehat{u_1^{\frac{l+1}{2}}}\right|^2 d\xi\lesssim \l\int_{\mathbb{R}^d}|\xi|^2\left|\widehat{u_1^\frac{l+1}{2}}\right|^2d\xi\r^{1-s}\l\int_{\mathbb{R}^d} \left|\widehat{u_1^\frac{l+1}{2}}\right|^2d\xi\r^{s}\\\nonumber
&\lesssim \left(\int_{\mathbb{R}^d}\left|\nabla u_1^{\frac{l+1}{2}}\right|^2dx\right)^{1-s}\left(\int_{\mathbb{R}^d} u_1^{l+1}dx\right)^{s}\\\nonumber
&\leq C_\delta n^{\frac{1-s}{s}}\left\|u_1^\frac{l+1}{2}\right\|_2^2+\frac{\delta}{n}\left\|\nabla u_1^\frac{l+1}{2}\right\|_2^2\\\label{ul1}
&\leq C_\delta n\left\|u_1^\frac{l+1}{2}\right\|_2^2+\frac{\delta}{n}\left\|\nabla u_1^\frac{l+1}{2}\right\|_2^2.\end{align}
We used $s> \frac{1}{2}$ in the last inequality.

When $l=n-1$, by \eqref{ul1}\eqref{u1n2}, we have for some $C$ only depending on $\delta$
\[Y_{n-1}\leq C n^{d+1}\left\|u_1^\frac{n}{2}\right\|_1^2+\frac{\delta}{n}\left\|\nabla u_1^\frac{n}{2}\right\|_2^2.\]

When $l=n$, as done previously
\[Y_{n}\leq C n^{d+1}\left\|u_1^\frac{n+1}{2}\right\|_1^2+\frac{\delta}{n}\left\|\nabla u_1^\frac{n+1}{2}\right\|_2^2.\]
By Gagliardo-Nirenberg,
\[\left\|u_1^\frac{n+1}{2}\right\|^2_1=\left\|u_1^\frac{n}{2}\right\|^{2\frac{n+1}{n}}_{\frac{n+1}{n}}\leq C\left\|\nabla u_1^\frac{n}{2}\right\|_2^{2\beta_1}\left\|u_1^\frac{n}{2}\right\|_1^{2\beta_2} 
\]
where \[\beta_1=\beta_1(n)=\frac{1}{n}\Big/\l{\frac{1}{2}+\frac{1}{d}}\r,\quad \beta_2=\beta_2(n)=\frac{n+1}{n} -\beta_1. \]
By Young's inequalities
\begin{align*}
    C\left\|\nabla u_1^\frac{n}{2}\right\|_2^{2\beta_1}\left\|u_1^\frac{n}{2}\right\|_1^{2\beta_2} 
    &\leq C( \frac{\epsilon^p\|\nabla u_1^\frac{n}{2}\|^{2\beta_1 p}_2}{p}+\frac{\|u_1^\frac{n}{2}\|_1^{2\beta_2q}}{\epsilon^qq})\\
    &\leq C_{\delta}n^{c_n}\left\|u_1^\frac{n}{2}\right\|^{\gamma_n}_1+\frac{\delta}{n^{d+2}} \left\|\nabla u_1^\frac{n}{2}\right\|^2_2
\end{align*}
where we pick \[p=\frac{1}{\beta_1}\sim n,\quad q=\frac{p}{p-1},\quad C\epsilon^p/p=\frac{\delta}{n^{d+2}}.\] Thus $\epsilon^p\sim \frac{\delta}{n^{d+1}}$ and since $\frac{-q}{p}=-\frac{1}{p-1}=-\frac{\beta_1}{1-\beta_1}$,
we have $\epsilon^{-q}= C_\delta n^{\frac{\beta_1(d+1)}{1-\beta_1}}$. We have 
\[\gamma_n=\frac{2\beta_2}{1-\beta_1}=2+\frac{1}{n}\frac{2}{(1-\beta_1)}\;\text{ and }\; c_n=\frac{\beta_1(d+1)}{1-\beta_1}.\]
It is not hard to check that for all $n\geq 2$, $\beta_1(n)\leq \beta_1(2)<1$. And so $c_n$ is uniformly bounded for all $n\geq 2$. So we proved that for any small $\delta>0$
\[Y_n\leq C_\delta n^{c_n}\left\|u_1^\frac{n}{2}\right\|^{\gamma_n}_1+\frac{\delta}{n} \left\|\nabla u_1^\frac{n}{2}\right\|^2_2+\frac{\delta}{n} \left\|\nabla u_1^\frac{n+1}{2}\right\|^2_2.\]

Combining with \eqref{ineq:YYX} and \eqref{ineq:X_1}, for some $c$ $(=c_n+d+1)>0$ uniformly bounded for all $n\geq 2$
\[
nX\leq C_\delta n^2+C_\delta n^c\l\left\|u_1^\frac{n}{2}\right\|^2_1+\left\|u_1^\frac{n}{2}\right\|^{{\gamma_n}}_1\r+\delta\left\|\nabla u_1^\frac{n}{2}\right\|_2^2+\delta\left\|\nabla u_1^\frac{n+1}{2}\right\|_2^2.\]
Therefore by \eqref{ineq n}
\begin{equation}\label{eqn:diffu1}\frac{d}{dt}\left\|u_1^n\right\|_1+\l\left\|\nabla u_1^\frac{n}{2}\right\|_2^2+\left\|\nabla u_1^\frac{n+1}{2}\right\|_2^2\r\lesssim n^2+n^c\l \left\|u_1^\frac{n}{2}\right\|^2_1+\left\|u_1^\frac{n}{2}\right\|^{{\gamma_n}}_1\r. \end{equation}

Again by Galiardo-Nirenberg inequality and Young's inequality
\[\left\|u_1^\frac{n}{2}\right\|_{2}\lesssim \left\|\nabla u_1^\frac{n}{2}\right\|^\theta_2\left\|u_1^\frac{n}{2}\right\|_1^{1-\theta}+\left\|u_1^\frac{n}{2}\right\|_1\lesssim
\left\|\nabla u_1^\frac{n}{2}\right\|_2+\left\|u_1^\frac{n}{2}\right\|_1 \]
where $\theta=\frac{1}{2}/\l\frac{1}{2}+\frac{1}{d}\r$. So for some universal $C,c>0$
\begin{equation}\label{noninter3}
\left\|\nabla u_1^\frac{n}{2}\right\|^2_2\geq C\left\|u_1^\frac{n}{2}\right\|_2^2-c\left\|u_1^\frac{n}{2}\right\|_{1}^2.
\end{equation} 
By \eqref{eqn:diffu1}, \eqref{noninter3}, we have
\[\frac{d}{dt}\left\|u_1^n\right\|_1+\left\|u_1^n\right\|_1 \lesssim n^2+n^c\left\|u_1^\frac{n}{2}\right\|^{{\gamma_n}}_1. \]
Recall here $\gamma_n\leq 2+\frac{C}{n}$.

Now we let $n=2^k$ for $k=1,2..$ and $A_k=\int_{\mathbb{R}^d} u_1^{n_k}dx$. By Lemma \ref{iteration}, $u_1(x,t)$ is uniformly bounded for all $t\geq 0$ and so is $u(x,t)$.
\end{proof}


Now we turn to the case when $s\in (0,\frac{1}{2}]$ and $m\in (2-2s/d,2)$.

\begin{theorem}\label{thm:s1/2}
Theorem \ref{propm1} holds in the regime:  $s\in (0,\frac{1}{2}]$, $m\in (2-2s/d,2)$.
\end{theorem}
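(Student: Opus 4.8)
The plan is to keep the architecture of the proof of Theorem \ref{propm1}: test \eqref{eqn:approx} against $u^{n-1}$ for $n\geq 3-m$, drop the nonpositive contribution coming from $\epsilon\Delta u$, and obtain
\[
\partial_t\int_{\mathbb{R}^d}u^n\,dx\ \leq\ -C_m\Big\|\nabla u^{\frac{n+m-1}{2}}\Big\|_2^2+(n-1)X,\qquad X:=\int_{\mathbb{R}^d}V_{s,\epsilon}*u\,\nabla u^n\,dx,
\]
with $C_m>0$ independent of $n$; then, exactly as in Theorem \ref{propm1}, write $X=\int(-\nabla\cdot V_{s,\epsilon})*u\,u^n\,dx$ and use $|\nabla\cdot V_{s,\epsilon}(x)|\leq C|x|^{-d-2+2s}$, $\int\nabla\cdot V_{s,\epsilon}=0$ and the monotonicity of $t\mapsto t^n$ to get $|X|\leq C\iint\frac{(u(x)-u(y))(u^n(x)-u^n(y))}{|x-y|^{d+2-2s}}\,dx\,dy$, and finally the elementary pointwise inequality with $l:=\frac{n+m-1}{2}$ to reach $|X|\leq C\,\mathcal{B}_{1-s}(u^l,u^{n+1-l})$. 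The aim is to turn this into an iterative differential inequality $\frac{d}{dt}A_{k+1}+cA_{k+1}\leq C^k+C^kA_k^{2+O(2^{-k})}$ along $n_k=2^k(2-m)+m-1$, $A_k=\int u^{n_k}$, and then invoke Lemma \ref{iteration}.

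The step that must differ from Theorem \ref{propm1} is the estimate of $\mathcal{B}_{1-s}(u^l,u^{n+1-l})$. Since $s\leq\frac12$, this bilinear form carries $2-2s\geq1$ derivatives, so the fractional Gagliardo--Nirenberg step of Theorem \ref{propm1} (valid only when $\alpha>2-2s$, i.e. $s>\frac12$) is unavailable; equivalently, $|\nabla K_s(x)|\sim|x|^{-d-1+2s}$ is no longer locally integrable. Instead I would apply Proposition \ref{propnonlocal} to rewrite
\[
\mathcal{B}_{1-s}(u^l,u^{n+1-l})=C\iint\frac{\nabla u^l(x)\cdot\nabla u^{n+1-l}(y)}{|x-y|^{d-2s}}\,dx\,dy=C\int_{\mathbb{R}^d}\nabla u^l\cdot\nabla(-\Delta)^{-s}u^{n+1-l}\,dx,
\]
which is a convolution against the Riesz kernel $|z|^{-d+2s}$, and this kernel \emph{is} locally integrable because $d-2s<d$. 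Here $m<2$ is decisive: since $2l=n+m-1$ we have $n+1-l=l+(2-m)$, hence $\nabla u^{n+1-l}=\frac{n+1-l}{l}\,u^{\,2-m}\nabla u^l$ with the \emph{nonnegative} weight $u^{2-m}$ (for $m\geq2$ this would be a negative power of $u$, singular on the degeneracy set, which is why $\{m\geq2,\,s\leq\frac12\}$ is left open). Thus, by Cauchy--Schwarz and Plancherel's identity, $|X|\lesssim\|\nabla u^l\|_2\,\|\,|\nabla|^{1-2s}u^{n+1-l}\|_2$; estimating the second factor by the fractional Gagliardo--Nirenberg inequality of Lemma \ref{gagliardo} (now of order $1-2s<1$) together with the H\"older bound $\|u^{2-m}\nabla u^l\|_r\leq\|u^{2-m}\|_{r_1}\|\nabla u^l\|_2$ produces
\[
|X|\ \lesssim\ n^{C}\,\|\nabla u^l\|_2^{\,1+a(n)}\,\|u^{2-m}\|_{r_1}^{\,a(n)}\,\|u^{n+1-l}\|_q^{\,1-a(n)},
\]
with exponents $r,r_1,q$ chosen so as not to break the scaling \eqref{def:ur}. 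Crucially $a(n)<1$ --- so the total power $1+a(n)$ of $\|\nabla u^l\|_2$ stays below $2$ --- for every $n\geq 3-m$ exactly because $m>2-\frac{2s}{d}$ (at the base exponent $n=3-m$ one may take $r=\frac{2}{3-m}$, $q=1$, which gives $a=\frac{d+2-4s}{d(m-1)+2}$, and $a<1\iff m>2-\frac{4s}{d}$, weaker than subcriticality).

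To close the argument I would absorb $\|\nabla u^l\|_2^{1+a(n)}$ into $-C_m\|\nabla u^l\|_2^2$ by Young's inequality, and reduce the surviving factors $\|u^{2-m}\|_{r_1}$, $\|u^{n+1-l}\|_q$ --- both $L^p$-norms of $u$ of order at most $2l=n+m-1$ --- to powers of $A_k$ and a further $O(1/n)$-power of $\|\nabla u^l\|_2$ (also absorbed) by interpolating between $\|u\|_1=1$ and $\|u^l\|_2\lesssim\|\nabla u^l\|_2^{\gamma}\|u^l\|_1^{1-\gamma}$ via Lemma \ref{gagliardo}, as in \eqref{Xn}--\eqref{leftbound}; combined with the coercivity bound \eqref{leftbound} this yields the stated recursion, and Lemma \ref{iteration} then gives $\|u_\epsilon\|_n(t)\leq C$ for all $n$ and $t$. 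The main obstacle is precisely this exponent bookkeeping in the estimate of $\nabla(-\Delta)^{-s}u^{n+1-l}$: one must choose $r$ (hence $r_1$), $q$ and, through Lemma \ref{gagliardo}, the resulting $a(n)$ so that simultaneously (i) $1+a(n)<2$ for all $n\geq 3-m$ --- which is exactly where subcriticality enters, at $n=3-m$ --- and (ii) only norms $\|u\|_p$ with $p\leq n+m-1$ appear, with powers reducing to $A_k^{\,2+O(1/n)}$, so that Lemma \ref{iteration} applies; compatibility with the scaling \eqref{def:ur} is the guide for these choices. The endpoint $s=\frac12$ is only marginally different: there $1-2s=0$, so the fractional Gagliardo--Nirenberg step degenerates to an ordinary interpolation of $u^{n+1-l}$ in $L^2$.
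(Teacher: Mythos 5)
Your proposal follows the same architecture as the paper's proof: test against $u^{n-1}$, reduce the drift term to $\mathcal{B}_{1-s}(u^l,u^{n+1-l})$ via Proposition~\ref{propnonlocal} and the elementary monotonicity inequality, estimate by fractional Gagliardo--Nirenberg together with the chain-rule identity $\nabla u^{n+1-l}=\frac{n+1-l}{l}u^{2-m}\nabla u^l$ (which is where $m<2$ enters, exactly as you say), and then run the doubling iteration $n_{k+1}=2n_k+1-m$ into Lemma~\ref{iteration}. Where you genuinely diverge from the paper is in how the $2-2s$ derivatives are distributed and how Young/H\"older exponents are chosen: the paper writes $\int(-\Delta)^{1-s}u^l\,u^{n+1-l}\lesssim\int\bigl||\nabla|^{1-2s}u^l\bigr|\bigl||\nabla|u^{n+1-l}\bigr|$ and then applies Young with the scaling-balanced $p(n),q(n)$ of \eqref{def:pq}, whereas you put the full gradient on $u^l$, the fractional $1-2s$ on $u^{n+1-l}$, and use Cauchy--Schwarz ($p=q=2$). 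Both distributions preserve the scaling \eqref{def:ur} at the level of the product; the paper's choice balances each factor individually so that the subsequent G--N interpolations are cleaner, while yours pushes the imbalance into the interpolation of the extra weight $\|u^{2-m}\|_{r_1}$.

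This is also where your bookkeeping goes astray. You assert that subcriticality enters through $1+a(n)<2$, and you compute at $n=3-m$ that $a<1\iff m>2-\frac{4s}{d}$ --- correctly, but this is \emph{not} where the constraint bites, and you yourself note it is weaker than subcriticality without resolving the tension. After you also interpolate $\|u^{2-m}\|_{2/(2-m)}=\|u\|_2^{2-m}$ and $\|u^{n+1-l}\|_q$ back to $\|u^l\|_1$ and $\|\nabla u^l\|_2$ via Lemma~\ref{gagliardo}, those factors add further powers of $\|\nabla u^l\|_2$, and the \emph{total} exponent is
$\Theta=1+a+\frac{d(2-m)}{d+2}\bigl(2-a\bigr)$ at $n=3-m$, $l=1$; a direct substitution of $a=\frac{d+2-4s}{d(m-1)+2}$ gives $\Theta=2$ precisely at $m=2-\frac{2s}{d}$, and $\Theta<2$ for $m>2-\frac{2s}{d}$. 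So subcriticality does enter exactly as it must, but through $\Theta<2$, not through $a<1$ --- and the order of operations matters: you must first convert all surviving $L^p$-norms of $u$ into $\|u^l\|_1$ and $\|\nabla u^l\|_2$ \emph{before} absorbing by Young, rather than absorbing $\|\nabla u^l\|_2^{1+a}$ first and handling the remaining factors afterwards. With that fix your route closes and yields the same recursion as the paper, so the approach is viable, but the step you flagged as the ``main obstacle'' is precisely the one you left both incomplete and with the wrong intermediate conclusion, whereas in the paper the choice \eqref{def:pq} makes the verification $\alpha p<2\iff m>2-\frac{2s}{d}$ a one-line computation.
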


\begin{proof}

For all $n\geq 3-m$, let $l=\frac{m+n-1}{2}$. Multiplying $u^{n-1}$ on both sides of \eqref{eqn:approx}, we obtain
\begin{equation}\label{ineq n3}
\partial_t \int_{\mathbb{R}^d} u^{n}dx\leq 
-C_m\int_{\mathbb{R}^d}|\nabla u^l|^2dx+C n\int_{\mathbb{R}^d}  V_{s,\epsilon}*u \nabla u^{n}dx.
\end{equation}
By Proposition \ref{propnonlocal}, properties of fractional derivatives and Young's inequality, for any $p>1,q>1$ satisfying $\frac{1}{p}+\frac{1}{q}=1$
\begin{align*}
    &\quad\int_{\mathbb{R}^d}  V_{s,\epsilon}*u \nabla u^{n}dx=\int_{\mathbb{R}^d} (-\nabla\cdot  V_{s,\epsilon})*u \; u^{n}dx\\
    &\lesssim \int_{\mathbb{R}^d} \nabla (-\Delta)^{-s}u \nabla u^{n}dx\lesssim \int_{\mathbb{R}^d} \nabla (-\Delta)^{-s}u^{l} \nabla u^{n+1-l}dx\\
&\lesssim \int_{\mathbb{R}^d} \left( (-\Delta)^{1-s}u^{l} \right) u^{n+1-l}dx\lesssim \int_{\mathbb{R}^d} \left||\nabla|^{1-2s}u^{l} \right| \left||\nabla|u^{n+1-l} \right| dx\\
&\lesssim n^{p/q}\left\||\nabla|^{1-2s} u^l \right\|_{p}^p+\frac{1}{n}\left\| |\nabla|u^{n+1-l}\right\|_q^q\\
&:=n^{p/q}X^p_1+\frac{1}{n}X_2.
\end{align*}

To choose $p$ and $q$, we rescale the solution by setting $u_r=r^d u(rx,t)$. Then 
\[\left\||\nabla|^{1-2s} u_r^l \right\|_{p}^p=\left(r^{1-2s}r^{dl}\right)^p\left\||\nabla|^{1-2s} u^l \right\|_{p}^p;\]
\[\left\| |\nabla|u_r^{n+1-l}\right\|_q^q=\left(r r^{d(n+1-l)}\right)^q \left\| |\nabla|u^{n+1-l}\right\|_q^q.\]
To match the scaling, we want $(1-2s+dl)p=(1+d(n+1-l))q$ in the case when $m=2-\frac{2s}{d}$. And using $\frac{1}{p}+\frac{1}{q}=1$, we find
\begin{equation}\label{def:pq}p(n)=\frac{2+d(m+n-1)}{1+d(m+l-2)},\,q(n)=\frac{2+d(m+n-1)}{1+d(n+1-l)}.\end{equation}
These are the values we pick for $p,q$. When $n=3-m,l=1$, we obtain
\begin{equation}\label{def:pqsimple}p(3-m)=\frac{2+2d}{1+d(m-1)},\,q(3-m)=\frac{2+2d}{1+d(3-m)}.\end{equation}
While as $n\to\infty$ ($l=\frac{n+m-1}{2}$), $p(n)$ is monotonically decreasing, $q(n)$ is monotonically increasing and
\[p(n)\to 2,\, q(n)\to 2.\]
Also it is not hard to see that
\[p(n)>1,\,q(n)>1,\,1\leq\frac{p(n)}{q(n)}\leq  \frac{1+d(3-m)}{1+d(m-1)}=:c_1(d,m),\]
\begin{equation}
    \label{con:pn}
    p(n)-2=\frac{2d(2-m)}{1+d(m+l-2)}\sim \frac{1}{n},
\end{equation}
\begin{equation}
    \label{con:pqn}
    2-q(n)=\frac{2d(2-m)}{1+d(n+1-l)}\sim \frac{1}{n}.
\end{equation}

By Lemma \ref{lem:absnabla} and Young's inequality, for any $\delta\in(0,1)$
\begin{align*}
    X_2&\leq C\left\| \nabla u^{n+1-l}\right\|_q^q= C' \int_{\mathbb{R}^d} u^{(n+1-2l)q}\left|\nabla u^l\right|^q dx\\
&\leq C_\delta n  \left\|u^{(n+1-2l)q}\right\|_{\frac{2}{2-q}}^{\frac{2}{2-q}}+{\delta}\left\|\left|\nabla u^l\right|^q\right\|_{\frac{2}{q}}^{\frac{2}{q}}\\
&= C_\delta  n
\left\|u^{l}\right\|^2_{2}+{\delta}\left\|\nabla u^l\right\|^2_2.
\end{align*}
In the last inequality, we used \eqref{con:pqn}.
Now by Gagliardo-Nirenberg interpolation inequality
\[C_\delta
\left\|u^{l}\right\|^2_{2}\leq C_\delta \left\|\nabla u^{l}\right\|^{2\beta}_{2}\left\|u^{l}\right\|^{2(1-\beta)}_{1}\leq \frac{\delta}{n} \left\|\nabla u^{l}\right\|^{2}_{2}+C_\delta n^{c_0}\left\|u^{l}\right\|^{2}_{1}\]
where
\[\beta=\frac{1}{2}\big/(\frac{1}{2}+\frac{1}{d}),\, c_0=\frac{\beta}{1-\beta}=\frac{d}{2}.\]
So
\begin{equation}
    \label{ineq:X2}
    X_2\leq C_\delta  n^{c_0+1}
\left\|u^{l}\right\|^2_{1}+2{\delta}\left\|\nabla u^l\right\|^2_2.
\end{equation}

\medskip

Next for $X_1$, again by Gagliardo-Nirenberg interpolation inequality
\[X_1=\left\||\nabla|^{1-2s} u^l\right\|_{p}\lesssim \left\|\nabla u^l\right\|^\alpha_2\left\|u^l\right\|^{1-\alpha}_{1}
\]
where we need to put
\[
\alpha=\alpha(n)=\l{\frac{1-2s}{d}-\frac{1}{p}+1}\r\Big/{\l\frac{1}{d}+\frac{1}{2}\r}.\]
It is not hard to check that $s< \alpha(n)<1$ uniformly for all $n\geq 3-m$. Moreover, we claim that $\sup_{n\geq 3-m}\alpha(n) p(n)<2$.
We only need to check when $n=(3-m)$ by monotonicity of $\alpha(n)p(n)$ in $n$.  By \eqref{def:pqsimple} and direct calculations, 
\[\alpha \frac{2+2d}{1+d(m-1)}<2 \iff m>2-\frac{2s}{d}.\] 
With this, we obtain 
\begin{equation}
    \label{ineq:X1}
    X_1^p\lesssim \frac{\delta}{n^{c_1+1}}\|\nabla u^l\|_2^2+C_\delta n^{c_2}\|u^l\|^{2+\gamma}_1
\end{equation}
where $c_2$ is a constant that
\[c_2\geq (c_1+1)\frac{\alpha(1-\alpha)p^2}{2-\alpha p}\]
and by \eqref{con:pn}
\begin{equation}
    \label{def:gamma}
    \gamma=\gamma(n)=\frac{2(p-2)}{2-\alpha p}\sim \frac{1}{n}.
\end{equation}

Putting together \eqref{ineq:X2} and \eqref{ineq:X1} shows
\begin{align*}
    &\quad n\int  V_{s,\epsilon}*u \nabla u^{n}dx\leq n^{c_1+1}X_1^p+X_2\\
&\leq C_\delta n^{c_2+c_1+1}\|u^l\|^{2+\gamma}_1+C_\delta  n^{c_0+1}
\left\|u^{l}\right\|^2_{1}+3{\delta}\left\|\nabla u^l\right\|^2_2.
\end{align*}
Picking $\delta$ small enough, \eqref{ineq n3} shows for $c^*=\max\{c_2+c_1+1,c_0+1\}$
\begin{equation}\label{ineq:l}
\partial_t \int_{\mathbb{R}^d} u^{n}dx+\int_{\mathbb{R}^d}|\nabla u^l|^2dx\leq 
C_\delta n^{c^*}\left\|u^l\right\|_1^{2+\gamma}.
\end{equation}

As done in \eqref{leftbound}, for some $\gamma'\in (0,2)$ 
\[
\left\|u^n\right\|_1\lesssim  \left\|\nabla u^l\right\|_2^2+\left\| u^l \right\|_1^{\gamma'}\lesssim \left\|\nabla u^l\right\|_2^2+\left\| u^l \right\|_1^{2+\gamma}+1.
\]
To conclude, we find out that
\[\frac{d}{dt}\left\|u^n\right\|_1+\left\|u_1^n\right\|_1\lesssim  1+n^{c^*}\left\|u^l\right\|_1^{2+\gamma}\]
where $c^*>0$ only depends on $s,d,m$.

Finally as in Theorem \ref{propm1},
let $n=n_{k+1}$ where $\{n_{k+1}\}$ is defined in \eqref{sequence} for $k\geq0$, and then $l$ becomes $n_k$. By considering $A_k=\int_{\mathbb{R}^d} u^{n_k}dx$, we conclude the proof after applying Lemma \ref{iteration}.

\end{proof}

Now we proceed to the case when $s\in (1,\frac{d}{2})$.

\begin{theorem}\label{thm:large s}
Theorem \ref{propm1} holds in the regime:  $s\in (1,\frac{d}{2})$, $m>2-2s/d$.
\end{theorem}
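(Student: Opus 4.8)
The plan is to run, for each fixed $\epsilon>0$, the iteration scheme of Theorems \ref{propm1}--\ref{thm:s1/2}, after normalizing $\|u_0\|_1=1$ and splitting into the subcases $2-2s/d<m<2$ and $m\ge2$; in the latter one works, as in Theorem \ref{propm2}, with the truncations $u_1=\max\{u-1,0\}$ and $\tilde u_1=\min\{u,1\}$. Testing \eqref{eqn:approx} against $u^{n-1}$ (resp.\ $u_1^{n-1}$), discarding the non-positive $\epsilon$-contribution, and setting $l=\tfrac{m+n-1}{2}$, one gets for $n$ large
\[
\frac{d}{dt}\int u^{n}\ \le\ -C_m\int|\nabla u^{l}|^{2}\ +\ X,\qquad X:=(n-1)\int(V_{s,\epsilon}*u)\cdot\nabla u^{n},
\]
(with the obvious truncated analogue), and the whole problem is to absorb $X$ into the diffusion at the cost of a remainder of the form $Cn^{C}\|u^{l}\|_1^{\,2+O(1/n)}+Cn^{C}$.

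The new feature when $s>1$ is that the kernel $|x|^{-d-2+2s}$ is no longer integrable at spatial infinity (its exponent exceeds $-d$), so the device of Theorems \ref{propm1}--\ref{thm:s1/2}, namely rewriting $X$ through the bilinear form $\mathcal B_{1-s}$ and the fractional Laplacian $(-\Delta)^{1-s}$ acting on $u^{l}$, is unavailable: $\mathcal B_{1-s}(u^{l},u^{n+1-l})$ would diverge. Instead I would use only the pointwise bounds $|V_{s,\epsilon}(x)|\lesssim|x|^{-d-1+2s}$ and $|\nabla\cdot V_{s,\epsilon}(x)|\lesssim|x|^{-d-2+2s}$ (property~2 of $V_{s,\epsilon}$) together with Young's convolution inequality, Lemma \ref{lem:Young}. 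Splitting each kernel at radius one, the far part is bounded and decaying (here $s<d/2$ is used), so its convolution with $u$ is controlled by $\|u\|_1=1$; the near part lies in $L^{a}$ for a range of $a$ reaching strictly above $1$ (here $s>1$ is used), so its convolution with $u$ is controlled in suitable $L^{q}$-norms by $\|u\|_1$ and, at worst, one \emph{fixed} higher norm $\|u\|_{r}$ with $r=r(d,s)$, the latter interpolated between $\|u\|_1$ and the current norm $\|u\|_{n_k}$, i.e.\ by $A_k^{\lambda/n_k}$ for some $\lambda\in(0,1)$ (which is harmless, as it only perturbs the exponent of $A_k$ by $O(1/n_k)$). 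Feeding these into $X$ — either after integrating by parts and writing $u^{n}=u^{n-l}u^{l}$, or after writing $\nabla u^{n}=\tfrac{n}{l}u^{n-l}\nabla u^{l}$ and applying Cauchy--Schwarz — reduces it to controlling a lower-order factor such as $\int u^{\,n+1-m}=\|u^{l}\|_{(n+1-m)/l}^{(n+1-m)/l}$, whose exponent $(n+1-m)/l$ is $<2$ for $m>1$; this last factor is handled by the Gagliardo--Nirenberg inequality (Lemma \ref{gagliardo}) applied to $u^{l}$, so that $\|\nabla u^{l}\|_2$ appears with a power bounded away from $2$. Choosing all intermediate exponents so as not to break the scaling \eqref{def:ur}, exactly as the choice \eqref{def:pq}, one arrives after a final Young's inequality at
\[
X\ \le\ \delta\,\|\nabla u^{l}\|_2^{2}\ +\ C\,n^{C}\Big(\int u^{l}\Big)^{\,2+O(1/n)}\ +\ C\,n^{C}
\]
uniformly in $n$ large; in the truncated case one obtains in addition the $V_{s,\epsilon}*\tilde u_1$-terms, which are bounded because $\tilde u_1\le1$ and $\tilde u_1\in L^1$, just like the term $X_1$ in Theorem \ref{propm2}. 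Absorbing $\delta\|\nabla u^{l}\|_2^{2}$ into $-C_m\int|\nabla u^{l}|^{2}$ and applying Gagliardo--Nirenberg once more as in \eqref{leftbound} to replace what remains of the diffusion by $-c\int u^{n}+C\|u^{l}\|_1^{\gamma'}$, one gets along the sequence $n_{k+1}=2n_k+(1-m)$ (so $n_k\to\infty$; when $m\ge2$ one takes instead $n_k=2^k$) the inequality $\tfrac{d}{dt}A_{k+1}+cA_{k+1}\le C^{k}+C^{k}A_k^{\,2+O(2^{-k})}$ for $A_k=\int u^{n_k}$, whence Lemma \ref{iteration} gives the claimed uniform $L^{n}$-bound. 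As always, subcriticality $m>2-2s/d$ is exactly what forces the critical exponents at the bottom of the iteration to be strictly below $2$.

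The step I expect to be the main obstacle is the estimate of $X$. Lacking fractional differentiation, one must balance several distinct pieces — the near and the far part of the kernel $V_{s,\epsilon}$, and, when $m\ge2$, the two truncation contributions — against the single good quantity $\|\nabla u^{l}\|_2^{2}$, and each piece requires its own interpolation; this is where the three separate arguments enter the iterative step. The delicate points are that the far part decays only like $|x|^{-d-2+2s}$, which is slow when $s$ is near $d/2$, forcing one to spend $L^1$-mass there; that the near part is only barely in $L^{a}$ when $s$ is near $1$, forcing a fixed higher norm $\|u\|_{r}$ to be paid for by interpolation; and that one must check, with the scaling-adapted choice of exponents, that every surviving power of $\|u^{l}\|_1$ is at most $2+O(1/n)$, so that Lemma \ref{iteration} applies.
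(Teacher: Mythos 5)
Your proposal follows essentially the same route as the paper's proof: integrate by parts to expose $\nabla\cdot V_{s,\epsilon}$, split it at radius one into a near part (which lies in $L^a$ for some $a>1$ precisely because $s>1$) and a bounded far part, control the near part via Young's convolution inequality and Gagliardo--Nirenberg, the far part via $\|u\|_1$, and reduce $m\ge 2$ to the $m<2$ case through the truncation $u_1=(u-1)_+$ as in Theorem \ref{propm2}. The paper's version additionally fixes an auxiliary $s'\in(1,s)$ with $m>2-2s'/d$ so the $L^a$-integrability of the near part is strict, and then splits the choice of Young exponents into three sub-cases according to whether $\tfrac{d}{2s'-2}$ is $\ge n$, in $(l,n)$, or $\le l$ — which is the case analysis you anticipate with your ``fixed higher norm $\|u\|_{r}$ interpolated against $\|u\|_{n_k}$'' device.
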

\begin{proof}
The proof is separated into two parts: $m< 2$ and $m\geq 2$. 

\begin{flushleft}
$\circ$ Part one, $m<2$.
\end{flushleft}

For $n\geq 3-m$, denote $l=\frac{n+m-1}{2}\geq 1$. We multiply $u^{n-1}$ on both sides of \eqref{eqn:approx} and obtain
\begin{align}
    \partial_t\int_{\mathbb{R}^d} u^n dx
&\leq-mn\int_{\mathbb{R}^d} u^{m-1}\nabla u \nabla u^{n-1}dx+n{\int_{\mathbb{R}^d}\l  V_{s,\epsilon}*u\r u\nabla u^{n-1}dx}\nonumber\\
&\leq-C_m\int_{\mathbb{R}^d} | \nabla u^{l}|^2dx-(n-1)\underbrace{\int_{\mathbb{R}^d}\l  \nabla \cdot V_{s,\epsilon}*u\r  u^{n}dx}_{X:=}.\label{est VR 1}
\end{align}
Let $\chi(x)=\chi_{|x|\leq 1}(x)$ be the indicator function. Let $A_1:=\chi\nabla \cdot V_{s,\epsilon}$ and $A_2:=(1-\chi)\nabla \cdot V_{s,\epsilon}$. It is not hard to see $A_2$ is bounded and
\begin{itemize}
    \item[] 1. $A_1$ is compactly supported and $|A_1|(z)\leq |z|^{-d-2+2s}$. 
    
    \item[] 2. $|A_1|$ bounded in $L^{\frac{d}{d+2-2s'}}(\mathbb{R}^d)$ for all $1<s'<s$.
\end{itemize}
We fix one $s'$ such that 
\[s'\in (1,s)\quad  \text{ and }\quad m>2-\frac{2s'}{d}.\] 
We have
\begin{align*}X&\leq \int_{\mathbb{R}^d} |A_1|*u\, u^{n}dx+ C\int_{\mathbb{R}^{2d}} |A_2|_\infty {u}(y)u^n(x)dxdy=:X_1+X_2.
\end{align*}
By Young's convolution inequality
\begin{equation}
    \label{X1 pq}
    X_1\leq  \|u^n\|_p\|u\|_q \quad \text{ with }p,q\geq 1,\,\frac{1}{p}+\frac{1}{q}=1+\frac{2s'-2}{d}.
\end{equation}

First consider the case when $\frac{d}{2s'-2}\geq l$. 
We can write
\[\|u^n\|_p=\|u^l\|_{\frac{np}{l}}^\frac{n}{l},\quad \|u\|_q=\|u^l\|_{\frac{q}{l}}^\frac{1}{l}.\]
By Gagliardo-Nirenberg and Young's inequalities,
\begin{equation}\label{X1 p q l}
\|u^l\|_{\frac{np}{l}}\leq C\|\nabla u^l\|_2^\alpha\|u^l\|_1^{1-\alpha},\quad   \|u^l\|_{\frac{q}{l}}\leq C\|\nabla u^l\|_2^\beta\|u^l\|_1^{1-\beta}
\end{equation}
where $\alpha,\beta$ are given by
\begin{equation}
\frac{l}{np}+(\frac{1}{2}+\frac{1}{d})\alpha=1,\quad \frac{l}{q}+(\frac{1}{2}+\frac{1}{d})\beta=1.\label{nlp alpha beta}
\end{equation}

If $\frac{d}{2s'-2}\geq n$, take $\frac{np}{l}=\frac{q}{l}=:r$. Then $\alpha=\beta$. According to \eqref{X1 pq}, $r$ can be computed by
\[\frac{1}{r}(\frac{n+1}{l})=1+\frac{2s'-2}{d}.\]
Then 
\[p=\frac{n+1}{n}/(1+\frac{2s'-2}{d})\]
and so $p\geq 1,q\geq 1$ is satisfied due to the assumption $\frac{d}{2s'-2}\geq n$.
Then by \eqref{X1 pq}, \eqref{X1 p q l}
\begin{equation}
    \label{X1 l n}
    X_1\leq C\|\nabla u^l\|_2^{\alpha\frac{n+1}{l}}\|u^l\|_1^{(1-\alpha)\frac{n+1}{l}}.
\end{equation}

We claim
\[\alpha\in [0,1],\quad \alpha\frac{n+1}{l}< 2.\]
It is only nontrivial to verify the second formula of the claim. Let us compute
\begin{align*}
    2-\alpha\frac{n+1}{l}&=2-\frac{n+1}{l}(1-\frac{l}{np})/(\frac{1}{2}+\frac{1}{d})\\
    &=(\frac{1}{2}+\frac{1}{d})^{-1}( 1+\frac{2}{d}-\frac{n+1}{l}+\frac{n+1}{np})\\
    &=(\frac{1}{2}+\frac{1}{d})^{-1}( 2-\frac{n+1}{l}+\frac{2s'}{d})\\
    &\geq (\frac{1}{2}+\frac{1}{d})^{-1}( m-2-\frac{2s'}{d})
\end{align*}
with equality holds when $n=3-m$. We get that$\alpha\frac{n+1}{l}< 2$ is exactly equivalent to
$m> 2-\frac{2s'}{d}.$
Also since the sum of the exponents in \eqref{X1 l n} equals $\frac{n+1}{l}\sim 2+\frac{c}{n}$ for some universal $c>0$, 
we obtain
\[X_1\leq C\|\nabla u^l\|_2^{2-\epsilon}\|u^l\|_1^{\epsilon+\frac{c}{n}}\quad \text{ for some universal }\epsilon>0 \text{ independent of }n.\]
As done several times before, by Young's inequality we derive that
\begin{equation}\label{X1 part 0}X_1\leq Cn^c\|u^l\|_1^{2+\frac{c}{n}}+\frac{\delta}{n}\|\nabla u^l\|^2_2\quad \text{ for some universal constants }C,c.\end{equation}

If $\frac{d}{2s'-2}\in (l,n)$, take $q=\frac{d}{2s'-2}$, $p=1$ and $\alpha,\beta$ satisfying \eqref{nlp alpha beta}. Since \[1>\frac{l}{np},\frac{l}{q}>\frac{1}{2},\]
it is immediately to check that $\alpha,\beta\in (0,1)$. 
Then by \eqref{X1 pq}, \eqref{X1 p q l}
\[X_1\leq C\|\nabla u^l\|_2^{\alpha\frac{n}{l}+\beta\frac{1}{l}}\|u^l\|_1^{(1-\alpha)\frac{n}{l}+(1-\beta)\frac{1}{l}}.\]
We claim that
\begin{equation}\label{less 2}\alpha\frac{n}{l}+\beta\frac{1}{l}<2\quad \text{ is equivalent to
}\quad m>2-\frac{2s'}{d}.\end{equation}
Actually $2-\alpha\frac{n}{l}+\beta\frac{1}{l}$ is away from $0$ independent of $n$.
We omit the proof which is a direct computation. Also since \[\alpha\frac{n}{l}+\beta\frac{1}{l}+(1-\alpha)\frac{n}{l}+(1-\beta)\frac{1}{l}=\frac{n+1}{l}\sim 2+\frac{c}{n}\]
for some universal $c>0$,
by H\"{o}lder's inequality in this case, again we have
\begin{equation}
    \label{eqn X1 part1}X_1\leq Cn^c\|u^l\|_1^{2+\frac{c}{n}}+\frac{\delta}{n}\|\nabla u^l\|^2_2.
\end{equation}

Secondly suppose $\frac{d}{2s'-2}\leq l$. We take $p=1$, $q=\frac{d}{2s'-2}\leq l$ in \eqref{X1 pq}. Then since $\|u\|_1$ is bounded, the set $\{u>1\}$ is of finite measure. Thus by Jensen's inequality
\[\int_{\mathbb{R}^d}u^q dx\leq  \int_{u<1}u dx+\int_{u>1}u^q dx\leq C+C\|u\|_l^q.\]
Thus
\begin{equation}
    X_1=  \|u^n\|_1\|u\|_q\leq C\|u^n\|_1(1+\|u^l\|_1^\frac{1}{l}). \label{X_1 un}
\end{equation} 
By Gagliardo-Nirenberg,
\begin{equation*}
\|u^n\|^\frac{l}{n}_1=\|u^l\|_{\frac{n}{l}}\leq C\|\nabla u^l\|_2^\alpha\|u^l\|_1^{1-\alpha}
\end{equation*}
where $\alpha$ is given by
$\frac{l}{n}+(\frac{1}{2}+\frac{1}{d})\alpha=1.
$ From this we get
\begin{align*}
    X_1&\leq C\|\nabla u^l\|_2^{\alpha\frac{n}{l}}(\|u^l\|_1^{(1-\alpha)\frac{n}{l}}+\|u^l\|_1^{(1-\alpha)(\frac{n}{l}+\frac{1}{l})})\\
    &\leq C\|\nabla u^l\|_2^{\alpha\frac{n}{l}}(1+\|u^l\|_1^{(1-\alpha)(\frac{n}{l}+\frac{1}{l})})\\
   & \leq Cn^c+\frac{\delta}{n}\|\nabla u^l\|^2_2+C\|\nabla u^l\|_2^{\alpha\frac{n}{l}}\|u^l\|_1^{(1-\alpha)(\frac{n}{l}+\frac{1}{l})}.
\end{align*}
In the last inequality we used that
\[\alpha\frac{n}{l}=\frac{n-l}{l}/(\frac{1}{2}+\frac{1}{d})<\frac{d}{d+2}.\]
And we need
$2-\frac{n}{l}\alpha>0$ to be bounded away from $0$ uniformly in $n$. Actually 
\[2-\frac{n}{l}\alpha=(\frac{1}{2}+\frac{1}{d})^{-1}(2+\frac{2}{d}-\frac{n}{l})\geq (\frac{1}{2}+\frac{1}{d})^{-1}\frac{2}{d}.\]
As before by H\"{o}lder's inequality
\begin{equation}
    \label{eqn X1 part2}X_1\leq Cn^c+Cn^c\|u^l\|_1^{2+\frac{c}{n}}+\frac{2\delta}{n}\|\nabla u^l\|^2_2.
\end{equation}

\medskip

As for $X_2$, note
$X_2\leq C\|u^n\|_1$, therefore it can be handled similarly as we bound \eqref{X_1 un}.

\medskip

In all by \eqref{est VR 1}\eqref{X1 part 0}\eqref{eqn X1 part1}\eqref{eqn X1 part2} and taking $\delta$ to be small, we proved for all $n\geq 3-m$
\begin{equation*}
\partial_t\left\|u^n\right\|_1+c\left\|\nabla u^l\right\|^2_2\leq C_\delta n^c+C_{\delta} n^{c}\left\|u^l\right\|^{2+\frac{c}{n}}_1.\end{equation*}
for some universal constants $C,c>0$. As in \eqref{leftbound}, we can bound $\left\|\nabla u^l\right\|^2_2$ from below. Then as before, taking $n_k=2^{k}(2-m)-1+m$ and $A_k=\|u^{n_k}\|_1$, we end the proof by applying Lemma \ref{iteration}.

\medskip

\begin{flushleft}
$\circ$ Part two, $m\geq 2$.
\end{flushleft}
For $n>1$, we multiply $u^{n-1}_1$ on both sides of \eqref{eqn:approx} where $u_1=(u-1)_+$.
We have
\begin{align*}
    \partial_t\int_{\mathbb{R}^d} u_1^n dx=&
n\int_{\mathbb{R}^d} u_1^{n-1}u_tdx\\
&\leq-mn\int_{\mathbb{R}^d} u^{m-1}\nabla u \nabla u_1^{n-1}dx+n\underbrace{\int_{\mathbb{R}^d}\l  V_{s,\epsilon}*u\r u\nabla u_1^{n-1}dx}_{Y:=}.
\end{align*}
Since $m\geq 2$, we have
\begin{equation}\label{est 5.4}
\partial_t\int_{\mathbb{R}^d} u_1^n dx\leq
- C_m \int_{\mathbb{R}^d}\left|\nabla u_1^{\frac{n}{2}}\right|^2+\left|\nabla u_1^{\frac{n+1}{2}}\right|^2dx+nY.
\end{equation}
For $Y$ using the notation $u=u_1+\tilde{u}$, we have
\begin{align}Y&= \frac{n-1}{n}\int_{\mathbb{R}^d}  V_{s,\epsilon}*u \nabla u_1^{n}dx+ \int_{\mathbb{R}^d} V_{s,\epsilon}*{u}\;\nabla u_1^{n-1}dx\nonumber\\
&\lesssim \int_{\mathbb{R}^d} (-\nabla\cdot V_{s,\epsilon})*u\, u_1^{n}dx+\int_{\mathbb{R}^d} (-\nabla\cdot V_{s,\epsilon})*u\, u_1^{n-1}dx\nonumber\\
&\lesssim \int_{\mathbb{R}^d} |\nabla\cdot V_{s,\epsilon}|*u\, u_1^{n}dx+\int_{\mathbb{R}^d} |\nabla\cdot V_{s,\epsilon}|*u\, (u_1^{n}+u_1)dx\nonumber\\
&\lesssim \int_{\mathbb{R}^d} |\nabla\cdot V_{s,\epsilon}|*(u_1+\tilde{u})\, u_1^{n}dx+1 \quad\quad  (\text{ since } \tilde{u}_1\leq 1,\,  \tilde{u}_1(\cdot,t)\in L^1(\mathbb{R}^d))\nonumber\\
&\lesssim \int_{\mathbb{R}^d} |\nabla\cdot V_{s,\epsilon}|*u_1\, u_1^{n}dx+\int_{\mathbb{R}^d} u_1^{n}dx+1. \nonumber
\end{align}
By Young's convolution inequality, the above
\begin{equation}
    \lesssim   \|u_1^n\|_p\|u_1\|_q+\|u_1^n\|_1+1\label{YYX'}
\end{equation}
where
\[p,q\geq 1,\quad\frac{1}{p}+\frac{1}{q}=1+\frac{2s'-2}{d}.\]

For any $\tilde{m}\in (2-\frac{2s}{d},2)$, let $l=\frac{n+\tilde{m}-1}{2}$.
The goal is to show
\begin{equation*}Y\leq C n^c+C n^{c}\left\|u_1^l\right\|^{2+\frac{c}{n}}_1+\frac{\delta}{n} \left\|\nabla u_1^l\right\|^2_2,\end{equation*}
following from which
\[\partial_t\|u_1^{n}\|^2_1 dx+c\left\|\nabla u_1^{\frac{n}{2}}\right\|^2_2+c\left\|\nabla u_1^{\frac{n+1}{2}}\right\|^2_2\leq
Cn^c+Cn^{c}\|u_1^l\|^{2+\frac{c}{n}}_1.\]
Notice
\[\left\|\nabla u_1^l\right\|^2_2\lesssim \left\|\nabla u_1^{\frac{n}{2}}\right\|^2_2+\left\|\nabla u_1^{\frac{n+1}{2}}\right\|^2_2\]
and the fact that
$\|u_1^n\|_1$ can be handled as done in \eqref{X_1 un}, therefore
the rest of the proof follows from the proof of Part one with $m$ replaced by $\tilde{m}$.

\end{proof}

\section{Existence of Solutions}

In this section, we show existence  of weak solutions to \eqref{main}. We are going to take $\epsilon\to 0$ in equation \eqref{eqn:approx}. Let us consider the case when $s\in(\frac{1}{2},\frac{d}{2}),m>2-\frac{2s}{d}$. 
Let $u_{\epsilon}$ solves \eqref{eqn:approx}. By Theorems \ref{propm1}, \ref{propm2}, \ref{thm:large s}, \[\sup_{\epsilon>0,t\geq 0}\|u_\epsilon(\cdot,t)\|_1+\|u_\epsilon(\cdot,t)\|_\infty<\infty.\]
Then $| V_{s,\epsilon}|$  is locally integrable near the origin and so 
\begin{equation}\label{bound: Vsmu}
\left| V_{s,\epsilon}*u_\epsilon(x)\right|\leq C\int_{|x-y|\leq 1}| V_{s,\epsilon}(x-y)|dy+C\int_{|x-y|>1}u_\epsilon(y)dy<\infty
\end{equation}
independent of $\epsilon$. 
The situation is in some sense better.

\medskip

We have the following theorem.

\begin{thm}\label{thmexistence}
Suppose $\{s\in (\frac{1}{2},\frac{d}{2})\,,m>2-\frac{2s}{d}\}$ and $u_0\in L^1(\mathbb{R}^d)\cap L^\infty(\mathbb{R}^d)$ is non-negative. Then there exists a weak solution $u$ to \eqref{main} with initial data $u_0$ and $u$ preserves the mass.
\end{thm}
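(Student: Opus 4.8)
The plan is to obtain the weak solution $u$ to \eqref{main} as a limit of the approximate solutions $u_\epsilon$ solving \eqref{eqn:approx} as $\epsilon\to 0$, using the uniform a priori bounds of Theorems \ref{propm1}, \ref{propm2}, \ref{thm:large s} together with a compactness argument. First I would record the uniform bounds: $\sup_{\epsilon,t}(\|u_\epsilon(\cdot,t)\|_1+\|u_\epsilon(\cdot,t)\|_\infty)\le M$, mass conservation $\int u_\epsilon(\cdot,t)=\int u_0$, and the standard energy estimate obtained by testing \eqref{eqn:approx} with $u_\epsilon^{m-1}$ (or $(m/(m+1))u_\epsilon^m$ as a test-like multiplier): after integrating in time on $[0,T]$ and using \eqref{bound: Vsmu} to absorb the drift term via Young's inequality, this yields $\|u_\epsilon^m\|_{L^2(0,T;\dot H^1)}\le C(T)$ uniformly in $\epsilon$, equivalently $\nabla u_\epsilon^{(m+1)/2}\in L^2(\R^d\times[0,T])$ uniformly. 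Since $\{|V_{s,\epsilon}|\chi_{B_1}\}$ is bounded in $L^1$ uniformly in $\epsilon$ (because $s>1/2$ makes $|x|^{-d-1+2s}$ locally integrable) and $V_{s,\epsilon}\to \nabla K_s$ pointwise with $|V_{s,\epsilon}|\le C|x|^{-d-1+2s}$, dominated convergence type arguments will let us pass to the limit in the convolution.

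Next I would establish strong compactness. From the $L^\infty$ bound and the $L^2_tH^1_x$ bound on $u_\epsilon^m$ (hence on $u_\epsilon^{(m+1)/2}$, and then on $u_\epsilon$ itself locally via the degenerate-diffusion estimate — note $\nabla u_\epsilon = \tfrac{2}{m+1}u_\epsilon^{(1-m)/2}\nabla u_\epsilon^{(m+1)/2}$ and $u_\epsilon$ is bounded), I would get $u_\epsilon$ bounded in $L^2(0,T;H^1_{loc})$. The time derivative $\partial_t u_\epsilon = \epsilon\Delta u_\epsilon+\Delta u_\epsilon^m-\nabla\cdot(u_\epsilon V_{s,\epsilon}*u_\epsilon)$ is bounded in $L^2(0,T;H^{-1}_{loc})$ uniformly (each term is a divergence of an $L^2_{loc}$ or $L^\infty$ vector field — here the uniform bound \eqref{bound: Vsmu} on $V_{s,\epsilon}*u_\epsilon$ is what controls the drift, and $\epsilon\nabla u_\epsilon\to 0$ in $L^2$). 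An Aubin--Lions--Simon argument then gives, along a subsequence, $u_\epsilon\to u$ strongly in $L^2_{loc}(\R^d\times[0,T])$ and a.e.; combined with the uniform $L^\infty$ bound and interpolation, $u_\epsilon\to u$ strongly in $L^p_{loc}$ for all $p<\infty$, $u\in L^\infty$, and weakly $u_\epsilon^m\rightharpoonup u^m$ in $L^2(0,T;\dot H^1)$, $\nabla u_\epsilon^m\rightharpoonup\nabla u^m$. A diagonal argument over $T\to\infty$ and an exhaustion of $\R^d$ by balls promotes this to all of $[0,\infty)$. For the time-continuity $u\in C([0,T];L^1)$ one uses the weak formulation plus the $L^1$ bound and a tightness argument (mass conservation prevents escape of mass to infinity: a standard moment estimate $\frac{d}{dt}\int u_\epsilon\varphi_R$ with $\varphi_R$ a cutoff shows no mass is lost in the limit, giving $\int u(\cdot,t)=\int u_0$).

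Then I would pass to the limit in \eqref{defofmain:test}: for fixed $\phi\in C^\infty_c(\R^d\times[0,T))$ the terms $\iint u_\epsilon\phi_t$, $\int u_0\phi(0,\cdot)$, and $\iint\nabla u_\epsilon^m\cdot\nabla\phi$ converge by the above, while for the drift term $\iint u_\epsilon(V_{s,\epsilon}*u_\epsilon)\cdot\nabla\phi$ I would split $V_{s,\epsilon}*u_\epsilon = (V_{s,\epsilon}\chi_{B_1})*u_\epsilon + (V_{s,\epsilon}\chi_{B_1^c})*u_\epsilon$; on the second piece $V_{s,\epsilon}\chi_{B_1^c}\to\nabla K_s\chi_{B_1^c}$ is bounded and the convolution converges by $L^1_{loc}$ convergence of $u_\epsilon$, while on the first piece $V_{s,\epsilon}\chi_{B_1}\to\nabla K_s\chi_{B_1}$ in $L^1$ (dominated convergence, $|\nabla K_s|\in L^1(B_1)$ as $s>1/2$) so $(V_{s,\epsilon}\chi_{B_1})*u_\epsilon\to (\nabla K_s\chi_{B_1})*u$ uniformly on compact sets, using the uniform $L^\infty$ bound; multiplying by the strongly convergent $u_\epsilon$ against $\nabla\phi\in C^\infty_c$ gives $\iint u(\nabla\mathcal K_s u)\cdot\nabla\phi$. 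This also verifies $u\nabla\mathcal K_s u\in L^1(\R^d\times[0,T])$ via \eqref{bound: Vsmu} passed to the limit. Hence $u$ satisfies Definition \ref{def1.1}.

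The main obstacle I expect is the strong-compactness/passage-to-the-limit in the nonlinear drift term: one needs $u_\epsilon\to u$ strongly enough (not merely weakly) to identify $\lim u_\epsilon(V_{s,\epsilon}*u_\epsilon)$ with $u(\nabla K_s * u)$, and this requires the Aubin--Lions input which in turn rests on a uniform $H^1_{loc}$ bound for $u_\epsilon$ — delicate because the diffusion is degenerate ($m>1$), so $\nabla u_\epsilon$ is controlled only through $u_\epsilon^{(m+1)/2}$ and the $L^\infty$ bound, and near $\{u_\epsilon=0\}$ when $m>... $ some care with the exponent $u_\epsilon^{(1-m)/2}$ is needed (one can instead run Aubin--Lions on $u_\epsilon^{(m+1)/2}$ or on $u_\epsilon^m$ and deduce a.e.\ convergence of $u_\epsilon$ from that of a monotone function of it). The secondary subtlety is tightness/mass conservation on the unbounded domain $\R^d$, handled by the cutoff moment estimate. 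Everything else — convergence of the linear terms, the $\epsilon\Delta u_\epsilon$ term vanishing, and verifying the regularity class \eqref{definitionsol} — is routine given the a priori estimates already proved.
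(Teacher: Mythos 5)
Your overall strategy — regularize, derive uniform a priori bounds, extract a convergent subsequence by an Aubin--Lions type compactness argument, then pass to the limit in the weak formulation, with a cutoff moment estimate for tightness and mass conservation — is precisely the route the paper takes; the paper itself is terse here and simply invokes Theorems 1, 2, 7 of \cite{bedrossian2011}, so your fleshed-out version is faithful to the intended proof. There is, however, one intermediate claim that is wrong as written and that you only partially repair at the end. You assert that from $\nabla u_\epsilon^{(m+1)/2}\in L^2$ and the identity $\nabla u_\epsilon=\tfrac{2}{m+1}u_\epsilon^{(1-m)/2}\nabla u_\epsilon^{(m+1)/2}$ one gets $u_\epsilon$ bounded in $L^2(0,T;H^1_{loc})$ ``since $u_\epsilon$ is bounded.'' But $m>1$ means $(1-m)/2<0$, so $u_\epsilon^{(1-m)/2}$ blows up on $\{u_\epsilon=0\}$; boundedness of $u_\epsilon$ gives nothing here, and in fact there is no uniform $H^1_{loc}$ bound on $u_\epsilon$ itself in this regime (compare Lemma~\ref{lem:priorgrad}, which only produces such a bound under the extra hypothesis $m<2$, and even then states it for $s\le\tfrac12$). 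Your closing caveat correctly identifies the fix: run the compactness argument on $u_\epsilon^m$ (or $u_\epsilon^{(m+1)/2}$), whose gradient \emph{is} controlled in $L^2_{x,t}$, pair it with the uniform bound on $\partial_t u_\epsilon$ in $L^2(0,T;H^{-1}_{loc})$ coming from the divergence structure of the equation and the $L^\infty$ control \eqref{bound: Vsmu} on the drift velocity, and recover strong $L^p_{loc}$ convergence of $u_\epsilon$ by continuity/monotonicity of $v\mapsto v^{1/m}$. That is the step actually needed, and it is what the cited reference does; you should state it as the primary argument rather than as a fallback, and drop the erroneous $H^1_{loc}$ claim. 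With that correction, the limit passage in the drift term via the split $V_{s,\epsilon}=V_{s,\epsilon}\chi_{B_1}+V_{s,\epsilon}\chi_{B_1^c}$ and the tightness argument for mass conservation are both sound and match the paper's companion proof of Theorem~\ref{thm:existence msmaller}.
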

Using the estimate given in section \ref{subsection4.1} and the fact that $\| V_{s,\epsilon}*u_{\epsilon}\|_\infty$, $\|\nabla \mathcal{K}_s*u_{\epsilon}\|_\infty$ are uniformly bounded independent of $\epsilon,t$, the proof is almost the same as the proofs in Theorem 1,2,7 in \cite{bedrossian2011}. We omit the proof. The proof of conservation of mass is similar to those in the proof of Theorem \ref{thm:existence msmaller} given below.

\medskip

Let us focus on the situation when $s\leq \frac{1}{2}$. We need the following a prior estimates.

\begin{lemma}\label{lem:priorgrad}
Suppose $s\in (0,\frac{1}{2}]$, $2>m>2-2s/d$ and $u_0\in L^1(\mathbb{R}^d)\cap L^\infty(\mathbb{R}^d)$ is non-negative. Write $u_\epsilon$ as the solution to \eqref{eqn:approx}. Then for any $T>0$, there exists a constant $C_T$ independent of $\epsilon$ such that
\begin{equation}\label{ineq:0T}
    \left\|\nabla u_\epsilon\right\|_{L^2(\mathbb{R}^d\times[0,T])}+ \left\| V_{s,\epsilon}*u_\epsilon\right\|_{L^2(\mathbb{R}^d\times[0,T])}\leq C_T.
\end{equation}

\end{lemma}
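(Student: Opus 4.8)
The plan is to prove the two bounds in \eqref{ineq:0T} in turn: first the space--time $L^2$ bound on $\nabla u_\epsilon$, then to deduce the bound on $V_{s,\epsilon}*u_\epsilon$ from it by a cancellation argument exploiting the oddness of $V_{s,\epsilon}$.

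\textbf{Step 1 (the gradient bound).} I would run the energy estimate of Theorem \ref{thm:s1/2} at the borderline exponent. In that proof one multiplies \eqref{eqn:approx} by $u_\epsilon^{n-1}$ with $n\ge 3-m$, $l=\tfrac{n+m-1}{2}$, and arrives at \eqref{ineq:l}, i.e. $\partial_t\int u_\epsilon^{n}\,dx+\int|\nabla u_\epsilon^{l}|^2\,dx\le C_\delta n^{c^*}\|u_\epsilon^{l}\|_1^{2+\gamma}$. Taking $n=3-m$ --- admissible precisely because $m<2$ --- forces $l=1$, and since the mass $\|u_\epsilon(\cdot,t)\|_1=\|u_0\|_1$ is conserved, the right-hand side reduces to a fixed constant independent of $\epsilon$ and $t$, so $\partial_t\int u_\epsilon^{3-m}\,dx+\int|\nabla u_\epsilon|^2\,dx\le C$. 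Integrating over $[0,T]$, discarding the nonnegative term $\int u_\epsilon(\cdot,T)^{3-m}$, and using $u_0^{3-m}\le\|u_0\|_\infty^{2-m}u_0\in L^1(\mathbb{R}^d)$ (valid since $1\le 3-m\le 2$) gives $\int_0^T\!\!\int_{\mathbb{R}^d}|\nabla u_\epsilon|^2\,dx\,dt\le CT+\int_{\mathbb{R}^d}u_0^{3-m}\,dx=:C_T$, which is the first half of \eqref{ineq:0T}.

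\textbf{Step 2 (from $\nabla u_\epsilon$ to $V_{s,\epsilon}*u_\epsilon$).} Here I would use that $V_{s,\epsilon}$ is odd (being $\zeta_\epsilon$, radial, times $\nabla_x K_s(\cdot,0)$, odd) and that $|V_{s,\epsilon}(z)|\le C|z|^{-d-1+2s}$. Writing $(V_{s,\epsilon}*u_\epsilon)(x)=\int V_{s,\epsilon}(z)u_\epsilon(x-z)\,dz$ and splitting at $|z|=1$: on $\{|z|\le 1\}$ one has $\int_{\{|z|\le1\}}V_{s,\epsilon}=0$ by oddness, so I may insert $-u_\epsilon(x)$ and estimate, via $|u_\epsilon(x-z)-u_\epsilon(x)|\le|z|\int_0^1|\nabla u_\epsilon(x-\theta z)|\,d\theta$ (recall $u_\epsilon$ is smooth), the near contribution by $C\int_{|z|\le1}|z|^{-d+2s}\int_0^1|\nabla u_\epsilon(x-\theta z)|\,d\theta\,dz$; since $|z|^{-d+2s}$ is integrable over $\{|z|\le 1\}$ when $s>0$, Minkowski's integral inequality bounds its $L^2_x$ norm by $C\|\nabla u_\epsilon(\cdot,t)\|_2$. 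On $\{|z|>1\}$, the truncated kernel $|V_{s,\epsilon}|\chi_{\{|z|>1\}}\le C|z|^{-d-1+2s}\chi_{\{|z|>1\}}$ lies in $L^2(\mathbb{R}^d)$ with a bound independent of $\epsilon$ (because $2(d+1-2s)>d$ for $s\le\tfrac12$), so Young's inequality (Lemma \ref{lem:Young}) with $\tfrac1p+\tfrac1r=\tfrac32$, $p=2$, $r=1$ bounds that contribution in $L^2_x$ by $C\|u_\epsilon(\cdot,t)\|_1=C\|u_0\|_1$. Thus $\|V_{s,\epsilon}*u_\epsilon(\cdot,t)\|_2\le C(\|\nabla u_\epsilon(\cdot,t)\|_2+1)$; squaring, integrating over $[0,T]$, and invoking Step 1 completes the proof.

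I expect the main obstacle to be Step 2 for small $s$: $|V_{s,\epsilon}|$ is neither uniformly locally integrable nor (when $s=\tfrac12$) uniformly $L^1$ away from the origin, so a direct application of Young's inequality to $V_{s,\epsilon}*u_\epsilon$ fails. The resolution is to spend the oddness of $V_{s,\epsilon}$ near the origin, trading one power of $|z|$ --- hence one derivative on $u_\epsilon$ --- to replace $|z|^{-d-1+2s}$ by the locally integrable $|z|^{-d+2s}$; this is exactly the point where the $L^2_tL^2_x$ bound on $\nabla u_\epsilon$ from Step 1 is consumed. In Step 1 the only subtlety is that one uses \eqref{ineq:l} at the borderline exponent $n=3-m$, which is legitimate and is where $m<2$ enters.
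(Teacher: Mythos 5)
Your proof is correct. Step~1 coincides with the paper's argument: integrate the differential inequality \eqref{ineq:l} at $n=3-m$ (so $l=1$), use mass conservation to fix the right-hand side, and absorb the initial data via $u_0^{3-m}\le\|u_0\|_\infty^{2-m}u_0$.

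Step~2 reaches the same conclusion by a genuinely different mechanism than the paper. The paper observes $V_{s,\epsilon}(x)=\nabla\bigl(g(|x|)\bigr)$ for an explicit radial primitive $g$ with $|g(\rho)|\lesssim\rho^{-d+2s}$ and $|g'(\rho)|\lesssim\rho^{-d-1+2s}$, then splits $g=g_s+g_b$ by a cutoff at $|x|\sim 1$ and integrates by parts to write $V_{s,\epsilon}*u_\epsilon=g_s*\nabla u_\epsilon+(\nabla g_b)*u_\epsilon$, finishing with Young's inequality since $g_s\in L^1$ and $\nabla g_b\in L^2$. You instead split the convolution integral at $|z|=1$ and exploit the oddness of $V_{s,\epsilon}$ directly: since $\int_{|z|\le1}V_{s,\epsilon}=0$, you may subtract $u_\epsilon(x)$ and use the fundamental theorem of calculus to trade one power of $|z|$ for $\nabla u_\epsilon$, then apply Minkowski; the far piece is handled exactly as in the paper. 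Both arguments spend one derivative to downgrade the singularity from $|z|^{-d-1+2s}$ to the locally integrable $|z|^{-d+2s}$, which is precisely what makes $\nabla u_\epsilon\in L^2_tL^2_x$ the right currency. Your route avoids constructing the primitive $g$ and its cutoffs and is in that sense a little more elementary and pointwise; the paper's route keeps the convolution kernel intact, which is arguably more systematic if one later wants to replace $K_s$ by a kernel that is a gradient but not exactly odd. Either way the constants involved depend only on $d,s,T,\|u_0\|_1,\|u_0\|_\infty$, as required.
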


\begin{proof}
By \eqref{eqn:approx}, after taking $n=3-m$ in \eqref{ineq:l} and doing integration, we find
\[
\int_{\mathbb{R}^d} u^{3-m}_{\epsilon}dx(T)+\iint_{\mathbb{R}^d\times [0,T]}|\nabla u_{\epsilon}|^2dxdt\leq \int_{\mathbb{R}^d} u_{\epsilon}^{3-m}dx(0)
+C |3-m|^{c}\int_0^T\left\|u_{\epsilon}\right\|_1^{2+\gamma}dt.\]
Since $\|u_\epsilon\|_{3-m}(t)$ is uniformly bounded for all $t\geq 0$,
\begin{equation}\label{ineq: nablau_s}
    \left\|\nabla u_\epsilon\right\|^2_{L^2(\mathbb{R}^d\times[0,T])}\leq C+CT
\end{equation} 
where $C$ only depends on $s,m,d$ and $\|u_{\epsilon}\|_1+\|u_{\epsilon}\|_\infty$.

By \eqref{def V} if writing $\rho=|x|$, we have
\[ V_{s,\epsilon}(x):=c\;\zeta_\epsilon(\rho)\nabla_x\; \rho^{-d+2s} \]
with $c$ only depending on $d,s$.
Then we can find $g(\rho):(0,\infty)\rightarrow \mathbb{R}$ such that
\[g'(\rho)=c(-d+2s)\zeta_\epsilon(\rho)\rho^{-d-1+2s}\;\text{ and }\;g(1)=c. \]
Then we have
\[|g(\rho)|\leq C\rho^{-d+2s},\quad |g'(\rho)|\leq C\rho^{-d-1+2s}\]
for some $C>0$ independent of $\epsilon$. Actually for $\rho\in [2\epsilon,1/\epsilon]$, $g(\rho)=c\rho^{-d+2s}$.

Let $\varphi:[0,\infty)\to [0,1]$ be a smooth bump function that $\varphi(\rho)=1$ for $\rho\leq 1$ and $\varphi(\rho)=0$ for $\rho\geq 2$.
Decompose $g$ as $g=g_s+g_b:=\varphi g+(1-\varphi)g$. Then for some constant $C=C(d,s)>0$ independent of $\epsilon$
\begin{equation}
    \label{est gs}
    \|g_s\|_1\leq c\int_{\epsilon}^2 \rho^{-d+2s}\rho^{d-1}d\rho\leq C,
\end{equation} 
\begin{equation}\label{est gb}
    \begin{aligned}
    \|\nabla g_b\|^2_2
    &\leq C\int_{|x|\geq 1} |(\nabla\varphi)\, g|^2(x)+|\nabla g|^2(x) dx\\
    &\leq C  \int_1^2 (\rho^{-d+2s})^2\rho^{d-1}d\rho+C\int_1^\infty(\rho^{-d-1+2s})^2 \rho^{d-1}d\rho\\
    &\leq C \quad (\text{ since }d\geq 3, s\leq 1  ).
    \end{aligned}
\end{equation}

It is not hard to see
\begin{align*}
    \left\| V_{s,\epsilon}*u_\epsilon\right\|^2_{L^2(\mathbb{R}^d\times[0,T])}&\leq2\left\|g_s(|x|)*\nabla u_\epsilon\right\|^2_{L^2(\mathbb{R}^d\times[0,T])}+2\left\|\nabla g_b(|x|)* u_\epsilon\right\|^2_{L^2(\mathbb{R}^d\times[0,T])}.\\
&=:2X_1+2X_2.
\end{align*}
Using Lemma \ref{lem:Young}, \eqref{ineq: nablau_s} and \eqref{est gs} gives
\begin{align*}
    X_1&=\int_0^T\left\|g_s*\nabla u_\epsilon\right\|^2_2dt\leq \int_0^T\left\|g_s\right\|^2_{1}\left\|\nabla u_\epsilon\right\|^2_2 \;dt\\
    &= \left\|g_s\right\|^2_{1}\left\|\nabla u_\epsilon\right\|^2_{L^2(\mathbb{R}^d\times[0,T])} <\infty.
\end{align*}
For $X_2$, by \eqref{est gb}
\[X_2\leq  \iint_{\mathbb{R}^d\times[0,T])} \int_{\mathbb{R^d}}|\nabla g_b|^2(x-y)u^2_\epsilon (y)dy dxdt\leq C\iint_{\mathbb{R^d}\times[0,T]} u^2_\epsilon (y)dy dt<\infty.\]
In all, we have
\[\left\| V_{s,\epsilon}*u_\epsilon\right\|_{L^2(\mathbb{R}^d\times[0,T])}\leq C\]
where $C$ only depends on $d,s,T$ and $\|u_\epsilon\|_1,\|u_\epsilon\|_\infty$.

\end{proof}

\begin{thm}\label{thm:existence msmaller}
Suppose the conditions in Lemma \ref{lem:priorgrad} are satisfied. Then there exists a weak solution to \eqref{main} with initial data $u_0$. 
\end{thm}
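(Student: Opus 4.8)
The plan is to obtain the weak solution of \eqref{main} as a limit of the approximate solutions $u_\epsilon$ solving \eqref{eqn:approx} as $\epsilon\to 0$, exactly in the spirit of the argument in \cite{bedrossian2011} that is invoked for Theorem \ref{thmexistence}, but now using the extra a priori regularity of Lemma \ref{lem:priorgrad} to handle the delicate drift term. First I would collect the uniform bounds: by Theorem \ref{thm:s1/2} we have $\sup_{\epsilon,t}(\|u_\epsilon(\cdot,t)\|_1+\|u_\epsilon(\cdot,t)\|_\infty)\le C$ and mass conservation $\|u_\epsilon(\cdot,t)\|_1=\|u_0\|_1$; from \eqref{ineq: nablau_s} we have $\|\nabla u_\epsilon\|_{L^2(\R^d\times[0,T])}\le C_T$ (in particular $\nabla u_\epsilon^m$ is bounded in $L^2_{loc}$ since $u_\epsilon$ is bounded); and from Lemma \ref{lem:priorgrad} the crucial bound $\| V_{s,\epsilon}*u_\epsilon\|_{L^2(\R^d\times[0,T])}\le C_T$. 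Using the equation \eqref{eqn:approx} these bounds control $\partial_t u_\epsilon$ in, say, $L^2(0,T;H^{-1}_{loc})$ or a negative Sobolev space, so that by Aubin--Lions (applied on bounded spatial domains together with a tightness estimate coming from mass conservation and a moment bound, or a cutoff argument) $u_\epsilon\to u$ strongly in $L^p_{loc}(\R^d\times[0,T])$ for all $p<\infty$ and pointwise a.e., while $\nabla u_\epsilon^m\rightharpoonup \nabla u^m$ and $ V_{s,\epsilon}*u_\epsilon\rightharpoonup w$ weakly in $L^2_{loc}$.

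Next I would pass to the limit in the weak formulation \eqref{defofmain:test}. The linear term $\epsilon\Delta u_\epsilon$ tends to $0$ in the sense of distributions because $\epsilon\nabla u_\epsilon\to 0$ in $L^2_{loc}$. For the diffusion term, strong $L^2$ convergence of $u_\epsilon$ plus weak convergence of $\nabla u_\epsilon^m$ identifies the limit as $\nabla u^m$ and gives $u^m\in L^2(0,T;\dot H^1)$. The delicate point is the drift term $u_\epsilon( V_{s,\epsilon}*u_\epsilon)$: here I would write it as a product of $u_\epsilon$, which converges strongly in $L^2_{loc}$, and $ V_{s,\epsilon}*u_\epsilon$, which converges weakly in $L^2_{loc}$, so the product converges (in $L^1_{loc}$, against a test function) to $u\,w$. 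It then remains to identify $w=\nabla\mathcal K_s u$. For this I would argue that, since $\nabla u_\epsilon$ is bounded in $L^2_{loc}$, one can rewrite $ V_{s,\epsilon}*u_\epsilon$ using the decomposition $V_{s,\epsilon}=$ (a piece behaving like $\nabla K_s$ near the origin, which after an integration by parts hits $\nabla u_\epsilon$, as in the $g_s$ term of Lemma \ref{lem:priorgrad}) plus (a bounded far-field piece $g_b$ convolved with $u_\epsilon$); along each piece one passes to the limit using strong convergence of $u_\epsilon$ and $\nabla u_\epsilon\rightharpoonup\nabla u$, and recognizes the limit as $\nabla K_s * u$ in the sense of the Cauchy principal value used in Lemma \ref{lem:priorgrad}'s computation. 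This shows $u\nabla\mathcal K_s u\in L^1(\R^d\times[0,T])$ and closes the identification $u\,w=u\,\nabla\mathcal K_s u$.

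Finally I would verify the remaining requirements of Definition \ref{def1.1}: the continuity $u\in C([0,T];L^1(\R^d))$ follows from the uniform-in-$\epsilon$ equicontinuity of $t\mapsto u_\epsilon(\cdot,t)$ in $L^1$ (from the bound on $\partial_t u_\epsilon$ and an approximation/cutoff argument), and mass conservation $\|u(\cdot,t)\|_1=\|u_0\|_1$ passes to the limit from $\|u_\epsilon(\cdot,t)\|_1=\|u_0\|_1$ (one needs to rule out mass escaping to infinity, which is done by the standard second-moment estimate: multiplying \eqref{eqn:approx} by $|x|^2$ and using the bounds already established shows $\int|x|^2 u_\epsilon\,dx$ grows at most controllably on $[0,T]$, giving tightness). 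Attainment of the initial data $u(\cdot,0)=u_0$ in $L^1$ follows from the $C([0,T];L^1)$ bound together with $u_\epsilon(\cdot,0)=u_0$.

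The main obstacle is the identification of the weak limit of the drift term $u_\epsilon( V_{s,\epsilon}*u_\epsilon)$ as $u\nabla\mathcal K_s u$ when $s\le 1/2$, precisely because $\nabla K_s*u$ is not a priori well-defined for $u\in L^1\cap L^\infty$; the whole point of Lemma \ref{lem:priorgrad} and the $L^2_{loc}$-bound on $\nabla u_\epsilon$ (available only for $m<2$) is to give enough compactness to make this passage rigorous, and the careful bookkeeping of the near-field/far-field splitting of $V_{s,\epsilon}$ is where the real work lies. A secondary technical point is the spatial non-compactness of $\R^d$, handled throughout by cutoffs plus the tightness/second-moment estimate.
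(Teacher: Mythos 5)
Your proposal follows the same overall strategy as the paper (uniform bounds from Theorem \ref{thm:s1/2}, $L^2$ bounds on $\nabla u_\epsilon$ and $V_{s,\epsilon}*u_\epsilon$ from Lemma \ref{lem:priorgrad}, compactness, pass to the limit, identify the drift), and you correctly put your finger on the main difficulty: identifying the weak limit of $u_\epsilon(V_{s,\epsilon}*u_\epsilon)$ as $u\,\nabla\mathcal{K}_s u$ when $s\le 1/2$. There are two points where you diverge from the paper, and one of them is a genuine gap.

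First, the drift identification. The paper does not go back to the $g_s/g_b$ near-field/far-field splitting that underlies Lemma \ref{lem:priorgrad}. Instead it uses a duality trick: pair $V_{s,\epsilon}*u_\epsilon-\nabla(-\Delta)^{-s}u$ against a smooth compactly supported test $\xi$, move the convolution onto $\xi$, and estimate
\[
\iint \bigl|V_{s,\epsilon}*\xi-\nabla\cdot(-\Delta)^{-s}\xi\bigr|\,dx\,dt
\]
directly, exploiting the Lipschitz bound on $\xi$ to absorb the $|x-y|^{-d-1+2s}$ singularity on $\{|x-y|\le 2\epsilon\}$, yielding a $O(\epsilon^{2s})$ bound. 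This gives weak-$*$ convergence of $V_{s,\epsilon}*u_\epsilon$ in the sense of distributions, upgraded to weak $L^2$ convergence using the uniform $L^2$ bound. Your route — decompose $V_{s,\epsilon}*u_\epsilon=g_s*\nabla u_\epsilon+\nabla g_b*u_\epsilon$, use $g_s\to g$ strongly in $L^1$, $\nabla u_\epsilon\rightharpoonup\nabla u$ weakly in $L^2$ — can be made to work (Young's inequality turns strong $L^1$ convergence of $g_s$ into strong $L^2$ convergence of $g_s*\xi$ for test functions $\xi$, allowing weak limits to pair), but you left the recombination of the two pieces into $\nabla K_s*u$ as a principal value essentially unaddressed, whereas the paper's duality argument is fully elementary and shorter.

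Second, and this is the actual gap: you propose to establish tightness and mass conservation via the second-moment estimate, ``multiplying \eqref{eqn:approx} by $|x|^2$''. This argument requires $\int|x|^2 u_0\,dx<\infty$, which is \emph{not} implied by the standing hypothesis $u_0\in L^1(\R^d)\cap L^\infty(\R^d)$: for instance $u_0(x)\sim|x|^{-d-1}$ at infinity is admissible but has infinite second moment. The paper instead proves the uniform tightness \eqref{thm 4.3 tight} by testing the equation against an annular cutoff $\varphi_{N,R}$ that vanishes for $|x|\le R$; the resulting boundary term $Y_2=\int_0^t\int u_\epsilon(V_{s,\epsilon}*u_\epsilon)\cdot\nabla\varphi$ is controlled by $\|V_{s,\epsilon}*u_\epsilon\|_{L^2}\|u_\epsilon\nabla\varphi\|_{L^2}\le C_T R^{-1}$ — this is precisely where Lemma \ref{lem:priorgrad}'s $L^2$ bound on $V_{s,\epsilon}*u_\epsilon$ is used for tightness, and the argument needs no moment assumption on $u_0$. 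Replacing your second-moment step by this cutoff argument (which you briefly alluded to as an alternative for compactness, but did not deploy for mass conservation) closes the gap and matches the paper's proof.
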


\begin{proof}
For any small $\epsilon>0$, let $u_\epsilon$ be a solution to \eqref{eqn:approx}. By Theorem \ref{thm:s1/2} there exists a constant $C>0$ independent of $\epsilon$ such that for all $t\geq 0$, \begin{equation}
    \label{thm 4.3 L1}
    \|u_{\epsilon}\|_1(t)+\|u_{\epsilon}\|_\infty(t)\leq C.
\end{equation}

We claim the following uniform tightness of $\{u_\epsilon(\cdot,t)\}_\epsilon$ in $L^1(\mathbb{R}^d)$: for any $T>0$
\begin{equation}
    \label{thm 4.3 tight}\lim_{R\to \infty}\int_{B_{2R}^c} u_{\epsilon}(x,t)dx\to 0
\end{equation}
uniformly in $\epsilon\in (0,1)$ and $t\in [0,T]$.
Take $\varphi=\varphi_{N,R}\in C^\infty_0(\mathbb{R}^d)$ a non-negative cutoff function such that for some $N>>1$ 
\begin{align*}
    &\quad\varphi=0 \text{ when }|x|\leq R, \\ &\quad\varphi=1 \text{ when }NR\geq |x|\geq 2R \,\text{ and }\\
&\varphi\leq 1,\, |\nabla\varphi|\lesssim R^{-1}, |\Delta\varphi|\lesssim R^{-1}.
\end{align*}

 By the equation \eqref{eqn:approx}, for any $t\in (0,T]$
\[\int_{\mathbb{R}^d} u_\epsilon\varphi\, dx (t)=\int_{\mathbb{R}^d} u_\epsilon\varphi\, dx (0)+\underbrace{\int_0^t\int_{\mathbb{R}^d} (\epsilon u_\epsilon+u_\epsilon^m)\Delta \varphi \,dxdt}_{Y_1:=}+\underbrace{\int_0^t\int_{\mathbb{R}^d} (u_\epsilon  V_{s,\epsilon}* u_\epsilon)\nabla\varphi\, dxdt}_{Y_2:=}.\]
Using \eqref{thm 4.3 L1} and $|\Delta\varphi|\lesssim R^{-1}$, $Y_1$ converges to $0$ as $R\to\infty$ uniformly in $\epsilon$ and $t\leq T$. 

Next by H\"{o}lder's inequality and Lemma \ref{lem:priorgrad},
\[ Y_2\leq \left\| V_{s,\epsilon}*u_\epsilon\right\|_2\left\|u_\epsilon\nabla\varphi\right\|_2\leq C_T R^{-1}. \]
Thus independent of $N,\epsilon$ and for all $t\in [0,T]$
\begin{equation*}
   \int_{\mathbb{R}^d}u_{\epsilon}(x,t)\varphi(x) dx\leq \omega(R,T)
\end{equation*}
where $\omega:(\mathbb{R}^{2})^+\to \mathbb{R}^+$ satisfying that 
\[\lim_{R\to\infty}\omega(R,T)=0.\]
If letting $N\to \infty$, we proved \eqref{thm 4.3 tight}.

\medskip

Next by Lemma \ref{lem:priorgrad},
$ \;\left\|\nabla u_\epsilon\right\|_{L^2(\mathbb{R}^d\times[0,T])}\leq C_T.$ This, as well as \eqref{thm 4.3 L1}\eqref{thm 4.3 tight}, implies that $\{u_\epsilon\}_\epsilon$ is precompact in $L^1(0,T,L^1(\mathbb{R}^d))$. The proof follows from the work of \cite{bertozzi2009,bedrossian2011}. Thus by passing $ \epsilon\to 0$ along subsequences, we have $u_\epsilon\to u$ in $L^1(0,T,L^1(\mathbb{R}^d))$.
And we can have
\[u\in L^2(0,T, \dot{H}^1(\mathbb{R}^d)).\]
Then $\nabla(-\Delta)^{-s}u$ is well-defined which is a bounded function in $L^2(\mathbb{R}^d\times [0,T])$. We want to show the weak convergence of $V_{s,\epsilon}* u_\epsilon$ to $\nabla(-\Delta)^{-s}u$.

Let $\xi\in C_0^\infty(\mathbb{R}^d\times [0,T],\mathbb{R}^d)$ be a test function. We have
\[ \iint_{\mathbb{R}^d\times [0,T]} \left( V_{s,\epsilon}*u_\epsilon-\nabla(-\Delta)^{-s}u\right)\xi dxdt=\iint_{\mathbb{R}^d\times [0,T]}  V_{s,\epsilon}*\xi\; u_\epsilon-\nabla\cdot(-\Delta)^{-s}\xi \;u dxdt
\]
\begin{equation}\label{firstterm}\leq C\iint_{\mathbb{R}^d\times [0,T]} \left| V_{s,\epsilon}*\xi\; -\nabla\cdot(-\Delta)^{-s}\xi \right| dxdt+C\iint_{\mathbb{R}^d\times [0,T]} \left|\nabla\cdot(-\Delta)^{-s}\xi \right||u_\epsilon-u| dxdt.
\end{equation}
We used the fact that $u_\epsilon,u$ are uniformly bounded in height. Keep in mind that $u_\epsilon\to u$ in $L^1(\mathbb{R}^d\times [0,T])$.
Then to show the integral converges to $0$ as $\epsilon\to 0$, we only need to estimate the first term of \eqref{firstterm} which is denoted as $X$. Suppose $\max_{t\in [0,T]}\xi(\cdot,t)=0|_{B^c_{R_\xi}}$ for some ${R_\xi}>0$ and then by \eqref{def V},
\begin{align*}
X&\leq C\iiint_{\mathbb{R}^{2d}\times [0,T]} \left|(\zeta_\epsilon(x-y)-1)\nabla_x K_s(x,y)\right|\left| \xi(y,t)-\xi(x,t)
\right| dydxdt\\
&\leq C Lip(\xi)\iiint_{|x-y|\leq 2\epsilon}|x-y|^{-d-1+2s}|x-y| (\chi_{|x|\leq {R_\xi}}+\chi_{|y|\leq {R_\xi}})dxdydt\\
&\leq 2C\,Lip(\xi)\,T \iint_{|x|\leq R_\xi, |z|\leq 2\epsilon}|z|^{-d+2s} dz dx\\
&\leq C'\,  Lip(\xi)\,T \, s^{-1}\,R_\xi^d\,\epsilon^{2s} 
\end{align*}
which converges to $0$ as $\epsilon\to 0$. Then $ V_{s,\epsilon}*u_\epsilon\to \nabla(-\Delta)^{-s}u $ weakly in distribution. 

Again by \eqref{ineq:0T}, we have 
\[\| V_{s,\epsilon}*u_\epsilon\|_{L^2(\mathbb{R}^d\times[0,T])}\leq C_T,\quad\|\nabla (-\Delta)^{-s}u_\epsilon\|_{L^2(\mathbb{R}^d\times[0,T])}\leq C_T\]
So actually we have
\[ V_{s,\epsilon}*u_\epsilon\to \nabla(-\Delta)^{-s}u \text{ weakly in }L^2(\mathbb{R}^d\times [0,T])\]
which gives
\[u_\epsilon  V_{s,\epsilon}*u_\epsilon\to u\nabla(-\Delta)^{-s}u \text{ weakly in }L^1(\mathbb{R}^d\times [0,T]).\]
We proved the existence of weak solutions.

\medskip

Notice \eqref{thm 4.3 tight} and the equation deduce the mass preservation of $u$: for all $t>0$, $\int_{\mathbb{R}^d} u dx=\int_{\mathbb{R}^d} u_0 dx$.
Finally let us mention that the property
$u\in C([0,T], L^1(\mathbb{R}^d))$ follows from \cite{bertozzi2009,bedrossian2011}.

\end{proof}

\section{Uniqueness}\label{secuniq}

In this section, we consider the uniqueness of weak solutions to \eqref{main} in the regime $s>1$. 
In general, the problem is open. 

\begin{thm}\label{uniquenesssingular}
Suppose $s\in(1,\frac{d}{2})$, $m> 2-\frac{2s}{d}$ and let $u_0\in L^\infty(\mathbb{R}^d)\cap L^1(\mathbb{R}^d)$ be nonnegative. Then weak solutions to \eqref{main} with initial data $u_0$ are unique. 
\end{thm}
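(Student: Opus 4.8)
The plan is to run the $\dot H^{-1}$ energy estimate that handles the Newtonian case in \cite{bertozzi2009,bedrossian2011}, exploiting the extra smoothing of $\mathcal K_s$ when $s>1$. Let $u_1,u_2$ be two weak solutions of \eqref{main} with the same datum $u_0$, put $w:=u_1-u_2$ and $\Psi:=(-\Delta)^{-1}w$, so $-\Delta\Psi=w$, $\|\nabla\Psi\|_2=\|w\|_{\dot H^{-1}}$ and $w(\cdot,0)=0$. By the mass conservation from Theorems \ref{thmexistence}--\ref{thm:existence msmaller}, $\int_{\R^d}w(\cdot,t)\,dx=0$; together with $w(\cdot,t)\in L^1(\R^d)\cap L^\infty(\R^d)\subset L^2(\R^d)$ this makes $\|w(\cdot,t)\|_{\dot H^{-1}}$ finite for each $t$ (since $|\xi|^{-2}$ is integrable near the origin for $d\ge3$ and $\widehat w$ is bounded there, while $\widehat w\in L^2$ away from it). The goal is the inequality
\[\frac{d}{dt}\,\|w(\cdot,t)\|_{\dot H^{-1}}^2\ \le\ C\,\|w(\cdot,t)\|_{\dot H^{-1}}^2,\qquad C=C(d,s,m,\|u_0\|_1,\|u_0\|_\infty),\]
which with Gr\"{o}nwall's lemma and $\|w(\cdot,0)\|_{\dot H^{-1}}=0$ forces $w\equiv0$, i.e.\ $u_1\equiv u_2$.

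The first task is to justify the identity
\[\tfrac12\tfrac{d}{dt}\|w\|_{\dot H^{-1}}^2=\big\langle (-\Delta)^{-1}w,\partial_t w\big\rangle=-\int_{\R^d}(u_1-u_2)(u_1^m-u_2^m)\,dx+\int_{\R^d}\nabla\Psi\cdot\big(u_1\nabla\mathcal K_su_1-u_2\nabla\mathcal K_su_2\big)\,dx.\]
As $\Psi$ is not an admissible test function in \eqref{defofmain:test}, this is done by testing against truncated mollifications of $\Psi$ and passing to the limit, using $u_i^m\in L^2_t\dot H^1$, $u_i\nabla\mathcal K_su_i\in L^1$ and the boundedness of $\nabla\mathcal K_su_i$ (valid since $s>1/2$). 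The first term on the right is $\le0$ because $r\mapsto r^m$ is nondecreasing, and is discarded. Writing $u_1\nabla\mathcal K_su_1-u_2\nabla\mathcal K_su_2=u_1\nabla\mathcal K_sw+w\,\nabla\mathcal K_su_2$, it remains to bound
\[A:=\int_{\R^d}u_1\,\nabla\Psi\cdot\nabla\mathcal K_sw\,dx,\qquad B:=\int_{\R^d}w\,\nabla\Psi\cdot\nabla\mathcal K_su_2\,dx\]
by $C\|w\|_{\dot H^{-1}}^2$.

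For $B$ I would substitute $w=-\Delta\Psi$ and integrate by parts twice in $x$, obtaining
\[B=-\tfrac12\int_{\R^d}\big(\Delta\mathcal K_su_2\big)\,|\nabla\Psi|^2\,dx+\int_{\R^d}(\nabla\Psi)^{\top}\big(D^2\mathcal K_su_2\big)\nabla\Psi\,dx.\]
This is where $s>1$ is essential: $\Delta\mathcal K_su_2=-(-\Delta)^{1-s}u_2$ and $D^2\mathcal K_su_2$ are convolutions of $u_2$ with kernels comparable to $|x|^{-d+2(s-1)}$ and $|x|^{-d-2+2s}$, the latter being \emph{locally integrable precisely when $s>1$}; hence $\Delta\mathcal K_su_2,\,D^2\mathcal K_su_2\in L^\infty$ with norms $\lesssim\|u_2\|_1+\|u_2\|_\infty$, uniformly bounded by the a priori estimates of Section \ref{subsection4.1}, so $|B|\lesssim(\|u_2\|_1+\|u_2\|_\infty)\|\nabla\Psi\|_2^2=C\|w\|_{\dot H^{-1}}^2$. (For $s=1$ one only gets $D^2\mathcal K_1u_2\in\mathrm{BMO}$, which is why the Newtonian case treats this term differently.) For $A$, I would use $\mathcal K_s=(-\Delta)^{-1}(-\Delta)^{-(s-1)}$, so $\nabla\mathcal K_sw=(-\Delta)^{-(s-1)}\nabla\Psi$ is a Riesz potential of positive order $2(s-1)$ applied to $\nabla\Psi\in L^2$; by H\"{o}lder's inequality and the Hardy--Littlewood--Sobolev inequality,
\[|A|\ \le\ \|u_1\|_{L^{d/(2(s-1))}}\,\|\nabla\Psi\|_2\,\big\|(-\Delta)^{-(s-1)}\nabla\Psi\big\|_{L^q}\ \lesssim\ \|u_1\|_{L^{d/(2(s-1))}}\,\|\nabla\Psi\|_2^2,\]
where $\tfrac1q=\tfrac12-\tfrac{2(s-1)}d$ makes the three exponents conjugate; since $0<2(s-1)<d$ the exponent $d/(2(s-1))$ lies in $[1,\infty)$ and $\|u_1\|_{L^{d/(2(s-1))}}\le\|u_1\|_1^{1-\theta}\|u_1\|_\infty^{\theta}$ is finite and uniformly bounded, so $|A|\lesssim\|w\|_{\dot H^{-1}}^2$. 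Combining the three bounds gives the differential inequality, and the theorem follows.

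I expect the genuine obstacles to be twofold. First, the rigorous justification of the $\dot H^{-1}$ identity for merely weak solutions — differentiating the energy and testing against $(-\Delta)^{-1}w$ requires a careful mollification/truncation argument in which every term must survive the limit, and this is exactly where the regularity built into Definition \ref{def1.1} is used. Second, the Hardy--Littlewood--Sobolev step for $A$ needs $q\in(2,\infty)$, i.e.\ $s<1+\tfrac d4$ — automatic when $d\le4$ but not for $d\ge5$ with $s$ near $\tfrac d2$; in that borderline range one must replace the plain $L^2\to L^q$ bound by an endpoint estimate on Hölder/$\mathrm{BMO}$ or Besov scales and, where the Sobolev exponents become critical, retain the diffusion dissipation $\int(u_1-u_2)(u_1^m-u_2^m)\,dx$ rather than discarding it, using the subcritical relation $m>2-2s/d$ to absorb the low-frequency part of $A$. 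Making the transport terms close the Gr\"{o}nwall loop \emph{linearly} in $\|w\|_{\dot H^{-1}}^2$ — a sublinear bound would not yield uniqueness — is the technical heart of the argument and is precisely why the result is confined to $s>1$.
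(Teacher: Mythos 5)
Your argument follows the paper's proof quite closely: same $\dot H^{-1}$ framework with $\Delta\phi=u_1-u_2$, same monotonicity trick to discard the diffusion contribution, same splitting of the drift into a $w\,\nabla\mathcal{K}_su_i$ piece and a $u_j\,\nabla\mathcal{K}_sw$ piece, and the same reduction of the first piece (your $B$, the paper's $X_2$) to the local integrability of $D^2K_s\sim|z|^{-d-2+2s}$, which holds precisely when $s>1$; you are slightly more careful here in also recording the $\Delta\mathcal{K}_su$ contribution from the second integration by parts, which the paper's one-line computation omits (harmless, since it is the trace of $D^2\mathcal{K}_su$). The one material difference is in the second piece (your $A$, the paper's $X_3$): the paper writes $\nabla\mathcal{K}_sw=-D^2K_s*\nabla\phi$, splits $D^2K_s$ into a near-field part (in $L^1$ when $s>1$) and a far-field part (bounded), and applies Young's convolution inequality; you instead write $\nabla\mathcal{K}_sw=(-\Delta)^{-(s-1)}\nabla\Psi$ and invoke Hardy--Littlewood--Sobolev. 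On inspection the two routes carry the same hidden side condition $s<1+d/4$: yours explicitly, so that the HLS exponent $q$ is finite; the paper's implicitly, because the bounded far-field kernel $\chi_{|z|\geq1}D^2K_s$ is \emph{not} in $L^1$, and the written step bounding $\|\chi_{|z|\geq1}D^2K_s*\nabla\phi\|_2$ by $\|\nabla\phi\|_2$ in fact requires that kernel to lie in some $L^p$ with $p\le 2$, which again reduces to $s<1+d/4$. You flag this range ($d\ge5$, $s$ near $d/2$) honestly; the paper does not address it. So your proposal reproduces the paper's argument with its strengths and limitations alike, and your sketch of the mollification/truncation needed to justify the $\dot H^{-1}$ energy identity for weak solutions --- which the paper takes as given --- is a sensible account of the missing formality.
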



\begin{proof}(of Theorem \ref{uniquenesssingular})
We will follow the approach of \cite{bertozzi2009,bedrossian2011} and estimate the difference of weak solutions in $\dot{H}^{-1}$.
Suppose $u_1,u_2$ are two weak solutions to \eqref{main} with the same initial data $u_0$. For each $t>0$, define $\phi(\cdot,t)$ through
\[\Delta \phi(x,t)=u_1(x,t)-u_2(x,t)
\quad\text{ and }\lim_{|x|\to \infty}\phi(x,t)=0.\]
Then by the equation \[\frac{1}{2}\frac{d}{dt}\int_{\mathbb{R}^d}|\nabla\phi|^2dx=
\int_{\mathbb{R}^d}(\nabla u_1^m-\nabla u_2^m)\nabla\phi\, dx-\int_{\mathbb{R}^d}(u_1-u_2)(\nabla\mathcal{K}_su_1)\nabla\phi\, dx\]\[-\int_{\mathbb{R}^d}u_2(\nabla \mathcal{K}_s(u_1- u_2))\nabla\phi\, dx=: X_1+X_2+X_3.\]
Direct computations yields \[X_1=-\int_{\mathbb{R}^d}(u_1^m-u_2^m)(u_1-u_2)\leq 0.\]

Because $d+2-2s\in (2,d)$ and $u_1(\cdot,t)$ is bounded in $L^1\cap L^\infty$
\begin{equation*}
\left| D^2K_s *u_1\right|(x)\leq C\int_{|x-y|\leq 1}| x-y|^{-d-2+2s}dy+C\int_{|x-y|>1}u_1(y)dy<\infty.
\end{equation*}
We get
\[X_2=-\int_{\mathbb{R}^d}\Delta\phi\,\nabla\mathcal{K}_su_1\nabla\phi\, dx=\int_{\mathbb{R}^d}\nabla\phi\, D^2\mathcal{K}_su_1\nabla\phi\, dx\leq C \int_{\mathbb{R}^d} |D^2\mathcal{K}_su_1||\nabla\phi|^2dx\leq C\|\nabla\phi\|_2^2.
\]

Let $A_1(z)=\chi_{|z|\geq 1}D^2K_s(z)$ and $A_2(z)=\chi_{|z|<1}D^2K_s(z)$. By $A_1(z)$ is bounded and $|A_2|(z)\in L^1(\mathbb{R}^d)$. Then by Young's convolution inequality,
\begin{align*}
    X_3&=\int_{\mathbb{R}^d}u_2(D^2{K}_s*\nabla\phi)\nabla\phi\, dx\\
    &=\int_{\mathbb{R}^d}u_2(A_1(z)*\nabla\phi)\nabla\phi\, dx+\int_{\mathbb{R}^d}u_2(A_2(z)*\nabla\phi)\nabla\phi\, dx\\
    &\leq C\left\|A_1*\nabla\phi\right\|_2\left\|\nabla\phi\right\|_2+C\left\|A_2\right\|_1\left\|\nabla\phi\right\|_2^2\leq C\left\|\nabla\phi\right\|_2^2.
\end{align*}
Letting $\eta(t)=\|\nabla\phi\|_2^2$, we find
\[\frac{d}{dt}\eta(t)\leq C\eta(t).\]
And $\eta(0)=0$ due to $u_1(x,0)-u_2(x,0)=0$.
By Gronwall's inequality $\eta(t)=0$ for all $t\geq 0$ which concludes the proof of the theorem.

\end{proof}

\section{H\"{o}lder Regularity}

In this section we look at the case when $s>1/2$ and prove Theorem \ref{thm holder}. Let $u$ be a solution to \eqref{main} and denote 
\[V(x,t)=\nabla \mathcal{K}_su (x,t).\]
Then we can rewrite the equation as
\begin{equation}\label{eqn:driftV}
    u_t=\Delta u^m + \nabla\cdot({V}u).
\end{equation}

By Theorems \ref{propm1}, \ref{propm2} and \ref{thm:large s}, in the subcritical regime, $u$ is uniformly bounded in $ L^\infty(\mathbb{R}^d\times [0,\infty))$ and $\|u(\cdot,t)\|_1=\|u_0\|_1<\infty$. Thus 
\begin{equation}\label{bound of V}
    \begin{aligned}
    |\nabla\mathcal{K}_su|(x,t)&\lesssim \int_{\mathbb{R}^d}
|x-y|^{-d-1+2s}u(y,t)dy\\
&\lesssim \int_{|x-y|\leq 1}
|y|^{-d-1+2s}dy+\int_{|x-y|\geq 1}u(y,t)dy \\
&<C \quad (\text{ only depending on }d,s, \|u_0\|_1, \|u\|_\infty).
\end{aligned}
\end{equation}
Therefore ${V}(x,t)$ is uniformly bounded.
Let us consider \eqref{eqn:driftV} and the notion of solutions is the same as Definition \ref{def1.1} after replacing $\mathcal{K}_su $ by $V$.
We give both the interior regularity and the regularity up to $t=0$ results of solutions to \eqref{eqn:driftV}. Here we only need $m>1$.

\begin{thm}\label{tequal0}
Suppose $m>1$ and $V$ is a bounded measurable vector field in $\mathbb{R}^d\times [0,1]$. Let $u$ be a bounded weak solution to \eqref{eqn:driftV} in $B_1\times [0,1]$. Then the following holds
\begin{itemize}
\item[(a)] $u$ is H\"{o}lder continuous in $B_{\frac{1}{2}} (\frac{1}{2},1]$.

\medskip

\item[(b)] If $u(\cdot,0)$ is H\"{o}lder continuous in space, then $u$ is H\"{o}lder continuous in $B_{\frac{1}{2}}\times [0,1]$.
\end{itemize}
\end{thm}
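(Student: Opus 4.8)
The plan is to treat \eqref{eqn:driftV} as a porous-medium equation perturbed by a bounded divergence-form drift and to run a De Giorgi--Nash--Moser type oscillation-decay argument adapted to the degenerate setting, which is precisely what is carried out in \cite{kimpaul}. For part (a), I would first establish local energy (Caccioppoli) estimates for $u$ on shrinking parabolic cylinders: multiply \eqref{eqn:driftV} by $(u-k)_+\varphi^2$ for cutoffs $\varphi$ and truncation levels $k$, use that $\nabla u^m\cdot\nabla (u-k)_+ \gtrsim |\nabla (u-k)_+^{(m+1)/2}|^2$ on $\{u\geq k\}$ (away from the degeneracy, since $u\geq k$ keeps $u^{m-1}$ bounded below), and absorb the drift term $\int V u\,\nabla((u-k)_+\varphi^2)$ using boundedness of $V$ and Young's inequality. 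These yield $L^\infty$-to-$L^2$ gain and, via a De Giorgi iteration on level sets, a local $L^\infty$ bound followed by the key oscillation-reduction lemma: on $Q_{r/2}$ the oscillation of $u$ is a fixed fraction less than on $Q_r$, provided $u$ is bounded away from $0$; where $u$ is near $0$ one uses the standard trick that $u^m$ is then comparable to a nondegenerate quantity and the equation behaves like a uniformly parabolic one, or alternatively that small values already control the oscillation from below. Iterating the oscillation decay over dyadic cylinders inside $B_1\times[0,1]$ gives interior Hölder continuity on $B_{1/2}\times(\tfrac12,1]$.

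For part (b), the only new ingredient is boundary regularity at the initial slice $t=0$. Since $u(\cdot,0)$ is Hölder continuous, I would compare $u$ with the solution of the same equation restricted to a cylinder $Q^0(r,c)$ and use a barrier/comparison argument: construct super- and subsolutions of \eqref{eqn:driftV} near $t=0$ of the form $u(x_0,0)\pm \big(C|x-x_0|^\gamma + C t^{\gamma/2} + C t\big)$, where the linear-in-$t$ term dominates the contributions of the bounded drift and the diffusion acting on the Hölder profile. Because $V$ is merely bounded, such explicit barriers may not be exact super/subsolutions; the cleaner route (again as in \cite{kimpaul}) is to run the same De Giorgi oscillation iteration but in the half-cylinders $Q^0(r,c)$ anchored at $t=0$, using the initial Hölder modulus of continuity as the ``previous step'' input that seeds the iteration, so that the oscillation on $Q^0(r/2,c)$ decays geometrically down to $t=0$. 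This produces a joint space-time Hölder modulus up to and including $t=0$, hence continuity on $B_{1/2}\times[0,1]$.

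The main obstacle is handling the degeneracy of the diffusion where $u\to 0$ together with the low regularity of the drift. Away from $\{u=0\}$ the equation is uniformly parabolic and the classical machinery applies verbatim, but near $\{u=0\}$ one cannot directly lower-bound $u^{m-1}$; the standard resolution is to split into the two regimes ``$\osc u$ is small compared to $\inf u$'' (nondegenerate, classical iteration) and ``$\inf u$ is small compared to $\osc u$'' (then $\sup u$ already controls the oscillation and one iterates using the intrinsic scaling $t\sim u^{m-1}r^2$ of the porous medium equation, i.e. working in cylinders $Q(r,\omega^{1-m})$ with $\omega$ the current oscillation). Making this intrinsic-scaling alternative interact correctly with the bounded drift term — checking that the rescaled drift remains bounded by a universal constant under the chosen scaling, which is exactly the scaling-invariance point flagged in the introduction — is the delicate step, and it is where I would invoke the results of \cite{kimpaul} rather than redo the iteration from scratch.
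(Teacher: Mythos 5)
Your proposal is correct and follows essentially the same route as the paper: part (a) is delegated to Theorem 4.1 of \cite{kimpaul}, and part (b) runs the oscillation-decay iteration in intrinsically scaled half-cylinders $Q^0(r,w^{-\alpha})$ anchored at $t=0$, splitting into a nondegenerate alternative (where the equation is uniformly parabolic and classical theory finishes) and a degenerate alternative (where a De Giorgi level-set argument shrinks the oscillation), with the Hölder modulus of $u_0$ used to control $v$ on the initial slice once $r\lesssim w^{c_2}$ — which is exactly the step that replaces the interior Lemma 4.9 of \cite{kimpaul}. The one detail the paper makes explicit that you leave implicit is the final patching step: having obtained \eqref{holder} at $t=0$ and the interior modulus from part (a), one still needs a short intrinsic-rescaling argument (comparing the distance from $(x_0,t_0)$ to $\{t=0\}$ with $|(x_0,t_0),(y,s)|$ and applying the interior estimate to $v(x,t)=r^{-1/(m-1)}u(x_0+rx,t_0+rt)$) to produce a single Hölder exponent $\min\{1/(m-1),\tilde\alpha\}$ valid on all of $B_{1/2}\times[0,1]$.
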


\begin{proof}

Part (a) follows from  Theorem 4.1 \cite{kimpaul}. We prove part (b) below.

\medskip

Let $\alpha=\frac{m-1}{m}$ and for some $r,w>0$ recall \eqref{Qrc}
\[Q^0(r,w^{-\alpha}):=\left\{(x,t),|x|\leq r, t\in [0,w^{-\alpha}r^2]\right\}.\]
For simplicity we write $Q^0:=Q^0(1,1)$.  Let  $v:=u^m$ which then solves
\[(v^\frac{1}{m})_t=\Delta v + \nabla\cdot({V}v^\frac{1}{m})\]
with initial data $v_0(x):=u^m(x,0)$.
Denote $M$ as the supremum of $v$ in $Q^0$. 

Fix any point $x\in B_\frac{1}{2}$ and without loss of generality, we can assume $x=0$. 
The first goal is to obtain
\begin{equation}\label{holder}
\eta^kM \geq {\rm osc}_{Q^0(a^kr, b^{2k})} v \hbox{ for all integers } k,
\end{equation}
where $ a,b,\eta\in (0,1)$ only depends on $M, m ,d$, $\|V\|_{L^\infty(Q^0)}$, $\gamma$ and $\gamma$-H\"{o}lder norm of $v_0 $, which will be called as universal constants from now on and within this section.

\medskip

We need two lemmas which regard oscillation reduction. The first one implies that under a suitable assumption the solution is bounded away from $0$ with certain amount. The other shows that if the assumption is not satisfied, then the supremum of the solution decreases once we look at a smaller parabolic neighborhood of $0$.

\medskip

Take $w=M$.
We start with some $Q^0(r,w^{-\alpha})$ with $0<r\leq \frac{1}{2}$ such
 that 
\begin{equation}\label{improvement1}
Q^0(r,w^{-\alpha})\subset Q^0_\frac{1}{2}\;\text{ and by definition }w \geq osc_{Q^0(r,w^{-\alpha})}v.
\end{equation}
Denote
\[ M^-=\inf\left\{v, (x,t)\in Q^0(r,w^{-\alpha})\right\},\;M^+=\sup\left\{v, (x,t)\in Q^0(r,w^{-\alpha})\right\}.\]

\begin{flushleft}
\textit{Claim 1:} 
Suppose \eqref{improvement1}, $v(0,0)\geq M^-+{w}/{4}$ and
\begin{equation}\label{improvement}
w \geq osc_{Q^0(r,w^{-\alpha})}v\;\text{ and }\;M^-\leq \frac{w}{4}.
\end{equation}  
Then there exist universal constants $c_1,c_2\in(0,1), l\geq 1$ such that the following holds: if $0<r<c_1w^{c_2}$, then
\[v|_{Q^0(\frac{r}{2},{w}^{-\alpha})}\geq M^- + \frac{w}{2^l}.\]\end{flushleft}

\begin{flushleft}
\textit{Claim 2:}
Suppose $v(0,0)\leq M^-+w/4$ and \eqref{improvement1}\eqref{improvement} hold.
Then there exist universal constants $c_1,c_2, \eta\in (0,1)$ such that the following is true: 
if $0<r< c_1w^{c_2}$, then
\[v|_{Q^0\l\frac{r}{2}, w^{-\alpha}\r}\leq M^-+\eta w.\]\end{flushleft}

The proofs of \textit{Claim 1} and \textit{Claim 2} are similar. The proof of \textit{Claim 1} is parallel to Proposition 4.4 in \cite{kimpaul} where the interior H\"{o}lder continuity property of \eqref{eqn:driftV} is proved while \textit{Claim 2} parallel to Proposition 4.6 \cite{kimpaul}. We also refer readers to Section 3.11 in \cite{dibenedettobook} where continuity of solutions up to time $0$ is proved when $V=0$.

\medskip

Let us only outline the proof of \textit{Claim 2}: 
If $M^+-M^-\leq 3w/4$, then there is nothing to prove since we can simply take $\eta=\frac{3}{4}$. Then we assume $M^+\geq M^-+3w/4$.

Since $v_0$ is H\"{o}lder continuous, we have \[osc_{\mathbb{R}^d}v_0 \leq r^\gamma\text{ for some } \gamma>0.\] 
By selecting $c_1,c_2$ appropriately and requiring $r\leq c_1 w^{c_2}$, and using $v(0,0)\leq M^-+w/4$, we obtain \[\sup |v(\cdot,0)|_{\mathbb{R}^d}\leq M^-+w/2\leq M^+-w/4.\]
This is different from the interior estimates in \cite{kimpaul} which mainly takes the place of Lemma 4.9 \cite{kimpaul}.

Next by proceeding as in Lemma 4.10 \cite{kimpaul}, we can show the following: 
Assume \eqref{improvement} is satisfied and $r<c_1w^{c_2}$ for some universal constants $c_1,c_2>0$. There exists a universal constant $e>0$ such that if for any fixed $l_0\geq 1$ we have
\begin{equation}\label{condition}\left|\left\{(x,t)\in Q^0(r,w^{-\alpha}),v\geq M^+-2^{-l_0}{w}\right\}\right|\leq e\;|Q^0(r,w^{-\alpha})|,\end{equation}
then
\[v|_{Q^0\l\frac{r}{2}, w^{-\alpha}\r}\leq M^+-2^{-l_0-1} w.\]

Since the choice of $e$ is independent of $l_0,w$, we fix it and try to find $l_0$ which only depends on $e$ and universal constants such that the condition \eqref{condition} is satisfied. This is done similarly as in Lemma 4.9 \cite{kimpaul}.

By the assumption \eqref{improvement1}, $M^+\leq M^-+w$. Therefore
\[v|_{Q^0(\frac{r}{2},w^{-\alpha})}\leq M^+-2^{-l_0-1}{w} \leq M^-+(1-2^{-l_0-1})w.\]
We finished the proof of the claim. $\Box $
\medskip

Now we go back to the proof of the Theorem. We refer readers to the proof Theorem 4.1 \cite{kimpaul}.

 Let us start with a given pair of $(r_0,w_0)=(r,w)$. Below we will generate a sequence of pairs $(r_n, w_n)$ that satisfies \eqref{improvement1}. For each $n$ and the given pair $(r_n, w_n)$ let us denote
$$
M^-_n:=\inf_{Q^0(r_n,w_n^{-\alpha})}v, \quad M^+_n:=\sup_{Q^0(r_n,w_n^{-\alpha})} v.
$$
 
Let $c_1$ and $c_2$ be as given in the claims. For each given pair $(r_n, w_n)$ the next pair $(r_{n+1}, w_{n+1})$ is generated depending on the following cases.

\begin{itemize}
\item[Case 1:] if $r_n> c_1w_n^{c_2}$, the situation is in some sense better since the oscillation is under control. In order to apply the preceding scheme, let $w_{n+1}=w_n, r_{n+1}=\frac{1}{2}r_n$, and we repeat until it falls into Case 2 or 3.

\item[Case 2:] if $r_n\leq c_1 w_n^{c_2}$ and either $M^-_n \geq \frac{w_n}{4}$ or  $v(0,0)\geq M_n^-+{w}/{4}$, we claim $v\in [w_n/4, M_n^+]$ in $Q^0(\frac{3r_n}{4},w_n^{-\alpha})$. This is trivial if $M_n^-\geq \frac{w_n}{4}$, otherwise we use \textit{Claim 1}.  Then from classical regularity theory for parabolic equations, it follows that \eqref{holder} holds for $k\geq n$. 

\medskip

\item[Case 3:] We are left with the case $r_n\leq c_1w_n^{c_2}$, $M^-_n < \frac{w_n}{4}$ and $v(0,0)< M_n^-+{w}/{4}$. In this case
 \textit{Claim 2} yields constant $0<\eta<1$ which are independent of $w$ such that
\begin{equation}\label{etaiteration}osc_{Q^0(\frac{r_n}{2}, w_n^{-\alpha})}v\leq \eta w_n.
\end{equation}
We choose \[w_{n+1}:=\eta w_n, \quad r_{n+1}:=c_3 r_n.\]
Here $c_3^2:=\frac{1}{4}\eta^{{\alpha}}$ is chosen such that $Q^0(r_{n+1},w_{n+1}^{-\alpha})\subset Q^0(\frac{r_n}{2},w_n^{-\alpha})$. 
From this choice of $c_3$ and \eqref{etaiteration} it follows that  \eqref{improvement1} holds for $(r_{n+1}, w_{n+1})$. 

Suppose Case 3 is iterated for $n$ times. Then inside $\{|x|<c_3^nr, t\in(-w^{-\alpha} 2^{-2n+1}r^2,0)\}$, the oscillation of $v$ is bounded by $\eta^n w$. This yields \eqref{holder} for $k=n$.

\end{itemize}

\medskip

Recall the notation \eqref{distance in d plus one}. By \eqref{holder}, there is a H\"{o}lder modulus of continuity $\rho$ i.e. 
\[\rho:\mathbb{R}^+\to\mathbb{R}^+,\,\rho(r)\leq \tilde{C}r^{\tilde{\alpha}}\]
such that for any $x,y\in\mathbb{R}^d$, $t\in\mathbb{R}^+$
\[|u(x,t)-u(y,0)|\leq \rho(|(x,0),(y,t)|).\]
The H\"{o}lder norm only depends on universal constants.

By part (a), there is a H\"{o}lder modulus of continuity depending only on universal constants, without loss of generality suppose it is $\rho$ again, that if $u$ solves the equation in $B_1\times [0,1]$ then $u$ is $\rho$-modulus continuous in $B_{\frac{1}{2}}(\frac{1}{2},1]$.

Now take any $(x_0,t_0),(y,s)\in B_{\frac{1}{2}}[0,1]$. Suppose $t\geq s$ and $|(x_0,t_0),(x_0,0)|=2r$. 
If $ |(x_0,t_0),(y,s)|\geq r/2$, then
\[|(y,s),(x_0,0)|\leq  \frac{5}{4}|(x_0,t_0),(x_0,0)|=\frac{5}{2}r.\]
Thus
\begin{align*}
    |u(x_0,t_0)-u(y,s)|&\leq |u(x_0,t_0)-u(x,0)|+|u(y,s)-u(x_0,0)|\\
    &\leq \rho(2r)+\rho(\frac{5}{2}r)\leq C r^{\tilde{\alpha}}.
\end{align*}

If $ |(x_0,t_0),(y,s)|\leq r/2$, define
$v(x,t)=r^{-\frac{1}{m-1}}u(x_0+rx,t_0+rt)$. Then if assuming
$v(y',s')=r^{-\frac{1}{m-1}}u(y,s)$, $(y',s')\in B_{\frac{1}{2}}\times [-\frac{1}{2},0]$.
We have $v$ solves
\[\partial_t v=\Delta v^m+\nabla\cdot(\tilde{V}v) \text{ in }B_1\times [-1,0]\]
with $\tilde{V}(x):=V(x_0+rx,t_0+rt)$ which has the same $L^\infty$ bound as $V$ does. So by the interior estimate
\[|v(y',s')-v(0,0)|\leq \rho(|(y',s')|).\]
Therefore
\[|u(x_0,t_0)-u(y,s)|\leq r^{\frac{1}{m-1}}\rho\left(\frac{|(x_0,t_0),(y,s)|}{r}\right). \]
This illustrates that the solution is H\"{o}lder continuous in $ B_{\frac{1}{2}}[0,1]$ with H\"{o}lder exponent $\min\{\frac{1}{m-1},\tilde{\alpha}\}$.

\end{proof}

Recall \eqref{bound of V}, as a corollary of Theorem~\ref{tequal0}, we established Theorem~\ref{thm holder}.

\appendix

\section{Proof of Lemma \ref{gagliardo}}

For $q>1$, the result is covered by Corollary 1.5 \cite{BaoW}. We only need to consider the case when $q=1$ and $s\ne 0$. Fix $s,\alpha,r,p$ that $\alpha>s$ and \eqref{condition1}, \eqref{condition3} are satisfied. 
Notice \eqref{condition1} is equivalent to
\[
    \frac{1}{p}=\frac{s}{d}-(1+\frac{1}{d}-\frac{1}{r})\alpha+1.\]
Thus we can take $\alpha',q'$ such that 
\[s\leq \alpha'<\alpha,\, 1<q'<r,\]
\begin{equation}\label{app:q'p}
    \frac{1}{p}=\frac{s}{d}+(\frac{1}{r}-\frac{1}{d})\alpha'+\frac{1-\alpha'}{q'}.
\end{equation}
By Corollary 1.5 \cite{BaoW}
\begin{equation}\label{app:inq1}
    \left\||\nabla|^s u\right\|_p\leq C\left\|\nabla u\right\|_r^{\alpha'}\left\|u\right\|_{q'}^{1-\alpha'}.
\end{equation}
By the classical Gagliardo-Nirenberg inequality
\begin{equation}\label{app:inq2}\left\| u\right\|_{q'}\leq C\left\|\nabla u\right\|_r^{\beta}\left\|u\right\|_{1}^{1-\beta}\end{equation}
where $\beta$ satisfies
\[\frac{1}{q'}=\left(\frac{1}{r}-\frac{1}{d}\right)\beta+{1-\beta}.\]
Since $q'<r$, \[\beta=(1-\frac{1}{q'})/(1+\frac{1}{d}-\frac{1}{r})\in (0,1).\]

By \eqref{app:q'p} and simple calculations
\begin{align*}
    \frac{1}{p}&=\frac{s}{d}+\left(\frac{1}{r}-\frac{1}{d}\right)\alpha'+\left({1-\alpha'}\right)\left(\left(\frac{1}{r}-\frac{1}{d}\right)\beta+{1-\beta}\right)\\
    &=\frac{s}{d}+\left(\frac{1}{r}-\frac{1}{d}\right)\left(\alpha'+\beta-\alpha'\beta\right)+\left({1-\alpha'}\right)(1-\beta).
    \end{align*}
Comparing this with \eqref{condition1}, we obtain
\[\alpha=\alpha'+\beta-\alpha'\beta.\]

Finally plugging \eqref{app:inq2} into \eqref{app:inq1} gives
\[\left\||\nabla|^s u\right\|_p\leq C\left\|\nabla u\right\|_r^{\alpha}\left\|u\right\|_{1}^{1-\alpha}.\]

\medskip


\end{document}